\documentclass[12pt]{article}

\usepackage[margin=1.17in]{geometry}

\usepackage{amssymb,amsmath}
\usepackage{amsthm,enumitem}
\usepackage{hyperref}
\usepackage{mathrsfs}
\usepackage{textcomp}
\hypersetup{
	colorlinks,%
	citecolor=black,%
	filecolor=black,%
	linkcolor=black,%
	urlcolor=black
}
\usepackage[abbrev,nobysame]{amsrefs}
\usepackage{todonotes}

\usepackage{verbatim}

\usepackage{sectsty}

\makeatletter

\newdimen\bibspace
\makeatother

\newtheorem{Theorem}{Theorem}[section]
\newtheorem*{Theorem*}{Theorem A}
\newtheorem{Lemma}[Theorem]{Lemma}
\newtheorem{Proposition}[Theorem]{Proposition}
\newtheorem*{Assumption*}{Assumption (H)}
\newtheorem*{Assumption**}{Assumption (G)}
\newtheorem{Definition}{Definition}[section]

\newtheorem{Corollary}[Theorem]{Corollary}
\newtheorem{Remark}[Theorem]{Remark}

\newcommand{\R}{\mathbb{R}}

\def\d{\operatorname{dist}(x,\partial\Omega)}
\def \({\left(}
\def \){\right)}

\allowdisplaybreaks

\begin{document}
\title{\textbf{Boundary regularity for fractional porous medium-type equations in bounded domains}\bigskip}
	
\author{\medskip  Mengyu Huo,\ \ Tianling Jin\footnote{T. Jin is partially supported by NSFC grant 12122120, and Hong Kong RGC grants GRF 16304125, GRF 16303624  and GRF 16303822.}, \  \
Jingang Xiong\footnote{J. Xiong is partially supported by NSFC grant 12325104.}}

\date{\today}

\maketitle

\begin{abstract}
We prove boundary H\"older regularity for fractional porous medium-type equations in bounded smooth domains with zero exterior data. For $0<\sigma<1$, $n>2\sigma$, and $m>1$, we show that nonnegative weak dual solutions $v$ of $\partial_t v+(-\Delta)^\sigma(v^m)=0$ satisfy
\[
\frac{v}{d^{\sigma/m}}\in
C_{x,t}^{\alpha,\frac{\alpha m}{\sigma(m+1)}}
\]
up to the boundary and away from the initial time, for some $\alpha\in(0,\sigma)$, where $d$ denotes the distance to the boundary. More generally, we establish boundary H\"older estimates for $u/d^\sigma$ for nonnegative viscosity solutions of $\partial_t u=u^\gamma Lu$, with $0<\gamma<1$ and initial data comparable to $d^\sigma$. Here $L$ is a symmetric stable operator of order $2\sigma$ whose measurable angular density is bounded above and below by positive constants. Our proof combines comparison principles and boundary barriers with a blow-up and compactness argument based on a Liouville theorem for ancient solutions in a half-space.

\medskip
\noindent{\it Keywords}:   Fractional porous medium equations; boundary H\"older regularity; Liouville theorems.

\medskip

\noindent{\it MSC (2010)}: Primary 35B65; Secondary 35K65, 35R11.
\end{abstract}

\section{Introduction}
\subsection{The fractional porous medium equation}
Let $\Omega\subset\mathbb R^n$ be a bounded smooth domain, and
\[
0<\sigma<1, \quad n>2\sigma, \quad m>1.
\]
We consider the fractional porous medium equation
\begin{equation}\label{eq:fpme0}
\begin{cases}
\partial_t v=-(-\Delta)^\sigma v^m & \text{in }\Omega\times(0,+\infty),\\
v=0 & \text{in }\Omega^c\times (0,+\infty)\\
v=v_0 & \text{on }\Omega\times\{0\},
\end{cases}
\end{equation}
where $(-\Delta)^\sigma$ is the fractional Laplacian defined by
\[
(-\Delta)^\sigma f(x)=c_{n,\sigma}\,\operatorname{P.V.}\int_{\mathbb R^n}
\frac{f(x)-f(y)}{|x-y|^{n+2\sigma}}\,dy,
\]
with the usual normalization constant $c_{n,\sigma}>0$. 

The known estimates give the boundary behavior $v\asymp d^{\sigma/m}$ at positive times. We prove that the quotient $v^m/d^\sigma$ is H\"older continuous up to the boundary in both space and time. We first recall the notion of weak dual solutions to \eqref{eq:fpme0}. Let $G_\Omega$ be the Green function of $(-\Delta)^\sigma$ in $\Omega$ with zero exterior condition in $\mathbb R^n\setminus\Omega$. The inverse Dirichlet operator is given by
\[
(-\Delta)^{-\sigma}f(x)=\int_\Omega G_\Omega(x,y)f(y)\,dy.
\]
Let $\Phi_1$ be the positive first eigenfunction of $(-\Delta)^\sigma$ with zero exterior data, normalized by $\|\Phi_1\|_{L^2(\Omega)}=1$:
\[
(-\Delta)^\sigma\Phi_1=\lambda_1\Phi_1\quad\text{in }\Omega,
\qquad \Phi_1=0\quad\text{in }\Omega^c,
\qquad \lambda_1>0.
\]
Writing $$d(x)=\operatorname{dist}(x,\partial\Omega),$$ we have $\Phi_1\asymp d^\sigma$; see \cite{BonforteVazquez2016}. Define
\[
L^1_{\Phi_1}(\Omega)=\left\{f:\Omega\to\mathbb R\text{ measurable}:\int_\Omega |f|\Phi_1\,dx<\infty\right\},
\]
with the norm
\[
\|f\|_{L^1_{\Phi_1}(\Omega)}=\int_\Omega |f|\Phi_1\,dx.
\]
The notion of weak dual solution was introduced by Bonforte and V\'azquez in \cite{BonforteVazquez2015} and developed further in \cite{BonforteVazquez2016}. We use its formulation in Definitions~5.1 and~5.2 of \cite{BonforteFigalliRosOton2017}.

\begin{Definition}[Weak dual solution]\label{def:weak-dual}
Let $T>0$. A nonnegative function $v$ in $\Omega$, extended by zero to $\Omega^c$, is a weak dual solution of 
\begin{equation}\label{eq:fpmeT}
\begin{cases}
\partial_t v=-(-\Delta)^\sigma v^m & \text{in }\Omega\times(0,T),\\
v=0 & \text{in }\Omega^c\times (0,T)
\end{cases}
\end{equation}
 if $v\in C\big((0,T);L^1_{\Phi_1}(\Omega)\big)$, $v^m\in L^1\big((0,T);L^1_{\Phi_1}(\Omega)\big)$,
and
\begin{equation}\label{eq:weak-dual}
\int_0^T\!\int_\Omega (-\Delta)^{-\sigma}v\,\partial_t\psi\,dx\,dt
=\int_0^T\!\int_\Omega v^m\psi\,dx\,dt
\end{equation}
for every test function $\psi$ such that $\psi/\Phi_1\in C_c^1\big((0,T);L^\infty(\Omega)\big).$

It solves the Cauchy--Dirichlet problem \eqref{eq:fpmeT} with initial datum $0\leq v_0\in L^1_{\Phi_1}(\Omega)$ if, in addition, $v\in C\big([0,T);L^1_{\Phi_1}(\Omega)\big)$,
and $v(\cdot,0)=v_0\quad\text{in }L^1_{\Phi_1}(\Omega)$.
\end{Definition}

Thus, \eqref{eq:weak-dual} expresses $\partial_t((-\Delta)^{-\sigma}v)=-v^m$ weakly, with the boundary condition encoded in $(-\Delta)^{-\sigma}$. For every nonnegative initial datum $v_0\in L^1_{\Phi_1}(\Omega)$, there exists a unique minimal weak dual solution, obtained by increasing approximation with mild solutions. It belongs to $C([0,\infty);L^1_{\Phi_1}(\Omega))$ and satisfies the comparison principle; see Theorems~4.4 and~4.5 of \cite{BonforteVazquez2016}.

For nontrivial $v_0$, Bonforte, Figalli, and Ros-Oton \cite{BonforteFigalliRosOton2017} established the global Harnack principle for all positive times: setting
\[
t_*=c_*\|v_0\|_{L^1_{\Phi_1}(\Omega)}^{-(m-1)},
\]
one has
\begin{equation}\label{eq:dual-global-harnack}
\underline\kappa\min\left\{1,\frac{t}{t_*}\right\}^{\frac{m}{m-1}}
\frac{d(x)^{\sigma/m}}{t^{1/(m-1)}}
\leq v(x,t)\leq
\overline\kappa\frac{d(x)^{\sigma/m}}{t^{1/(m-1)}},
\qquad x\in\Omega,\ t>0,
\end{equation}
where $c_*,\underline\kappa,\overline\kappa>0$ depend only on $n,\sigma,m,\Omega$. In particular, $v$ is positive in $\Omega$ at every positive time. Theorems~1.4--1.6 of \cite{BonforteFigalliRosOton2017} also give, for every $0<t_0<t_1$, H\"older regularity up to $\partial\Omega$,
\[
v\in C^{\frac{\sigma}{m},\frac{1}{2m}}_{x,t}(\overline\Omega\times[t_0,t_1]),
\]
with optimal spatial exponent $\sigma/m$, and higher interior spatial regularity:
\[
v\in C_x^\infty(\Omega\times(0,\infty)),
\]
\[
|D_x^k v(x,t)|\leq C_k d(x)^{\sigma/m-k},\quad k\geq1,\ t\geq t_0.
\]
Here $C_k$ depends only on $n,\sigma,m,k,\Omega,t_0$ and $\|v_0\|_{L^1_{\Phi_1}(\Omega)}$. Moreover, for every compact $K\Subset\Omega$,
\[
v\in C_t^{1,\alpha}(K\times[t_0,t_1])
\quad\text{for some }\alpha=\alpha(\sigma,m)>0.
\]
Thus, at positive times, these weak dual solutions are classical and positive in the interior and H\"older continuous up to the boundary $\partial\Omega$.

In this paper, we refine the boundary regularity and prove that $v^m/d^\sigma$ is H\"older continuous in $\overline\Omega\times[t_0,+\infty)$.

\begin{Theorem}\label{thm:fpme}
Let $0\leq v_0\in L^1_{\Phi_1}(\Omega)$, and let $v$ be the weak dual solution of \eqref{eq:fpme0} described above. For every $t_0>0$, there exist $\alpha\in(0,\sigma)$ and $C>0$, depending only on $n,\sigma,m,\Omega,t_0$ and $\|v_0\|_{L^1_{\Phi_1}(\Omega)}$, such that, for every $t_1>t_0$,
\[
\left\|\frac{v}{d^{\sigma/m}}\right\|_{C_{x,t}^{\alpha,\frac{\alpha m}{\sigma(m+1)}}(\overline\Omega\times[t_0,t_1])}\leq C.
\]
\end{Theorem}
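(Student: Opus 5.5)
The plan is to deduce Theorem~\ref{thm:fpme} from the more general boundary estimate for $\partial_t u=u^\gamma Lu$ announced in the abstract. Set $u=v^m$ and $\gamma=(m-1)/m\in(0,1)$. Formally,
\[
\partial_t u=m v^{m-1}\partial_t v=-m\,u^{\gamma}(-\Delta)^\sigma u .
\]
After rescaling time by the factor $m$, $u$ solves $\partial_t u=u^\gamma Lu$ with $L=-(-\Delta)^\sigma$. This $L$ is a symmetric stable operator with constant angular density, so it falls within the class considered. The general result then requires three inputs.

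First, $u$ must be a viscosity solution in $\Omega\times(t_0/2,\infty)$ with zero exterior data. This follows from the interior regularity recalled above: $v\in C^\infty_x$ and $v\in C^{1,\alpha}_t$ locally, with $v>0$. Hence $u$ is a classical solution in the interior and is continuous up to $\partial\Omega$, with $u=0$ outside $\Omega$. The weak dual formulation coincides with the pointwise equation at interior points. To see this, note that $\partial_t(-\Delta)^{-\sigma}v=-v^m$ and that the interior smoothness permits applying $(-\Delta)^\sigma$ to both sides.

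Second, the data at the new initial time $t_0/2$ must be comparable to $d^\sigma$. The global Harnack inequality \eqref{eq:dual-global-harnack} gives
\[
c\,d^{\sigma}\le u(\cdot,t)\le C d^\sigma \qquad \text{for } t\in[t_0/2,\infty).
\]
The constant $C$ depends only on $t_0$. The constant $c$ depends on $t_0$, $t_*$, and hence on $\|v_0\|_{L^1_{\Phi_1}}$. This yields the hypothesis on the initial data.

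Third, applying the general theorem produces a uniform $C^{\alpha,\alpha/(2\sigma-\gamma\sigma)}$-type bound for $u/d^\sigma$ on $\overline\Omega\times[t_0,t_1]$. Equivalently, one applies it on unit time windows $[\tau,\tau+1]$, $\tau\ge t_0$, using the Harnack bounds, which are uniform in $\tau$ above (the lower bound degenerates like $t^{-1/(m-1)}$). Handling long times requires a rescaling. The scaling $u_\lambda(x,t)=\lambda^{-1}u(x,\lambda^{\gamma}t)$ preserves the equation. Taking $\lambda\sim\tau^{-m/(m-1)}$ normalizes the size at time $\tau$, so the constants are independent of $t_1$.

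The time exponent comes from the intrinsic parabolic scaling near the boundary. Since $u\sim d^\sigma$, the equation behaves like $\partial_t u\sim d^{\gamma\sigma}\cdot(\text{order }2\sigma)$. A box of size $r$ at the boundary therefore has time scale $r^{2\sigma-\gamma\sigma}=r^{\sigma(m+1)/m}$. This explains the exponent $\alpha m/(\sigma(m+1))$.

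Finally, return from $u/d^\sigma$ to
\[
\frac{v}{d^{\sigma/m}}=\Bigl(\frac{u}{d^\sigma}\Bigr)^{1/m}.
\]
Since $u/d^\sigma$ is bounded below by a positive constant, the map $s\mapsto s^{1/m}$ is smooth on its range. Hence the H\"older bound transfers with the same exponents.

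The main obstacle is the general estimate itself, which is proved later in the paper. Taking it as given, the delicate point here is rigorously verifying the viscosity-solution property up to the boundary from the weak dual notion, including continuity across $\partial\Omega$. Continuity across $\partial\Omega$ is supplied by the $C^{\sigma/m,1/(2m)}$ regularity of Bonforte--Figalli--Ros-Oton.
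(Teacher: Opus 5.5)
Your route is the paper's: pass to $u=v^m$ (the paper keeps $L=-m(-\Delta)^\sigma\in\mathcal L_*$ rather than rescaling time), use \eqref{eq:dual-global-harnack} to get $u(\cdot,a)\asymp a^{-1/\gamma}d^\sigma$ for $a\geq t_0/2$, normalize by the intrinsic scaling, apply Theorem~\ref{mainthm}, and scale back. However, two steps do not work as written.

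\textbf{The scaling formula.} Your $u_\lambda(x,t)=\lambda^{-1}u(x,\lambda^{\gamma}t)$ is not a symmetry of the equation. Indeed, $\partial_tu_\lambda=\lambda^{\gamma-1}u^\gamma Lu$, whereas $u_\lambda^\gamma Lu_\lambda=\lambda^{-1-\gamma}u^\gamma Lu$. The invariant family is $\lambda^{-1}u(x,\lambda^{-\gamma}t)$. With your choice $\lambda=\tau^{-1/\gamma}$ it becomes $\tau^{1/\gamma}u(x,\tau s)$, which is the paper's $U(x,s)=a^{1/\gamma}u(x,a(s+2))$, $s\in[-1,0]$, up to a time shift. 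Consequently, the windows on which Theorem~\ref{mainthm} applies with fixed constants have length comparable to the current time, not unit length. The paper takes $a=2t/3$, which covers $[t,4t/3]$. Unrescaled unit windows would fail because $c_1\sim\tau^{-1/\gamma}\to0$. Scaling back gives $\|u(\cdot,t)/d^\sigma\|_{C^\alpha}\leq Ct^{-1/\gamma}$ and $[u/d^\sigma]_{C^\beta_t([t,4t/3])}\leq Ct^{-1/\gamma-\beta}$, with $\beta=\alpha m/(\sigma(m+1))$. Pairs of times with $t'>4t/3$ are then handled by the sup bound, since $t'-t>t_0/3$.

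\textbf{The return to $v$.} You use that $u/d^\sigma$ is bounded below by a positive constant. By \eqref{eq:dual-global-harnack}, however, it is only $\geq ct^{-1/\gamma}$ for $t\geq t_0$; for the same reason, your Step~2 claim of a uniform lower bound on $[t_0/2,\infty)$ is inaccurate. On $\{s\geq ct^{-1/\gamma}\}$ the derivative of $s\mapsto s^{1/m}$ is of order $t^{(1-1/m)/\gamma}=t$. A merely bounded H\"older norm of $u/d^\sigma$ would therefore give a constant that grows with $t_1$, contradicting the claimed uniformity. What closes the argument is the matching decay just described: the spatial and temporal seminorms of $v/d^{\sigma/m}$ at time $t$ are $O(t^{1-1/\gamma})=O(t^{-1/(m-1)})$ and $O(t^{-1/(m-1)-\beta})$, respectively, which are bounded for $t\geq t_0$. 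Equivalently, compose with $s^{1/m}$ at the rescaled level, where $U/d^\sigma\in[c,C]$ uniformly in $a$, and then scale back. With these two corrections your argument coincides with the paper's.
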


Our proof combines a compactness argument with a Liouville theorem. We work in the viscosity framework to carry out this argument.

\subsection{A class of degenerate nonlocal equations}
Since nontrivial weak dual solutions $v$ are positive and classical in the interior for positive times, the change of variables $u=v^m$ transforms $v_t=-(-\Delta)^\sigma v^m$ into
\begin{equation}\label{eq:tfpme}
u_t=-mu^\gamma(-\Delta)^\sigma u,
\end{equation}
where $\gamma=1-\frac{1}{m}\in(0,1)$. We study the regularity of solutions to equations of this form for a more general class of nonlocal operators.

Fix $T>0$ and $\gamma\in(0,1)$, and let $u_0$ be positive in $\Omega$. We consider the Cauchy--Dirichlet problem
\begin{equation}\label{eq:main-problem}
\begin{cases}
\partial_t u=u^\gamma Lu & \text{in }\Omega\times(-T,0],\\
u=0 & \text{in }\Omega^c\times[-T,0],\\
u=u_0 & \text{on }\Omega\times\{-T\}.
\end{cases}
\end{equation}
Here $L$ is a linear, translation-invariant nonlocal operator of order $2\sigma$, with $\sigma\in(0,1)$, in the class
$\mathcal L_*$ considered by Ros-Oton and Serra \cite{ros2016boundary}. It is given by
\[
Lu(x)=\int_{\mathbb R^n}
\left(\frac{u(x+z)+u(x-z)}{2}-u(x)\right)
\frac{\mu(z/|z|)}{|z|^{n+2\sigma}}\,dz,
\]
where
\[
\mu\in L^\infty(\mathbb S^{n-1}),\qquad
\mu(\theta)=\mu(-\theta),\qquad
0<\lambda\leq\mu\leq\Lambda<+\infty.
\]
When $\mu$ is constant, $L$ is a positive multiple of $-(-\Delta)^\sigma$. 

The class $\mathcal L_0$, introduced by Caffarelli and Silvestre
\cite{luis}, consists of the operators obtained by replacing
$\mu(z/|z|)/|z|^{n+2\sigma}$ in the formula above by an even measurable
kernel $K$ satisfying
\[
\frac{\lambda}{|z|^{n+2\sigma}}\leq K(z)
\leq\frac{\Lambda}{|z|^{n+2\sigma}},\qquad z\neq0.
\]
Thus, $\mathcal L_*\subseteq\mathcal L_0$.

We study the behavior and regularity of $u$ near $\partial\Omega$. The choice of $\mathcal L_*$ reflects the expected boundary behavior: every $L\in\mathcal L_*$ satisfies
\[
L((x_n)_+^\sigma)=0
\qquad\text{in }\{x_n>0\}.
\]
Thus, $(x_n)_+^\sigma$ is a common half-space solution for all operators in $\mathcal L_*$. This property allows the construction of barriers comparable to $d^\sigma$ and motivates the study of the quotient $u/d^\sigma$. Ros-Oton and Serra \cite{ros2016boundary} proved a H\"older estimate for this quotient near a flat boundary for bounded solutions of the extremal inequalities associated with $\mathcal L_*$. They also showed that the larger class $\mathcal L_0$ does not share a common boundary power: the associated extremal equations admit solutions with boundary powers different from $\sigma$. Hence, ellipticity with respect to $\mathcal L_0$ alone does not guarantee the boundary estimate $u\asymp d^\sigma$.

This boundary behavior also arises from the fractional porous medium equation. As mentioned earlier, Bonforte, Figalli, and Ros-Oton \cite{BonforteFigalliRosOton2017} proved that every nontrivial nonnegative weak dual solution $v$ of \eqref{eq:fpme0} satisfies $v(\cdot,t)\asymp d^{\sigma/m}$ for each $t>0$. The transformation $u=v^m$ therefore gives $u(\cdot,t)\asymp d^\sigma$. 

The estimate $u\asymp d^\sigma$ provides upper and lower bounds for $u/d^\sigma$, but does not ensure that the quotient has a limit at the boundary. Likewise, the $C^{\sigma/m}$ estimate for $v$ does not imply continuity of $v/d^{\sigma/m}$. To describe the boundary behavior more precisely, we seek a modulus of continuity for the quotient. For the classical porous medium equation ($\sigma=1$), Jin, Ros-Oton, and Xiong \cite{JinRosOtonXiong2024} proved the H\"older continuity of $u/d$ after a waiting time. This estimate is a key step in their proof of optimal boundary regularity. We seek the corresponding H\"older estimate for $u/d^\sigma$ for the degenerate nonlocal equation \eqref{eq:main-problem}.

We consider nonnegative viscosity solutions of \eqref{eq:main-problem} with initial data $u_0\in C(\overline\Omega)$ satisfying
\begin{equation}\label{u_0}
c_1 \d^\sigma\leq u_0(x)\leq c_2 \d^\sigma, \qquad x\in\overline{\Omega},
\end{equation}
for some constants $c_1,c_2>0$. By \eqref{eq:dual-global-harnack}, this condition holds at each positive time for $u=v^m$ when $v$ is a nontrivial nonnegative weak dual solution of \eqref{eq:fpme0}. In the viscosity framework, we first prove that this two-sided boundary estimate persists in time: there exist constants $C_1,C_2>0$ such that
\[
C_1\d^\sigma\leq u(x,t)\leq C_2\d^\sigma, \qquad (x,t)\in\Omega\times[-T,0].
\]
The constants are uniform on this time interval.

The main obstacle to estimating the modulus of continuity of $u/d^\sigma$ is the degeneracy of the equation at the boundary. Since $u\asymp d^\sigma$, we have $u^\gamma\asymp d^{\sigma\gamma}$ and can write the equation in $\Omega$ as
\[
\partial_t u=d^{\sigma\gamma}a(x,t)Lu, \qquad a(x,t)=\left(\frac{u(x,t)}{d(x)^\sigma}\right)^\gamma,
\]
where $a$ is bounded above and below by positive constants. Although the equation is uniformly parabolic on compact subsets of $\Omega$, the factor $d^{\sigma\gamma}$ vanishes at the boundary. Moreover, H\"older continuity of $a$ depends on that of $u/d^\sigma$ and therefore cannot be assumed in proving the desired boundary estimates. Applying the elliptic estimates recalled above at a fixed time would require control of $u^{-\gamma}\partial_tu$ near the boundary.

The boundary behavior also determines the natural parabolic scaling. At spatial scale $r$, the solution $u$ has size $r^\sigma$, so the coefficient $u^\gamma$ has size $r^{\sigma\gamma}$. Since $L$ has order $2\sigma$, the corresponding time scale is $r^{\sigma(2-\gamma)}$. For $x_0\in\partial\Omega$ and $t_0\in(-T,0]$, the function
\[
u_r(x,t):=r^{-\sigma}u(x_0+rx,\,t_0+r^{\sigma(2-\gamma)}t)
\]
satisfies the same equation on the rescaled domain and time interval. The nonlocal operator still involves the values of $u_r$ outside the unit ball, so the estimates must control these contributions uniformly under rescaling.

Our main result establishes H\"older continuity of $u/d^\sigma$ up to the boundary. Under the assumptions above, we prove that
\[
\frac{u}{d^\sigma}\in C_{x,t}^{\alpha,\frac{\alpha}{\sigma(2-\gamma)}}(\overline{\Omega}\times (-T/2,0])
\]
for some $\alpha\in(0,\sigma)$. Together with the two-sided boundary estimate, this implies that $u/d^\sigma$ extends to a positive H\"older continuous function on $\overline\Omega$ at each such time. More precisely, we prove the following theorem.

\begin{Theorem}\label{mainthm}
Let $\Omega$ be a bounded $C^{1,1}$ domain, $L\in\mathcal L_*$, $T>0$, and $0<\gamma<1$. Let $u:\mathbb R^n\times[-T,0]\to[0,\infty)$ be continuous on $\overline\Omega\times[-T,0]$ and solve \eqref{eq:main-problem} in the viscosity sense. Assume that $u_0$ satisfies \eqref{u_0}. Then there exist $\alpha\in(0,\sigma)$ and $C>0$, depending only on $n,\sigma,\gamma,\lambda,\Lambda,c_1,c_2,T$ and $\Omega$, such that
\[
\left\|\frac{u}{d^\sigma}\right\|_{C_{x,t}^{\alpha,\frac{\alpha}{\sigma(2-\gamma)}}(\overline\Omega\times[-T/2,0])}\leq C.
\]
The quotient on the boundary is understood as its continuous extension from $\Omega$.
\end{Theorem}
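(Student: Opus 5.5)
The plan has three stages: a uniform two-sided barrier bound, a boundary oscillation-decay estimate for $u/d^\sigma$ obtained by blow-up and a Liouville theorem, and a gluing step with interior regularity.

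\textbf{Step 1: uniform barriers.} First I establish $C_1 d^\sigma\le u\le C_2 d^\sigma$ on $\Omega\times[-T,0]$ by comparison with explicit sub- and supersolutions.

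For the upper bound, I take $\bar u=A\,\phi$, where $\phi\asymp d^\sigma$ satisfies $L\phi\le -1$ in $\Omega$ with $\phi=0$ outside. Such a $\phi$ exists for $C^{1,1}$ domains and $L\in\mathcal L_*$ by the Ros-Oton--Serra barriers built from $(x_n)_+^\sigma$. Then $\bar u^\gamma L\bar u\le0=\partial_t\bar u$, so $\bar u$ is a supersolution.

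For the lower bound, I take $\underline u=\theta(t)\,\psi$, where $\psi\asymp d^\sigma$ satisfies $L\psi\ge -C$, for instance the first eigenfunction-type barrier. The factor $\theta$ solves an ODE of the form $\theta'=-C\theta^{1+\gamma}$, which keeps $\theta$ bounded below on $[-T,0]$.

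Comparison for viscosity solutions of the degenerate equation requires care, since the coefficient $u^\gamma$ vanishes on $\partial\Omega$. I will handle this by comparing with strict barriers perturbed by $\varepsilon$ and passing to the limit.

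\textbf{Step 2: oscillation decay at the boundary.} Here I use the scaling $u_r(x,t)=r^{-\sigma}u(x_0+rx,t_0+r^{\sigma(2-\gamma)}t)$. The key claim is that there exist $\alpha>0$ and $C$ such that for each $x_0\in\partial\Omega$ and each $r\le 1$ there is a constant $Q(x_0,t_0,r)$ with
\[
\|u-Q\,(x\cdot\nu)_+^\sigma\|_{L^\infty(Q_r)}\le C r^{\sigma+\alpha}.
\]
Here $Q_r$ is the parabolic cylinder adapted to this scaling. The same bound then holds for $u/d^\sigma-Q$ on $Q_r\cap\{d>r/2\}$, after using the $C^{1,1}$ geometry to compare $d^\sigma$ with the flat profile.

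I prove the claim by contradiction and compactness, in the style of Ros-Oton--Serra and Jin--Ros-Oton--Xiong. Suppose it fails along sequences $u_k$, $L_k$, $\Omega_k$, $r_k\to0$. I introduce the monotone quantity $\theta(r)=\sup_{\rho\ge r}\rho^{-\alpha}\|u_k-Q_{k,\rho}(x\cdot\nu)_+^\sigma\|/\rho^\sigma$, with $Q$ chosen optimally in $L^2$, and rescale
\[
w_k=\frac{u_k(r_k\cdot)-Q_{k,r_k}r_k^\sigma(x_n)_+^\sigma}{r_k^{\sigma+\alpha}\theta(r_k)}.
\]
This gives $\|w_k\|_{L^\infty(Q_R)}\le CR^{\sigma+\alpha}$, which is the growth control needed for the nonlocal tails because $\alpha<\sigma$. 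The flattening of the domain and the orthogonality condition $\int w_k(x_n)_+^\sigma=0$ on $Q_1$ also pass to the limit.

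The equation linearizes: $w_k$ solves $\partial_t w=a_k(x,t)u_k^\gamma/(\ldots)$ times $L_k w$, up to errors. Rescaled, $u_k^\gamma\approx a\,(x_n)^{\sigma\gamma}$ with $a$ bounded between two constants but only measurable. I therefore avoid linearizing the coefficient. Since $L_k((x_n)_+^\sigma)=0$, the function $w_k$ satisfies the extremal inequalities
\[
\partial_t w\le C x_n^{\sigma\gamma}M^+w,\qquad \partial_t w\ge c\,x_n^{\sigma\gamma}M^-w
\]
in the viscosity sense; more precisely, $\partial_tw_k=b_k x_n^{\sigma\gamma}L_kw_k$ with $b_k\in[c,C]$.

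Compactness comes from interior parabolic H\"older estimates, which are valid since the equation is uniformly parabolic away from $\{x_n=0\}$, combined with boundary barriers from Step 1 applied to $w_k$. These give equicontinuity up to $\{x_n=0\}$ of $w_k/(x_n)_+^\sigma$ at a small scale, via a weak Harnack (Krylov--Safonov type) boundary lemma. I expect this to be the main technical input.

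\textbf{Step 3: Liouville theorem and conclusion.} The limit $w$ is an ancient solution in the half-space with growth $|w|\le C(1+R)^{\sigma+\alpha}$. Since the operator $L_k$ converges weakly in $\mathcal L_*$ but $b_k$ only converges weak-$*$, I formulate the Liouville theorem for the extremal class: such $w$ must equal $c\,(x_n)_+^\sigma$. The proof applies the boundary Harnack oscillation decay to $w/(x_n)^\sigma$ at all scales and uses the growth to force zero oscillation. The orthogonality condition then gives $w\equiv0$, contradicting the nondegeneracy normalization.

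Finally, I glue the boundary expansion with interior estimates. At an interior point with $d\sim r$, rescaling gives a uniformly parabolic problem, so $u/d^\sigma$ is $C^\alpha$ there with the bounds of Step 2. The standard combination argument then yields the full $C^{\alpha,\alpha/(\sigma(2-\gamma))}_{x,t}$ estimate on $\overline\Omega\times[-T/2,0]$. The main obstacle is the Liouville step, because the coefficient $b$ is merely measurable; I will first try the extremal-operator boundary Harnack of Ros-Oton--Serra, adapted parabolically with the weight $x_n^{\sigma\gamma}$.
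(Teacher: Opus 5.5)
Your architecture is the paper's: barriers giving $u\asymp d^\sigma$, a half-space oscillation decay yielding a Liouville theorem for $\partial_t v-x_n^{\sigma\gamma}M^\pm v$, a blow-up on the monotone quantity $\theta(r)$ with an $L^2$ projection and orthogonality, and gluing with interior estimates. However, Step~2 does not go through as written, because you approximate by the flat profile $Q\,(x\cdot\nu)_+^\sigma$ in a curved domain.

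\textbf{The gap.} The identity $L_k\bigl((x_n)_+^\sigma\bigr)=0$ holds only in $\{x_n>0\}$. At boundary points where $\Omega$ is not locally contained in the inner half-space (non-convex points), the rescaled domains $r_k^{-1}\Omega_k$ contain points with $x_n\le0$, within distance $O(r_k|x|^2)$ of the tangent plane. There $L_k\bigl((x_n)_+^\sigma\bigr)=c_L|x_n|^{-\sigma}$ for $x_n<0$, and the defining integral diverges on $\{x_n=0\}$. Also $w_k\neq0$ on $\{x_n>0\}\setminus r_k^{-1}\Omega_k$.

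So $w_k$ does not satisfy the two-sided extremal inequalities with bounded right-hand side up to the boundary. Nothing in your plan then gives the uniform control $|w_k|\le C\,d_k^\sigma$, which you need for equicontinuity up to the boundary and for the limit to be continuous and vanish on $\{x_n\le0\}$.

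\begin{itemize}
\item The bounds from Step~1 only give $|w_k|\lesssim d_k^\sigma\,r_k^{-\alpha}/\theta(r_k)$ plus a small term. This is not uniform in $k$, since $r_k^{-\alpha}/\theta(r_k)$ need not stay bounded.
\item The Step~1 barriers concern positive solutions of the nonlinear equation, not signed errors.
\item Your substitute, equicontinuity of $w_k/(x_n)_+^\sigma$ via a boundary Harnack in nearly flat domains, is essentially the curved-domain oscillation decay that the blow-up is meant to avoid. It would also need the same two-sided inequalities near the boundary.
\end{itemize}

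\textbf{How the paper closes it.} There are two ingredients.

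First, it subtracts $a\psi$, where $\psi$ is the first eigenfunction of $L$ in $\Omega$. The error then vanishes outside $\Omega$. Since $L\psi=-\lambda_1\psi$, it satisfies the extremal inequalities in the whole rescaled domain with forcing $\lesssim r_j^{2\sigma-\alpha}\theta(r_j)^{-1}d_j^{\sigma(1+\gamma)}\to0$. The regularity $\psi/d^\sigma\in C^\beta(\overline\Omega)$ from Lemma~\ref{firsteigenfunction} gives $\psi_{k_j}(r_jx)/r_j^\sigma\to\kappa(x_n)_+^\sigma$, which is what lets the orthogonality condition pass to the limit. It also lets you return from $u/\psi$ to $u/d^\sigma$ at the end.

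Second, it proves a boundary growth estimate $|v|\le C(N+F)d^\sigma$ for \emph{signed} solutions of $\partial_tv-d^{\sigma\gamma}M^+v\le F$, $\partial_tv-d^{\sigma\gamma}M^-v\ge-F$ with weighted tails (Lemma~\ref{bdryreg}). The barrier is $K\Phi+\rho$ with $\Phi=2\min\{d,\ell\}^\sigma-\min\{d,\ell\}^{\sigma(2-\gamma)}$; the identity $L(x_n)_+^{\sigma(2-\gamma)}=c_Lx_n^{-\sigma\gamma}$ absorbs the degenerate weight. The argument uses the comparison principle for the weighted extremal equations. Combined with interior H\"older estimates, this yields the compactness in Corollary~\ref{cor:compactness}.

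\textbf{Step~1.} Otherwise your Step~1 is correct and simpler than the paper's separable barriers. $Ah$ with $Lh=-1$ is a valid supersolution, and $\theta(t)\psi$ with $\theta'=-\lambda_1\|\psi\|_\infty^\gamma\theta^{1+\gamma}$ is a valid subsolution. With only $L\psi\ge-C$ the subsolution check would fail near $\partial\Omega$, since it would require $\theta'\le-C\theta^{1+\gamma}\psi^{\gamma-1}$. Note that the comparison works because of the sign $L\bar u\le0$, resp.\ $L\underline u\le0$, not because of the $\varepsilon$-perturbation. The sign lets you discard $(p^\gamma-q^\gamma)LV$ at a doubled maximum; see Proposition~\ref{conditional} and Remark~\ref{remcomparison}.
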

The notion of viscosity solution used here is given in Definition~\ref{defn:vis}.

\subsection{Related work}
We first recall some results for the classical porous medium equation
\begin{equation}\label{eq:classical-problem}
\begin{cases}
\partial_t u=\Delta(u^m) & \text{in }\Omega\times(0,+\infty),\\
u=0 & \text{on }\partial \Omega\times(0,+\infty),\\
u(\cdot,0)=u_0 & \text{on }\Omega,
\end{cases}
\end{equation}
where $m>1$. Solutions to this equation exist globally in time. Aronson and Peletier \cite{AronsonPeletier1981} established spatial Lipschitz continuity after a waiting time and characterized the long-time behavior of solutions to \eqref{eq:classical-problem}, with a sharp convergence rate. Jin, Ros-Oton, and Xiong \cite{JinRosOtonXiong2024} proved optimal regularity and refined asymptotic results after a waiting time. For related fully nonlinear second-order equations, Kim and Lee \cite{KimLeeDegenerate2013} studied the long-time behavior, while Lee and Yun \cite{LeeYun2025} established boundary regularity estimates for fully nonlinear degenerate parabolic equations. Yun \cite{Yun2024} proved global $C^{2,\alpha}$ estimates for fully nonlinear equations of porous medium type. Boundary regularity for the classical fast diffusion equation, corresponding to $0<m<1$ in \eqref{eq:classical-problem}, has been studied in \cite{ChenDiBenedetto1988, DiBenedettoKwongVespri1991,JinXiong2023Boundary,JinXiong2025Dirichlet}. For the very fast diffusion regime $m<-1$, related boundary regularity results and estimates for the linearized equations have been obtained in \cite{JinTuXiongZheng2026Schauder, JinTuXiongZheng2026Regularity}.

Boundary regularity for linear nonlocal parabolic equations, corresponding to $\gamma=0$, has also been studied. For general stable operators, including those with singular spectral measures, Fern\'andez-Real and Ros-Oton \cite{FernandezRealRosOton2017Parabolic} proved that solutions of $\partial_tu-Lu=f$ with bounded $f$ and zero exterior data in bounded $C^{1,1}$ domains satisfy $u/d^\sigma\in C_{x,t}^{\sigma-\epsilon,(\sigma-\epsilon)/(2\sigma)}$ up to the boundary away from the initial time, for every $0<\epsilon<\sigma$. 

For nonlocal porous medium equations in bounded domains, the works \cite{BonforteVazquez2015,BonforteSireVazquez2015,BonforteFigalliVazquez2018} establish existence, a priori estimates, sharp global bounds, and long-time asymptotics. Bonforte, Figalli, and Ros-Oton \cite{BonforteFigalliRosOton2017} proved the sharp boundary estimates for the fractional porous medium equation recalled above. More recently, Bonforte and Fuertes-Mor\'an \cite{BonforteFuertesMoran2025} established sharp boundary estimates and Harnack inequalities for a broad class of fractional porous medium-type equations.

\subsection{Outline of the proof}

For the classical porous medium equation \eqref{eq:classical-problem}, Jin, Ros-Oton, and Xiong \cite{JinRosOtonXiong2024} established boundary H\"older regularity of the quotient through a De Giorgi iteration as in \cite{JinXiong2025Dirichlet}. In the nonlocal setting, the boundary degeneracy is coupled with interactions across the whole space. In particular, localization introduces tail terms, and comparison arguments require control of the functions throughout the exterior of the comparison region. We also allow operators with merely measurable angular densities. To address these difficulties, we use a blow-up and compactness argument based on boundary estimates with tail control and a Liouville theorem
in the half-space.

We first establish comparison principles for the nonlinear equation and the auxiliary extremal equations, allowing exterior values in the weighted class \eqref{eq:tail-class}. An elliptic profile and the associated separable parabolic barriers then yield $u\asymp d^\sigma$. These bounds identify the degeneracy $u^\gamma\asymp d^{\sigma\gamma}$ and the intrinsic time scale $r^{\sigma(2-\gamma)}$.

The next step is to study the corresponding extremal inequalities in the half-space. We construct boundary barriers and prove a Hopf lemma with error terms depending on the forcing and the negative tail. These estimates yield decay of the oscillation of $u/(x_n)^\sigma$, with uniform tail control at each scale. Rescaling this estimate gives a Liouville theorem for ancient solutions with controlled growth.

To pass from the half-space to $C^{1,1}$ domains, we prove boundary growth estimates for signed solutions. Together with the interior H\"older estimate and uniform tail bounds, these estimates provide compactness both locally uniformly and in the weighted $L^1$ norm. The latter convergence is needed to pass to the limit in the nonlocal equations. We subtract suitable multiples of the first eigenfunction and rescale the resulting errors. The Liouville theorem then yields a boundary expansion with a higher-order remainder. Comparing these expansions at nearby boundary points and applying interior estimates gives H\"older continuity of $u/d^\sigma$ in space and time.

The paper is organized as follows. Section~\ref{sec:vs} introduces the viscosity framework, recalls the interior estimates, and states the comparison principles, whose proofs are given in Appendix~\ref{app:comparison}. Section~\ref{sec:Lip} constructs the elliptic profile and uses separable parabolic barriers to establish the two-sided boundary bounds. In Section~\ref{sec:liouville}, we prove the boundary growth estimate and a Hopf lemma in the half-space, derive the oscillation estimate for solutions with controlled tails, and establish the Liouville theorem. Section~\ref{sec:Boundary estimate} develops boundary barriers in $C^{1,1}$ domains and proves the compactness result for rescaled solutions. Finally, Section~\ref{sec:ratio} proves the boundary expansion and H\"older regularity of the quotient, and applies the result to weak dual solutions of the fractional porous medium equation.

\medskip

\noindent\textbf{AI usage statement.} All mathematical results, the proof structure, and the main arguments were developed by the authors. In subsequent revisions, AI tools were used only to edit the language and organize the exposition. All AI-assisted modifications were independently reviewed and verified by the authors, who take full responsibility for the final manuscript. 

\section{Viscosity solutions and comparison principles}\label{sec:vs}
\subsection{Operators, notation, and viscosity solutions}
Unless otherwise stated, $\Omega$ is a bounded $C^{1,1}$ domain, and we write $d(x)=\operatorname{dist}(x,\partial\Omega)$ for $x\in\Omega$. We extend powers of the interior distance by zero when they are used as functions on $\R^n$. We use the cylinders
\[
Q_r(x_0,t_0)=B_r(x_0)\times(t_0-r^{\sigma(2-\gamma)},t_0],
\qquad Q_r=Q_r(0,0),
\]
and $Q_r^+=Q_r\cap(\{x_n>0\}\times\R)$. We use the superscript $+$ only for the half-space problem. In a curved domain we use $Q_r\cap(\Omega\times\R)$, including the part below a tangent plane. We write $$\Omega_T=\Omega\times(-T,0]$$ and
\[
\delta f(x,z)=\frac{f(x+z)+f(x-z)}2-f(x).
\]
All viscosity equations on a finite time interval are imposed on an interval of the form $(a,b]$. Initial data are prescribed at time $a$, and the equation is tested also at the terminal time $b$. All contacts in time are from the past. In the weak dual formulation in the introduction, test functions have compact support in the open time interval.

We impose the following integrability condition for the nonlocal terms. Functions are measurable on $\R^n$ at each time. When continuity is assumed, it is required on the closure of the space-time domain in question. On every compact time interval $I$ under consideration we require
\begin{equation}\label{eq:tail-class}
\int_{\R^n}\frac{|u(x,t)|}{1+|x|^{n+2\sigma}}\,dx<\infty,
\qquad
\int_{\R^n}\frac{|u(x,t)-u(x,t_0)|}{1+|x|^{n+2\sigma}}\,dx\longrightarrow0
\quad \mbox{as } t\to t_0\text{ in }I.
\end{equation}
This condition expresses continuity in the weighted $L^1$ norm; the exterior values need not be continuous or bounded in space. It implies
\begin{equation}\label{eq:uniform-tail}
\sup_{t\in I}\int_{\R^n}\frac{|u(x,t)|}{1+|x|^{n+2\sigma}}\,dx<\infty,
\qquad
\lim_{R\to\infty}\sup_{t\in I}\int_{|x|>R}\frac{|u(x,t)|}{1+|x|^{n+2\sigma}}\,dx=0.
\end{equation}
For a continuous function with zero exterior data in a bounded domain, \eqref{eq:tail-class} follows directly from continuity on the compact closure. The polynomial growth bounds used later also imply it on compact time intervals by dominated convergence.

\begin{Definition}[Viscosity solutions]\label{defn:vis}
Let $u:\R^n\times[-T,0]\to[0,\infty)$ satisfy \eqref{eq:tail-class} and be upper semicontinuous on $\overline\Omega\times[-T,0]$. It is a viscosity subsolution of
\begin{equation}\label{fpme}
\partial_tu=u^\gamma Lu\quad\text{in }\Omega_T
\end{equation}
if the following condition holds. Choose $(x_0,t_0)\in\Omega_T$, $r,\tau>0$ with $\overline{B_{2r}(x_0)}\subset\Omega$ and $t_0-\tau>-T$, and
\[
\varphi\in C^{2,1}_{x,t}\bigl(B_{2r}(x_0)\times(t_0-2\tau,t_0+2\tau)\bigr)
\]
such that
\[
\varphi(x_0,t_0)=u(x_0,t_0),\qquad \varphi\geq u\quad\text{on }B_r(x_0)\times(t_0-\tau,t_0].
\]
Define
\[
u_\varphi(x,t)=
\begin{cases}
\varphi(x,t),&(x,t)\in U:=B_r(x_0)\times(t_0-\tau,t_0+\tau),\\
u(x,t),&(x,t)\in \R^n\times[-T,0]\setminus U.
\end{cases}
\]
Then
\[
\partial_t\varphi(x_0,t_0) \leq u(x_0,t_0)^\gamma L\bigl(u_\varphi(\cdot,t_0)\bigr)(x_0).
\]
A function $u:\R^n\times[-T,0]\to[0,\infty)$ satisfying \eqref{eq:tail-class} and lower semicontinuous on $\overline\Omega\times[-T,0]$ is a viscosity supersolution of \eqref{fpme} if the corresponding condition holds with lower touching and the reversed inequality.

A function $u:\R^n\times[-T,0]\to[0,\infty)$ satisfying \eqref{eq:tail-class} and continuous on $\overline\Omega\times[-T,0]$ is a viscosity solution of \eqref{fpme} if it is both a viscosity subsolution and a viscosity supersolution.
\end{Definition}

Unless the operator class is explicitly enlarged, the extremal operators associated
with $\mathcal L_*$ are
\[
M^+f=\sup_{L\in\mathcal L_*}Lf, \qquad M^-f=\inf_{L\in\mathcal L_*}Lf.
\]
They are positively homogeneous, $M^-(-f)=-M^+f$, and
\[
M^-(f-g)\leq M^\pm f-M^\pm g\leq M^+(f-g).
\]
These viscosity difference inequalities follow from the usual doubling argument; see Theorem~5.9 of \cite{luis} and the parabolic framework in \cite{Chang2014,changstability}.

The operators in $\mathcal L_*$ also admit a directional representation. For $\theta\in\mathbb{S}^{n-1}$, 
\[
-(-\partial_{\theta\theta})^\sigma f(x)=c_{1,\sigma}\int_{\R}\delta f(x,\rho\theta)\frac{d\rho}{|\rho|^{1+2\sigma}},
\qquad Lf=\frac1{2c_{1,\sigma}}\int_{\mathbb S^{n-1}}\mu(\theta) \bigl(-(-\partial_{\theta\theta})^\sigma f\bigr)\,d\theta,
\]
where the constant $c_{1,\sigma}$ is given in \cite{RosOtonSerra2014Dirichlet}. 
Thus $M^+$ is obtained by integrating $\Lambda A^+-\lambda A^-$, and $M^-$ by integrating $\lambda A^+-\Lambda A^-$, with $A=-(-\partial_{\theta\theta})^\sigma f$ and the factor $(2c_{1,\sigma})^{-1}$. In particular, every operator in $\mathcal L_*$ annihilates $(x_n)_+^\sigma$ in the upper half-space. It follows that
\begin{equation}\label{eq:subtract-power}
M^\pm\bigl(f-a(x_n)_+^\sigma\bigr)=M^\pm f
\quad\text{in }\{x_n>0\},\qquad a\in\R.
\end{equation}

Definition~\ref{defn:vis} also applies to real-valued functions satisfying equations with a prescribed coefficient and $M^+$ or $M^-$. The nonnegativity restriction is needed only for the power $u^\gamma$. Elliptic inequalities use the spatial version of the same patching procedure.  

We will use the following well-known interior estimates.

\begin{Theorem}[Interior H\"older and Harnack estimates]\label{thm:interior-estimates}
Let $u:\R^n\times [-1,0]\to\R$ satisfy \eqref{eq:tail-class} with $I=[-1,0]$ and be continuous on $\overline{B_2}\times [-1,0]$. Suppose that, for some $F\geq0$,
\[
\partial_tu-M^-u\geq-F,\qquad
\partial_tu-M^+u\leq F
\quad\text{in }B_2\times(-1,0]
\]
in the viscosity sense. There exist $\beta\in(0,\min\{1,2\sigma\})$ and $C>0$, depending only on $n,\sigma,\lambda,\Lambda$, such that
\begin{equation}\label{eq:interior-holder}
\begin{aligned}
&\|u\|_{C_{x,t}^{\beta,\beta/(2\sigma)}(B_{1/2}\times[-1/2,0])}\leq C\bigg(\|u\|_{L^\infty(B_2\times [-1,0])}+F
+\sup_{t\in I}\int_{\R^n}\frac{|u(y,t)|}{1+|y|^{n+2\sigma}}\,dy\bigg).
\end{aligned}
\end{equation}
If, in addition, $u\geq0$ in $B_2\times [-1,0]$, then
\begin{equation}\label{eq:interior-harnack}
\begin{aligned}
\sup_{B_{1/2}\times[-3/4,-1/2]}u
&\leq C\bigg(\inf_{B_{1/2}\times[-1/4,0]}u+F+\sup_{t\in I}\int_{\R^n\setminus B_2}
\frac{u^-(y,t)}{1+|y|^{n+2\sigma}}\,dy\bigg).
\end{aligned}
\end{equation}
\end{Theorem}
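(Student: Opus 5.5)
This statement is described in the paper as ``well-known'', so the plan is to reduce it to the existing theory for parabolic nonlocal equations with bounded measurable kernels rather than to reprove the underlying estimates. The only new feature is that $u$ is neither bounded nor continuous outside $B_2$; it is only controlled in the weighted $L^1$ sense \eqref{eq:tail-class}. The strategy is to truncate $u$ outside a ball and absorb the exterior part into the right-hand side $F$. We then apply the Hölder and Harnack estimates for the class $\mathcal L_0\supseteq\mathcal L_*$ in the parabolic viscosity framework of Chang-Lara and D\'avila \cite{Chang2014,changstability}, which in turn follow the elliptic theory of Caffarelli and Silvestre \cite{luis}.

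\textbf{Hölder estimate.} Let $\chi$ be a cutoff with $\chi=1$ on $B_{3/2}$ and $\operatorname{supp}\chi\subset B_2$, and set
\[
w=\chi u,\qquad v=(1-\chi)u.
\]
For $x\in B_1$, every $L\in\mathcal L_*$ satisfies
\[
|Lv(x)|\le \Lambda\int_{\R^n\setminus B_{3/2}}\frac{|u(y,t)|}{|x-y|^{n+2\sigma}}\,dy\le C\,\mathrm{Tail}(u),
\]
where $\mathrm{Tail}(u)=\sup_{t}\int|u|/(1+|y|^{n+2\sigma})\,dy$. The bound holds because $|x-y|\gtrsim 1+|y|$ on that region.

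Next I check that $w$ satisfies the same inequalities in $B_1\times(-1,0]$ with $F$ replaced by $F+C\,\mathrm{Tail}(u)$. To see this, take a test function touching $w$ at a point. The patched function $u_\varphi$ differs from $w_\varphi$ exactly by $v$. The difference inequalities $M^-(f-g)\le M^\pm f-M^\pm g\le M^+(f-g)$, applied pointwise to the patched functions (which are $C^{1,1}$ near the contact point, so the extremal operators are classical there), then transfer the inequality at the cost of $\sup_{B_1}|M^\pm v|$. The function $w$ is bounded and continuous, with $\|w\|_\infty\le\|u\|_{L^\infty(B_2\times[-1,0])}$, so the bounded-data Hölder estimate of \cite{Chang2014} applies to it. A covering and rescaling argument passes from $B_1$ to $B_{1/2}\times[-1/2,0]$, and \eqref{eq:interior-holder} follows, since $u=w$ on $B_{1/2}$. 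One point needs care: the time continuity of the tail in \eqref{eq:tail-class} is what makes $v(\cdot,t)$ vary continuously, so that the error term is a genuine bounded forcing in the viscosity sense.

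\textbf{Harnack estimate.} Since $u\ge0$ in $B_2$, split $v=v^+-v^-$. The positive part only helps the supersolution inequality: $M^-(w+v^+)\ge M^-w+M^-v^+$ and $M^-v^+\ge0$ in $B_{3/2}$, because $v^+$ vanishes there and is nonnegative outside. So only $v^-$ enters the error, with size
\[
\sup_t\int_{\R^n\setminus B_2}\frac{u^-}{1+|y|^{n+2\sigma}}\,dy.
\]
The weak Harnack inequality for the supersolution gives the lower bound in terms of an $L^\varepsilon$ mean. The local maximum principle for the subsolution, where the tail of $v^+$ is needed, would spoil the estimate. The standard remedy, used by Chang-Lara and D\'avila and by Caffarelli and Silvestre in the elliptic case, is to bound the positive tail itself by the infimum. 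Apply the supersolution inequality at a point of $\inf u$ against a bump barrier: the nonlocal contribution of $u^+$ outside $B_2$ then pushes the infimum up, so $\int u^+/(1+|y|^{n+2\sigma})\lesssim \inf u+F+\text{negative tail}$.

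I expect this last step, the control of the positive tail in the parabolic setting with a time lag between the sup cylinder and the inf cylinder, to be the main obstacle. If it does not go through cleanly, I would cite the parabolic Harnack inequality with tails in \cite{changstability} directly, noting that $\mathcal L_*\subseteq\mathcal L_0$ and that the constants depend only on $n,\sigma,\lambda,\Lambda$.
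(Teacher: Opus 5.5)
Your H\"older part is correct. Cutting off with $\chi$ and absorbing $L((1-\chi)u)$, which is bounded on $B_1$ by $C$ times the weighted tail, into a constant forcing reduces you to the bounded-data estimate of \cite{Chang2014}; the paper simply cites Theorem~5.1 there.

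The gap is in the Harnack part. Your observation that only $v^-$ enters the error is correct. The function $w+v^+=u^+$ is globally nonnegative and satisfies both extremal inequalities in $B_{3/2}\times(-1,0]$ with forcing $F+CN$, where $N$ is the negative tail in \eqref{eq:interior-harnack}. The step you then rely on is bounding the positive tail by the infimum via a barrier at a minimum point, and this fails in the parabolic setting. Testing the supersolution inequality at a minimum point $(x_1,t_1)$ of $B_{1/2}\times[-1/4,0]$ controls $\int u^+(y,t_1)(1+|y|^{n+2\sigma})^{-1}\,dy$ only at the single time $t_1$. A local maximum principle on $B_{1/2}\times[-3/4,-1/2]$ needs the tail at earlier times. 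No such sup-in-time bound can hold at all. Take the fractional heat equation in $B_2$ with data $1$, and add an exterior pulse of height $A$ on $B_4\setminus B_3$ lasting a time $1/A$. Comparison with $1+C\int_{-1}^t(\text{pulse})\,ds$ keeps $1\le u\le C$ in $B_2$ uniformly in $A$. Yet $\sup_t\int u^+(y,t)(1+|y|^{n+2\sigma})^{-1}\,dy\gtrsim A$. So your main line stops exactly at the obstacle you flagged. Your fallback of citing a Harnack inequality ``with tails'' has two problems: it either reintroduces a positive-tail term that is absent from \eqref{eq:interior-harnack}, or it requires boundedness, which $u^+$ need not have.

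The missing idea, which is the paper's route, is that no positive-tail estimate is needed. The Harnack inequalities of \cite{chang2016,KimLee2013} for bounded, globally nonnegative functions carry no tail term, so you only have to reach that class. Take $u_k=\min\{u^+,k\}$ with $k>\|u\|_{L^\infty(B_2\times I)}$. Then $u_k=u$ in $B_2\times I$, and $u-u_k=-u^-+(u-k)^+$ vanishes in $B_2$. On $B_{3/2}\times I$ this gives
\[
\sup_{L\in\mathcal L_*}|L(u-u_k)|\le C\Big(N+\sup_{t\in I}\int_{\R^n}\frac{(u(y,t)-k)^+}{1+|y|^{n+2\sigma}}\,dy\Big)=CN+o(1)\qquad(k\to\infty).
\]
The $o(1)$ comes from \eqref{eq:tail-class}. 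Continuity of $t\mapsto u(\cdot,t)$ in the weighted $L^1$ norm on the compact interval $I$ makes $\{u(\cdot,t)\}_{t\in I}$ uniformly integrable. Apply the bounded Harnack inequality to $u_k$ with forcing $F+CN+o(1)$ and let $k\to\infty$. This gives \eqref{eq:interior-harnack}, and the time-lag issue you anticipated never arises.
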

\begin{proof}
The H\"older estimate \eqref{eq:interior-holder} follows from Theorem~5.1 of \cite{Chang2014}. The Harnack estimate \eqref{eq:interior-harnack} follows from \cite{chang2016, KimLee2013}\footnote{Although the Harnack result in \cite{chang2016} is stated for $\sigma\in(1/2,1)$ and the critical case $\sigma=1/2$, it allows for a critical drift term, and its proof can be readily adapted to the symmetric operators considered here for every $\sigma\in(0,1)$.}  with a truncation argument. Let 
\[
N=\sup_{t\in I}\int_{\mathbb R^n\setminus B_2}
\frac{u^-(y,t)}{1+|y|^{n+2\sigma}}\,dy
\] 
and $u_k=\min\{u^+,k\}$, where $k>\|u\|_{L^\infty(B_2\times I)}$. Then $u_k=u$ in $B_2\times I$. It follows from \eqref{eq:tail-class} that 
\begin{align*}
\sup_{L\in\mathcal L_*}|L(u-u_k)|
&\leq C\bigg(N+\sup_{t\in I}\int_{\R^n}
\frac{(u(y,t)-k)^+}{1+|y|^{n+2\sigma}}\,dy\bigg)=CN+o(1)\quad\mbox{as }k\to\infty
\end{align*}
uniformly on $B_{3/2}\times I$. Thus $u_k$ satisfies both extremal inequalities in $B_{3/2}\times(-1,0]$ with right-hand sides bounded by $F+CN+o(1)$. Applying the Harnack inequality in \cite{chang2016, KimLee2013} and letting $k\to\infty$ proves \eqref{eq:interior-harnack}. 
\end{proof}

\subsection{Comparison principles}
The first comparison principle will be used to compare solutions with the parabolic barriers. The sign condition is understood as an elliptic viscosity inequality at every fixed time.

\begin{Proposition}[Conditional comparison principle]\label{conditional}
Let $u,v:\R^n\times[-T,0]\to[0,\infty)$ satisfy \eqref{eq:tail-class} and be continuous on $\overline\Omega\times[-T,0]$. Suppose
\[
\partial_tu\leq u^\gamma Lu,\qquad
\partial_tv\geq v^\gamma Lv\quad\text{in }\Omega_T,
\]
in the viscosity sense, and
\[
u\leq v\quad\text{on }\Omega\times\{-T\},\qquad
u\leq v\quad\text{on }\Omega^c\times[-T,0].
\]
Exterior inequalities may be imposed almost everywhere in space, with the boundary values given by continuity. If $Lv(\cdot,t)\leq0$ in $\Omega$ for every $t\in(-T,0]$, then $u\leq v$ on $\overline\Omega\times[-T,0]$.
\end{Proposition}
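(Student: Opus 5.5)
The plan is a contradiction argument at a first positive maximum of $u-v_\varepsilon$, where $v_\varepsilon$ is a strict perturbation of $v$. The sign condition $Lv\le0$ is what lets us swap the coefficient $v^\gamma$ for the larger $u^\gamma$. Formally, at a point where $u>v\ge0$ we have $u^\gamma>v^\gamma$, so $Lv\le0$ gives $v^\gamma Lv\ge u^\gamma Lv$. Hence
\[
\partial_t(u-v)\le u^\gamma Lu-v^\gamma Lv\le u^\gamma L(u-v),
\]
and the right-hand side is $\le0$ at a global spatial maximum of $u-v$. This only gives a non-strict inequality, so we need a strict supersolution.

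\textbf{Perturbation.} I would use multiplication rather than adding a constant, since adding a constant would spoil both the exterior ordering and $Lv\le0$. Set
\[
v_\varepsilon(x,t)=(1+\varepsilon(t+T))\,v(x,t),
\]
which keeps $Lv_\varepsilon\le0$ in $\Omega$ and $u\le v\le v_\varepsilon$ outside $\Omega$ and at $t=-T$. Formally,
\[
\partial_tv_\varepsilon\ge\varepsilon v+(1+\varepsilon(t+T))v^\gamma Lv\ge \varepsilon v+v_\varepsilon^\gamma Lv_\varepsilon,
\]
where the last step uses $(1+\varepsilon(t+T))^{1-\gamma}\ge1$ together with $Lv\le0$. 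This is strict wherever $v>0$.

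\textbf{The case $v=0$ at the maximum point.} Suppose $\sup_{\overline\Omega\times[-T,0]}(u-v_\varepsilon)>0$. By continuity on the compact set and the ordering on the exterior and at the initial time, the supremum is attained at some $(x_0,t_0)$ with $x_0\in\Omega$ and $t_0>-T$. Take $t_0$ to be the first time at which a given positive level is reached. If $v(x_0,t_0)=0$, then $u(x_0,t_0)>0$ while $v\ge0$ everywhere, and one can exploit $u^\gamma>0$ directly. Equivalently, one can add a tiny multiple of a positive barrier. I would handle this by also replacing $u$ with $(1-\varepsilon)u$. This is still a subsolution, since $\partial_t((1-\varepsilon)u)\le(1-\varepsilon)^{1-\gamma}((1-\varepsilon)u)^\gamma Lu$, and it allows a comparison of the values of $Lu$ in the degenerate case.

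\textbf{Rigorous viscosity version.} Neither function is smooth, so the formal argument has to be justified. I would regularize by sup-convolution of $u$ and inf-convolution of $v_\varepsilon$ in space–time, restricted to a slightly smaller domain, or equivalently double variables with the penalization
\[
\frac{|x-y|^2}{2\delta}+\frac{|t-s|^2}{2\delta}.
\]
Then I would apply the nonlocal Jensen–Ishii type lemma in the form used by Caffarelli–Silvestre (Theorem~5.9 of \cite{luis}) and its parabolic version in \cite{Chang2014,changstability}. The resulting test functions are patched in the sense of Definition~\ref{defn:vis}. The tail condition \eqref{eq:tail-class} makes the nonlocal terms converge as $\delta\to0$, and the exterior inequality, valid almost everywhere, gives the correct sign for the contribution from $\Omega^c$. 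The condition $Lv\le0$ holds only as an elliptic viscosity inequality at each fixed time. It must therefore be applied at the same time slice and to the same lower test function used for the supersolution inequality. This is compatible because the patched $u_\varphi(\cdot,t_0)$ is the object evaluated in both.

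\textbf{Main obstacle.} I expect the hardest step to be this doubling argument. The coefficient $u^\gamma$ is only Hölder continuous, not Lipschitz, near $u=0$, and the inequality $Lv\le0$ must be combined with the parabolic supersolution inequality at the doubled points. The strict gain $\varepsilon v(x_0,t_0)$ has to dominate the errors $u(x_\delta,t_\delta)^\gamma-v(y_\delta,s_\delta)^\gamma\to0$ multiplied by the nonlocal terms, which stay bounded as $\delta\to0$. I would first check that the doubled points converge to the maximum point and that $u>v_\varepsilon\ge0$ persists there. This puts the coefficients at a positive distance apart, so the ordering $u^\gamma\ge v_\varepsilon^\gamma$ survives the limit. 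Letting $\delta\to0$ and then $\varepsilon\to0$ gives $u\le v$.
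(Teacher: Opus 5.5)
Your proposal has a genuine gap in the degenerate case, and the fix you reject is the one that works.

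\textbf{Where the argument fails.} Your multiplicative perturbation $v_\varepsilon=(1+\varepsilon(t+T))v$ is a valid supersolution. However, its strict gain is $\varepsilon v$. At the doubled points this gain is $\varepsilon v(y_\delta,s_\delta)\to\varepsilon v(x_0,t_0)$, which vanishes whenever the limiting maximum point has $v(x_0,t_0)=0$.

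This case really occurs. For example, $v\equiv0$ satisfies every hypothesis, and then the proposition is the nontrivial claim that a subsolution with zero initial and exterior data stays $\le0$.

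Your remedy, replacing $u$ by $\widetilde u=(1-\varepsilon)u$, only gives
\[
\partial_t\widetilde u\le(1-\varepsilon)^{-\gamma}\widetilde u^\gamma L\widetilde u.
\]
This is weaker than the original inequality when $L\widetilde u>0$. Running the doubling with it, and using $LV_\rho\le0$, still ends at
\[
\varepsilon q\le(1-\varepsilon)^{-\gamma}p^\gamma\bigl(LU_\rho-LV_\rho\bigr),\qquad q\to0,
\]
so there is no contradiction. The other hints in that paragraph (exploit $u^\gamma>0$ directly, add a positive barrier) are not developed.

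\textbf{How to close it.} Your reason for avoiding an additive perturbation is incorrect. $L$ annihilates spatial constants, so $L\bigl(v+\eta(t+T+1)\bigr)=Lv\le0$. Also $u\le v\le v+\eta(t+T+1)$ on $\Omega^c$ and at $t=-T$. The paper uses exactly $v^\eta=v+\eta(t+T+1)$. Since $v^\eta\ge v\ge0$ and $L(v^\eta)_\varphi\le0$, one gets
\[
\partial_t\varphi\ge(v^\eta)^\gamma L(v^\eta)_\varphi+\eta,
\]
a gain that does not depend on the size of $v$. Alternatively, you could take strictness from the exterior. Since $p\ge m>0$ and $u-v\le0$ on the positive-measure set $\Omega^c$, the large-jump term gives $\limsup(LU_\rho-LV_\rho)\le-c\,m<0$. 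You did not invoke either mechanism.

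\textbf{The main-obstacle paragraph.} Your description there is also inaccurate. At the doubled points, $p=u(x_\delta,t_\delta)$ and $q=v_\varepsilon(y_\delta,s_\delta)$ satisfy $p-q\to m>0$, so $p^\gamma-q^\gamma$ does not tend to zero. Moreover, there is no a priori bound on $LU_\rho$ and $LV_\rho$ separately; only their difference is controlled from above.

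The step that actually closes the argument, as in the paper, is the decomposition
\[
p^\gamma LU_\rho-q^\gamma LV_\rho=p^\gamma\bigl(LU_\rho-LV_\rho\bigr)+\bigl(p^\gamma-q^\gamma\bigr)LV_\rho\le p^\gamma\bigl(LU_\rho-LV_\rho\bigr).
\]
This uses $p>q\ge0$ and $LV_\rho\le0$, the latter being the elliptic inequality tested with the same lower patch, which you correctly identified. One then applies the doubling estimate $\limsup(LU_\rho-LV_\rho)\le0$, needing only that $p$ is bounded. No regularity of $s\mapsto s^\gamma$ enters, so the H\"older-versus-Lipschitz issue you flag is not the real difficulty. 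What needs care is where the strict inequality comes from.
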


\begin{Remark}\label{remcomparison}
The conclusion also holds if the sign assumption is replaced by $Lu(\cdot,t)\leq0$ in $\Omega$. The proof is given in Appendix~\ref{app:comparison}.
\end{Remark}

\begin{Proposition}[Comparison for degenerate extremal equations]\label{com for weight}
Let $D$ be a bounded open set and let $a\in C(\overline D)$ be positive in $D$, with $a^{1/(2\sigma)}$ locally Lipschitz in $D$. Let $u,v$ satisfy \eqref{eq:tail-class} and be continuous on $\overline D\times[-T,0]$. Suppose, for $M=M^-$ or $M=M^+$ and a constant $F\in\R$, that
\[
\partial_tu-a(x)Mu\leq F,\qquad
\partial_tv-a(x)Mv\geq F\quad\text{in }D\times(-T,0],
\]
and that $u\leq v$ initially in $D$ and on $D^c\times[-T,0]$. Then $u\leq v$ in $\overline D\times[-T,0]$. No sign condition on $u$ or $v$ is required.
\end{Proposition}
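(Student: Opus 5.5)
We argue by contradiction via a doubling-of-variables argument adapted to the degenerate weight $a(x)$. Assume $\theta:=\max_{\overline D\times[-T,0]}(u-v)>0$. First we reduce to a strict inequality and keep the maximum away from $t=0$ issues: for small $\varepsilon>0$ we replace $u$ by $u_\varepsilon=u-\varepsilon(t+T)-\varepsilon/(T+t+T)$-type corrections. Concretely we take $u_\varepsilon=u-\varepsilon(t+T)$. Since $M(u-c(t))=Mu$ for spatial constants, it satisfies $\partial_tu_\varepsilon-a(x)Mu_\varepsilon\le F-\varepsilon$. Outside $D$ and at $t=-T$ we still have $u_\varepsilon\le v$. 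Hence the positive maximum of $u_\varepsilon-v$ is attained at some $(x_0,t_0)$ with $x_0\in D$ and $t_0>-T$. The point cannot lie on $\partial D$: by continuity on $\overline D\times[-T,0]$ and $u\le v$ on $D^c$, we get $u_\varepsilon\le v$ on $\partial D$.

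Next we double variables and use a penalization adapted to $a$. We introduce $\Phi(x,y,t,s)=u_\varepsilon(x,t)-v(y,s)-\frac{|x-y|^2}{2\eta}-\frac{(t-s)^2}{2\eta}$ over $(\overline D)^2\times[-T,0]^2$. Its maximizers $(x_\eta,y_\eta,t_\eta,s_\eta)$ satisfy the standard facts: they converge along a subsequence to a maximum point of $u_\varepsilon-v$, $|x_\eta-y_\eta|^2/\eta\to0$, and they stay in a compact $K\Subset D$ with times $>-T$ for $\eta$ small. We then apply the nonlocal parabolic Jensen--Ishii lemma, as in \cite{luis,Chang2014,changstability}, splitting each nonlocal operator into a small ball $B_\rho$, where the test function acts, and its complement, where $u,v$ act. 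The tail condition \eqref{eq:tail-class} makes all integrals finite. We obtain
\[
p_t-a(x_\eta)M(\ldots)(x_\eta)\le F-\varepsilon,\qquad p_t-a(y_\eta)M(\ldots)(y_\eta)\ge F .
\]
Subtracting, the time derivatives cancel, which leaves $\varepsilon\le a(x_\eta)I_1-a(y_\eta)I_2$. Here $I_1,I_2$ are the nonlocal quantities at the two points.

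The main obstacle is that the coefficients $a(x_\eta)\ne a(y_\eta)$ differ and $M$ is nonlinear. We handle this by scaling: write $a=b^{2\sigma}$ with $b=a^{1/(2\sigma)}$ Lipschitz on $K$. By the homogeneity of the kernels in $\mathcal L_*$, $b(x)^{2\sigma}Lf(x)=\int\delta f(x,b(x)z)\mu(z/|z|)|z|^{-n-2\sigma}dz$. So the weight becomes a change of variables $z\mapsto b(x)z$ in the increments. We compare the increments $\delta u(x_\eta,b(x_\eta)z)-\delta v(y_\eta,b(y_\eta)z)$ using the maximality of $\Phi$ at the shifted points $(x_\eta\pm b(x_\eta)z,\;y_\eta\pm b(y_\eta)z)$. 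This gives the pointwise bound $\le\frac{|b(x_\eta)-b(y_\eta)|^2|z|^2}{\eta}\le C\frac{|x_\eta-y_\eta|^2}{\eta}|z|^2$ for $|z|\le r_0$, in the spirit of the Ishii--Lions treatment of Lipschitz coefficients. For large $|z|$ we use the same maximality inequality, and for points leaving $D$ we use $u_\varepsilon\le v$ outside $D$, which is consistent with the maximum principle structure. Since $M$ is a sup or inf over the same family $\mathcal L_*$, we use $\sup L f-\sup L g\le\sup L(f-g)$ with a common $\mu$ after the change of variables. This yields $a(x_\eta)I_1-a(y_\eta)I_2\le C|x_\eta-y_\eta|^2/\eta+o(1)$, where the $o(1)$ comes from the $B_\rho$ part as $\rho\to0$ and from uniform continuity on compacts together with the tails (dominated convergence).

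Letting $\eta\to0$ gives $\varepsilon\le0$, a contradiction. Hence $u_\varepsilon\le v$ for every $\varepsilon$, and letting $\varepsilon\to0$ proves $u\le v$. No sign condition on $u$ or $v$ is used anywhere, since neither the equation nor the argument involves powers of $u$ or $v$.
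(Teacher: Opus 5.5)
Your architecture is the paper's: a strict perturbation, doubling with quadratic penalization, the dilation $z\mapsto a(x)^{1/(2\sigma)}z$ turning the weight into a Lipschitz rescaling of the increments, the Ishii--Lions bound $|b(x_\eta)-b(y_\eta)|^2|z|^2/\eta$ for moderate jumps, and $\rho\downarrow0$ for the patched part. Perturbing $u$ rather than $v$, and treating $M^\pm$ together via $\inf_\mu A_\mu-\inf_\mu B_\mu\le\sup_\mu(A_\mu-B_\mu)$ (and likewise with $\sup$), are harmless variations.

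The gap is in the large jumps. Once $x_\eta\pm b(x_\eta)z$ or $y_\eta\pm b(y_\eta)z$ leaves $D$, the maximality of $\Phi$ is unavailable, since it is a maximum only over $\overline D\times\overline D$. The exterior ordering $u_\varepsilon\le v$ compares values at the same point and time. You instead need to bound $u_\varepsilon(x_\eta+b(x_\eta)z,t_\eta)-v(y_\eta+b(y_\eta)z,s_\eta)$, which involves different points and different times. Appealing to ``uniform continuity on compacts'' and dominated convergence does not repair this. The functions $u,v$ are continuous only on $\overline D\times[-T,0]$ and are merely measurable, possibly unbounded, outside, so there is no pointwise convergence to exploit. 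You also use \eqref{eq:tail-class} only for finiteness of the integrals, while its time-continuity part is exactly what this step requires.

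The missing argument, as in the paper, runs as follows. Fix $h>0$ so that the balls of radii $2b(x_\eta)h$ and $2b(y_\eta)h$ about the doubling points stay in $D$, and use maximality only for $\rho\le|z|<h$. For $|z|\ge h$, pass to the limit inside the integral:
\begin{itemize}
\item the weighted $L^1$ time continuity in \eqref{eq:tail-class} replaces $t_\eta$ and $s_\eta$ by the limit time $t_0$;
\item continuity of affine changes of variables in weighted $L^1$ replaces the shifted points by $x_0\pm b(x_0)z$, by approximating the fixed functions $u(\cdot,t_0)$ and $v(\cdot,t_0)$ with $C_c$ functions.
\end{itemize}
Since $\mu\le\Lambda$, this shows, uniformly in $\mu$, that the large-jump difference converges to $\int_{|z|\ge h}\delta(u_\varepsilon-v)(x_0,b(x_0)z,t_0)\,\mu(z/|z|)|z|^{-n-2\sigma}\,dz$. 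This limit is $\le0$, because $(u_\varepsilon-v)(\cdot,t_0)\le(u_\varepsilon-v)(x_0,t_0)$ almost everywhere on all of $\R^n$: by maximality on $\overline D$ and by the exterior ordering outside. Only with this evaluation at the limit point do you get $\varepsilon\le C|x_\eta-y_\eta|^2/\eta+o(1)$. The order of limits matters: take $\rho\downarrow0$ first at fixed $\eta$, since the patched contribution is of order $\rho^{2-2\sigma}/\eta$.
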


\begin{Corollary}[Half-space and distance weights]\label{com for dis}
Proposition~\ref{com for weight} applies with $a(x)=x_n^{\sigma\gamma}$ in every bounded open $D\subset\{x_n>0\}$, and with $a(x)=d(x)^{\sigma\gamma}$ in every bounded open $D\subset\Omega$.
\end{Corollary}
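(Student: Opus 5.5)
The corollary follows once the two hypotheses on the weight in Proposition~\ref{com for weight} are checked for each choice. In both cases $D$ is a bounded open set and we need three things: $a\in C(\overline D)$, $a>0$ in $D$, and $a^{1/(2\sigma)}$ locally Lipschitz in $D$. The remaining hypotheses on $u,v$ and $F$ are inherited verbatim from the proposition. So the proof is a direct verification, and no new comparison argument is needed.

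For the half-space weight $a(x)=x_n^{\sigma\gamma}$ on a bounded open $D\subset\{x_n>0\}$, the checks are immediate:
\begin{itemize}
\item $a$ is continuous on $\overline D\subset\{x_n\ge0\}$, because $t\mapsto t^{\sigma\gamma}$ is continuous on $[0,\infty)$. The weight may vanish on $\partial D\cap\{x_n=0\}$, which the proposition permits, since positivity is only required in $D$.
\item $a>0$ in $D$.
\item $a^{1/(2\sigma)}=x_n^{\gamma/2}$ is smooth on $\{x_n>0\}$, hence Lipschitz on every compact subset of $D$, where $x_n$ is bounded below by a positive constant.
\end{itemize}

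For the distance weight $a(x)=d(x)^{\sigma\gamma}$ on a bounded open $D\subset\Omega$, the checks are:
\begin{itemize}
\item $d$ is $1$-Lipschitz on $\R^n$, hence continuous, so $a=d^{\sigma\gamma}$ is continuous on $\overline D\subset\overline\Omega$.
\item $d>0$ in $\Omega\supset D$, so $a>0$ in $D$.
\item $a^{1/(2\sigma)}=d^{\gamma/2}$, and the map $s\mapsto s^{\gamma/2}$ is smooth, in particular Lipschitz, on $[\delta,\infty)$ for every $\delta>0$. On a compact $K\subset D$ we have $d\ge\delta_K>0$, and composing this map with the $1$-Lipschitz function $d$ shows that $d^{\gamma/2}$ is Lipschitz on $K$.
\end{itemize}
No regularity of $\partial\Omega$ is needed for this verification. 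Only the positivity of $d$ inside $\Omega$ and its Lipschitz property are used, and the $C^{1,1}$ assumption plays no role here.

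The only point deserving care is that the Lipschitz condition is local. Near $\partial\{x_n>0\}$ or $\partial\Omega$ the function $s^{\gamma/2}$ with $\gamma<1$ fails to be Lipschitz at $s=0$. This is harmless because Proposition~\ref{com for weight} only requires local Lipschitz continuity in the open set $D$, together with continuity up to $\overline D$ and no positivity on $\partial D$. With the hypotheses verified, applying Proposition~\ref{com for weight} for $M=M^-$ or $M=M^+$ gives the conclusion $u\le v$ in $\overline D\times[-T,0]$ in both cases.
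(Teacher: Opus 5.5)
Your proposal is correct and follows essentially the same route as the paper. The paper also observes that the relevant distance is bounded below near each point of $D$, so $x_n^{\gamma/2}$ and $d^{\gamma/2}$ are locally Lipschitz there; your explicit checks of continuity on $\overline D$ and positivity in $D$, and your remark that the $C^{1,1}$ regularity of $\Omega$ plays no role, are accurate additions to that same verification.
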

\begin{proof}
In a sufficiently small ball about any point of $D$, the relevant distance is bounded below by a positive constant. Both $x_n^{\gamma/2}$ and $d^{\gamma/2}$ are Lipschitz on that ball, since $d$ is Lipschitz. The hypotheses of the proposition are therefore satisfied.
\end{proof}

The proofs of these comparison principles in Appendix~\ref{app:comparison} use local doubling of variables and weighted $L^1$ convergence for the exterior integrals. 

\section{Elliptic barriers and two-sided boundary bounds}\label{sec:Lip}
\subsection{The nonlinear elliptic profile}
We first recall two auxiliary linear Dirichlet problems. The solution $h$ of
\begin{equation}\label{Lh=-1}
Lh=-1\quad\text{in }\Omega,\qquad h=0\quad\text{in }\Omega^c
\end{equation}
is continuous, positive in $\Omega$, and satisfies
\[
c d(x)^\sigma\le h(x)\leq C d(x)^\sigma\quad\text{in }\Omega.
\]
Existence, comparison, and the boundary bound follow from the linear Dirichlet theory in \cite{luis,ros2016boundary,RosOtonSerra2016Stable}. We next record the properties of the first eigenfunction used here and in the blow-up argument.

\begin{Lemma}[The first eigenfunction]\label{firsteigenfunction}
There is a positive first eigenpair $(\lambda_1,\psi)$ for $-L$ with zero exterior condition, normalized by $\|\psi\|_{L^2(\Omega)}=1$. It satisfies
\begin{equation}\label{eigenvalue}
-L\psi=\lambda_1\psi\quad\text{in }\Omega,\qquad \psi=0\quad\text{in }\Omega^c,
\end{equation}
and
\[
c d^\sigma\leq\psi\leq C d^\sigma,\quad
\psi\in C^\sigma(\R^n)\cap C^{3\sigma-\epsilon}_{loc}(\Omega),\quad
\psi/d^\sigma\in C^\beta(\overline\Omega)
\]
for every $0<\beta<\sigma$ and every small $\epsilon>0$ whenever $3\sigma-\epsilon$ is not an integer. The eigenvalue is bounded above and below by positive constants depending only on $n,\sigma,\lambda,\Lambda$ and $\Omega$. The constants in the displayed estimates have the same dependence, and the H\"older norm also depends on $\beta$.
\end{Lemma}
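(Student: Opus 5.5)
We obtain the eigenpair from the symmetric Dirichlet form and then read off the regularity from the linear theory recalled for \eqref{Lh=-1}. Since $\mu$ is even and bounded between $\lambda$ and $\Lambda$, $-L$ comes from the quadratic form
\[
\mathcal E(f,f)=\tfrac14\iint \bigl(f(x)-f(y)\bigr)^2\,\frac{\mu\bigl((x-y)/|x-y|\bigr)}{|x-y|^{n+2\sigma}}\,dx\,dy
\]
on $H^\sigma_0(\Omega)=\{f\in H^\sigma(\R^n): f=0 \text{ in }\Omega^c\}$. This form is comparable to the Gagliardo seminorm, with constants depending only on $n,\sigma,\lambda,\Lambda$. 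The embedding $H^\sigma_0(\Omega)\hookrightarrow L^2(\Omega)$ is compact, so the Rayleigh quotient $\mathcal E(f,f)/\|f\|_{L^2}^2$ attains its infimum $\lambda_1$. Comparing with the fractional Laplacian gives $c\,\lambda_1((-\Delta)^\sigma,\Omega)\le\lambda_1\le C\,\lambda_1((-\Delta)^\sigma,\Omega)$, which proves the two-sided eigenvalue bound. Because $\mathcal E(|f|,|f|)\le\mathcal E(f,f)$, we may take the minimizer $\psi\ge0$ and normalize it by $\|\psi\|_{L^2(\Omega)}=1$. It then solves \eqref{eigenvalue} weakly.

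Next we need $L^\infty$ bounds and positivity. De Giorgi/Moser iteration for the weak equation $-L\psi=\lambda_1\psi$, or a bootstrap through the Green operator of $L$, gives $\|\psi\|_{L^\infty}\le C$; this uses $n>2\sigma$ in the Sobolev step, or a Riesz-potential bootstrap in general. The right-hand side $f=\lambda_1\psi$ is therefore bounded, and weak solutions with bounded right-hand side are viscosity solutions (Ros-Oton--Serra, Servadei--Valdinoci). The strong maximum principle for nonnegative supersolutions of nonlocal equations then forces $\psi>0$ in $\Omega$, since the zero exterior values and $\psi\not\equiv0$ leave no other option.

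For the boundary comparison, the upper bound comes from $|L\psi|\le \lambda_1\|\psi\|_\infty$ together with the barrier $Ch$, with $h$ from \eqref{Lh=-1}: comparison gives $\psi\le \lambda_1\|\psi\|_\infty h\le Cd^\sigma$. The lower bound is the step I expect to be the main obstacle, because it needs quantitative positivity with constants depending only on the structural data. I would get it in three moves. First, since $\|\psi\|_{L^2}=1$ and $\|\psi\|_\infty\le C$, $\psi$ has mass of order one on some set, and the interior Harnack inequality (the elliptic version of \eqref{eq:interior-harnack}) spreads this to give $\psi\ge c_0$ on $\{d\ge\delta_0\}$. Second, $L\psi=-\lambda_1\psi\le0$ in $\Omega$. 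Third, the standard subsolution barrier of Ros-Oton--Serra, of the form $\varphi=c\,(h-\text{cutoff})$, which satisfies $L\varphi\ge0$ near $\partial\Omega$ for $L\in\mathcal L_*$ in $C^{1,1}$ domains, yields $\psi\ge c\,d^\sigma$ by comparison in the strip $\{d<\delta_0\}$. The compactness constants should be tracked through a contradiction argument over the class $\mathcal L_*$, which is compact under weak-$*$ convergence of $\mu$.

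Finally, regularity. Bounded right-hand side and zero exterior data give $\psi\in C^\sigma(\R^n)$ and $\psi/d^\sigma\in C^{\beta}(\overline\Omega)$ for every $\beta<\sigma$; this is exactly the boundary Harnack/quotient estimate of Ros-Oton--Serra \cite{ros2016boundary} for $\mathcal L_*$ in $C^{1,1}$ domains, with the norm depending on $\beta$. Interior regularity comes from bootstrapping. Since $L\psi=-\lambda_1\psi\in C^\sigma_{loc}$, the interior Schauder estimate for translation-invariant stable operators gives $\psi\in C^{2\sigma+\sigma-\epsilon}_{loc}=C^{3\sigma-\epsilon}_{loc}(\Omega)$, whenever $3\sigma-\epsilon$ is not an integer. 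The loss of $\epsilon$ accounts for the merely $C^\sigma$ global regularity entering through the tail.
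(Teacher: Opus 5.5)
Your outline follows the paper's route. The paper itself only cites Section~5 of \cite{BFV2018} for existence and $\psi\asymp d^\sigma$, Theorem~1.2 of \cite{RosOtonSerra2016Stable} for boundedness and the quotient estimate, and a comparison of quadratic forms for the bounds on $\lambda_1$. You supply the standard arguments behind those citations, and one step of your lower bound needs repair.

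The two-sided Harnack inequality \eqref{eq:interior-harnack} carries the forcing $F=\lambda_1\|\psi\|_\infty$, which is of order one. Chaining it across $\{d\geq\delta_0\}$ therefore does not produce a positive lower bound: shrinking the balls makes the forcing small but lengthens the chain, and the multiplicative losses then win. You should use only the supersolution property $L\psi\leq0$, either through the weak Harnack inequality or more directly as follows. From $\|\psi\|_{L^2}=1$, the uniform $C^\sigma$ bound and $\psi\leq Cd^\sigma$, there is a ball $B\Subset\Omega$ of controlled size on which $\psi\geq c$. Compare $\psi$ with $w=\epsilon h+\tfrac c2\chi_B$ in $\Omega\setminus\overline B$. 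There
\[
Lw\geq-\epsilon+\tfrac c2\lambda|B|(\operatorname{diam}\Omega)^{-n-2\sigma}\geq0\geq L\psi
\]
for small $\epsilon$, and $w\leq\psi$ outside $\Omega\setminus\overline B$. This gives $\psi\geq\epsilon h\geq c'd^\sigma$ in one step.

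This is also the correct form of the barrier you describe: it is $h$ plus a nonnegative interior bump, not $h$ minus a cutoff. Subtracting a nonnegative function supported away from $x$ lowers $L$ at $x$, so it would give a supersolution rather than a subsolution.

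A minor point: the Dirichlet form of $-L$ is $\tfrac12\iint$, not $\tfrac14\iint$.
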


The existence and boundary behavior of the first eigenfunction follow from the spectral and boundary theory in Section~5 of \cite{BFV2018}. The quotient estimate follows by applying Theorem~1.2 of \cite{RosOtonSerra2016Stable} to \eqref{eigenvalue}; boundedness of the normalized eigenfunction follows from the same elliptic theory. The eigenvalue bounds also follow directly by comparing the quadratic form of $-L$ with that of the fractional Laplacian. We refer to these works for the proofs.

\begin{Lemma}[Elliptic comparison principle]\label{elliptic comparison}
Let $v,w:\R^n\to[0,\infty)$ be continuous on $\overline\Omega$ and integrable with weight $(1+|x|^{n+2\sigma})^{-1}$ in $\R^n$. Suppose $v\not\equiv0$ in $\Omega$,
\[
Lv+\frac{1-\gamma}{\gamma}v^{1-\gamma}\leq0
\leq Lw+\frac{1-\gamma}{\gamma}w^{1-\gamma}
\quad\text{in }\Omega,
\]
in the viscosity sense, $v\geq w$ on $\Omega^c$, and $v\leq c d^\sigma$ in $\Omega$ for some $c>0$. Then $v\geq w$ in $\Omega$.
\end{Lemma}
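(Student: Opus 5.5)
The plan is a scaling (Brezis--Oswald type) argument that exploits the sublinearity of $s\mapsto \frac{1-\gamma}{\gamma}s^{1-\gamma}$. Write $k=\frac{1-\gamma}{\gamma}$. For $t\geq1$ we have
\[
L(tv)+k(tv)^{1-\gamma}\leq k\bigl(t^{1-\gamma}-t\bigr)v^{1-\gamma}\leq0
\quad\text{in }\Omega.
\]
So $tv$ is again a supersolution, and it is a \emph{strict} one when $t>1$, with defect $k(t-t^{1-\gamma})v^{1-\gamma}$. The goal is to set
\[
\tau=\inf\{t\geq1:\ tv\geq w \text{ in }\R^n\}
\]
and show that $\tau=1$.

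\textbf{Step 1 (two-sided control, so that $\tau<\infty$).} Since $v\geq0$ and $Lv\leq -kv^{1-\gamma}\leq0$ with $v\not\equiv0$ in $\Omega$, the strong maximum principle and the Hopf-type lower barrier from the linear theory of \cite{ros2016boundary,RosOtonSerra2016Stable} give $v\geq c_0d^\sigma$ in $\Omega$. Combined with the hypothesis $v\leq cd^\sigma$, we get $v\asymp d^\sigma$.

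For $w$: it is bounded on $\overline\Omega$, so $Lw\geq -k\|w\|_\infty^{1-\gamma}$ in $\Omega$. Compare $w$ with $C h+\tilde w$, where $h$ solves \eqref{Lh=-1} and $\tilde w$ is the $L$-harmonic function in $\Omega$ with exterior data $w\leq v$. I would first check that the exterior data can be reduced to the case where the boundary values vanish. Continuity of $v$ on $\overline\Omega$ together with $v\asymp d^\sigma$ forces $v=0$ on $\partial\Omega$, hence $w\leq v$ there as well. One then needs $\tilde w\leq C d^\sigma$. This should follow from comparing $\tilde w$ with $Cv$ near $\partial\Omega$, since $v$ is superharmonic with the same exterior ordering. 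The outcome is $w\leq Cd^\sigma\leq C'v$, so $\tau<\infty$.

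\textbf{Step 2 (contradiction if $\tau>1$).} Suppose $\tau>1$ and put $z=\tau v-w\geq0$ in $\R^n$. By the definition of $\tau$, $\inf_\Omega z/v=0$, while $z\geq(\tau-1)v\geq0$ on $\Omega^c$. Subtracting the inequalities for $\tau v$ and $w$ (via the viscosity difference inequality) gives
\[
M^-z\leq -k(\tau-\tau^{1-\gamma})v^{1-\gamma}+k\bigl(w^{1-\gamma}-(\tau v)^{1-\gamma}\bigr)
\leq -k(\tau-\tau^{1-\gamma})v^{1-\gamma}.
\]
The last step uses $w\leq\tau v$. If the infimum of $z/v$ is attained at an interior point, then $z$ touches $0$ from above there. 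This is impossible, because $M^-z\geq0$ at a minimum point where $z=0$ and $z\geq0$ everywhere, while the right-hand side is strictly negative.

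\textbf{Step 3 (main obstacle: boundary contact).} The remaining case is when the infimum of $z/v$ is approached only along a sequence tending to $\partial\Omega$. This is where the upper bound $v\leq cd^\sigma$ is genuinely used. The forcing satisfies
\[
k(\tau-\tau^{1-\gamma})v^{1-\gamma}\geq c_1 d^{\sigma(1-\gamma)},
\]
which is large compared with the natural size $d^{-\sigma}\cdot d^{\sigma}$ of $M^-(d^\sigma)$. We therefore have $M^-z\leq -c_1d^{\sigma(1-\gamma)}$, with $z\geq0$ in $\R^n$ and $z\geq(\tau-1)v\not\equiv0$ outside $\Omega$. The Hopf lemma for supersolutions with a negative right-hand side (the barrier $h$ of \eqref{Lh=-1}, together with comparison in a boundary layer $\{d<\delta\}$) then yields $z\geq c_2 h\geq c_3 d^\sigma\geq c_4 v$ in $\Omega$. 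This contradicts $\inf_\Omega z/v=0$.

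The delicate point is producing this uniform Hopf lower bound in a $C^{1,1}$ domain with merely continuous $z$. I would first try comparison of $z$ with $\epsilon h$ in the layer $\{d<\delta\}$: on $\{d=\delta\}$ positivity of $z$ comes from Step 2, and in the layer $M^-z\leq -c_1 d^{\sigma(1-\gamma)}\leq -\epsilon$ there. Once this bound is in place, $\tau=1$, that is, $v\geq w$.
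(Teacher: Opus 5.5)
Your scaling argument is the paper's own, reparametrized. The paper takes $t^*=\sup\{t\in[0,1]:v\ge tw\text{ on }\R^n\}$ and $f=v-t^*w$, so your $\tau=1/t^*$ and $z=f/t^*$. Your Steps 1 and 2 reproduce its Steps 1--3 up to the boundary step. For $w\le Cd^\sigma$ the paper avoids your $L$-harmonic $\tilde w$, whose existence with merely weighted-$L^1$ exterior data would need justification: it uses $L(w-v)\ge -k\|w\|_{L^\infty(\Omega)}^{1-\gamma}$ and compares $w-v$ with a multiple of $h$.

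\textbf{The gap is in Step 3}, which is where the contradiction is actually produced. You assert $M^-z\le -c_1d^{\sigma(1-\gamma)}\le-\epsilon$ on the whole layer $\{d<\delta\}$ and then compare $z$ with $\epsilon h$. Since $d^{\sigma(1-\gamma)}\to0$ at $\partial\Omega$, no $\epsilon>0$ satisfies this near the boundary. The forcing degenerates exactly where the Hopf bound is needed; it is small, not large, compared with $O(1)$. Because $h$ is a supersolution, $L(z-\epsilon h)\le -c_1d^{\sigma(1-\gamma)}+\epsilon$ is nonpositive only on $\{d\ge(\epsilon/c_1)^{1/(\sigma(1-\gamma))}\}$. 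So the minimum principle cannot be applied in any neighborhood of $\partial\Omega$. Working with $M^-$ instead of the fixed $L$ is worse still, since $h$ only solves $Lh=-1$ and you have no lower bound for $M^-h$. Separately, the claim $(\tau-1)v\not\equiv0$ in $\Omega^c$ is false in the applications ($v=Ah$ and $v=\Psi$ vanish there), though you do not need it.

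\textbf{The repair} is that the size of the forcing is irrelevant at the boundary; what you need is a strict \emph{sub}solution barrier. From Step 2 you have $z\ge0$ in $\R^n$, $M^-z\le0$ in $\Omega$, and $z>0$ in $\Omega$, since an interior zero contradicts the strict inequality. These are exactly the hypotheses under which you obtained $v\ge c_0d^\sigma$ in Step 1, and the paper applies that same lemma to $f$. Concretely:
\begin{enumerate}
\item Take a continuous $\varphi_2\ge0$, vanishing in $\Omega^c$, with $M^-\varphi_2\ge c>0$ and $\varphi_2\ge cd^\sigma$ in a boundary layer $\Omega_\rho$. It is built from translated and scaled copies of the barrier of Lemma 3.4 of Ros-Oton and Serra in interior tangent balls, taking their maximum.
\item Choose $a>0$ with $a\varphi_2\le z$ off $\Omega_\rho$. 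This is possible because $\min_{\{d\ge\rho\}}z>0$ and $\varphi_2=0\le z$ in $\Omega^c$.
\item Compare to get $z\ge a\varphi_2\ge c_z d^\sigma$ in $\Omega_\rho$, hence $z\ge c_z' d^\sigma$ in $\Omega$.
\item By the hypothesis $v\le cd^\sigma$, this gives $z\ge (c_z'/c)v$.
\end{enumerate}
Then $\inf_\Omega z/v>0$, contradicting Step 2. With this replacement for your Step 3, the proof closes.
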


\begin{proof}
\textbf{Step 1. }
We first give the strong maximum principle argument needed below. Suppose $f\geq0$ on $\R^n$, $f\not\equiv0$ in $\Omega$, and $M^-f\leq0$ in $\Omega$. If $f(x_0)=0$ at an interior point, the zero function touches from below. Patching by zero on $B_\rho(x_0)$ gives
\[
0\geq\inf_\mu\int_{|z|>\rho}
\frac{f(x_0+z)+f(x_0-z)}2
\frac{\mu(z/|z|)}{|z|^{n+2\sigma}}\,dz
\geq\lambda\int_{|z|>\rho}
\frac{f(x_0+z)+f(x_0-z)}{2|z|^{n+2\sigma}}\,dz.
\]
Thus $f=0$ almost everywhere, and continuity in $\Omega$ contradicts nontriviality there. Hence $f>0$ in $\Omega$.

Translating and scaling the barrier in Lemma~3.4 of
\cite{ros2016boundary} in uniform interior tangent balls, and
taking their maximum, gives a bounded continuous nonnegative
function $\varphi_2$, vanishing in $\Omega^c$, such that
\[
M^-\varphi_2\geq c>0,\qquad \varphi_2\geq c\,d^\sigma
\quad\text{in }\Omega_\rho:=\{x\in\Omega:d(x)<\rho\}
\]
for some $\rho>0$. Since
$\min_{\Omega\setminus\Omega_\rho}f>0$, we may choose $a>0$
sufficiently small that
$a\varphi_2\leq f$ in $\mathbb R^n\setminus\Omega_\rho$.
The comparison principle then gives
\[
f\geq a\varphi_2\geq ac\,d^\sigma
\quad\text{in }\Omega_\rho.
\]
Together with positivity on the remaining compact interior set,
this yields $f\geq c_1d^\sigma$ in $\Omega$, where $c_1>0$
depends on $f$.

\textbf{Step 2. }
Applying Step~1 to $v$ gives $v\geq c_vd^\sigma$. Set
\[
C_*:=\frac{1-\gamma}{\gamma}\|w\|_{L^\infty(\Omega)}^{1-\gamma}.
\]
This local norm is finite by continuity on $\overline\Omega$. The viscosity difference inequality for the fixed linear operator gives
\[
L(w-v)\geq-C_*\quad\text{in }\Omega.
\]
Since $w-v\leq0$ in the exterior, comparison with $C_*h$, where $h$ is the solution of \eqref{Lh=-1}, yields
\[
w\leq v+C_*h\leq C_w d^\sigma\quad\text{in }\Omega.
\]
Consequently $v\geq t w$ everywhere for all sufficiently small $t>0$.

\textbf{Step 3.}
If the conclusion fails, define
\[
t^*:=\sup\{t\in[0,1]:v\geq tw\text{ on }\R^n\}.
\]
By Step~2, $t^*>0$; the existence of a point where $v<w$ implies $t^*<1$. The defining inequality also holds at $t=t^*$. Set $f=v-t^*w\geq0$. By the linear viscosity difference inequality,
\begin{align*}
M^-f&\leq Lf\leq\frac{1-\gamma}{\gamma}
\bigl(t^*w^{1-\gamma}-v^{1-\gamma}\bigr)\leq-\frac{1-\gamma}{\gamma}
\bigl(1-(t^*)^\gamma\bigr)v^{1-\gamma}<0
\quad\text{in }\Omega,
\end{align*}
where we use $t^*w\leq v$ in the third inequality. Since $f\ge 0$ globally, we have $f>0$ in $\Omega$, and the boundary part of Step~1 gives $f\geq c_fd^\sigma$.

Choose $0<\epsilon<\min\{1-t^*,c_f/c\}$. Inside $\Omega$, $f\geq\epsilon v$ by the assumed upper bound for $v$. Outside, $v\geq w\geq0$ gives $f\geq(1-t^*)v\geq\epsilon v$. Therefore
\[
v\geq\frac{t^*}{1-\epsilon}w\quad\text{on }\R^n,
\]
where $t^*<t^*/(1-\epsilon)<1$. This contradicts the definition of $t^*$.
\end{proof}

\begin{Theorem}[The elliptic profile]\label{Dirichlet}
For every $L\in\mathcal L_*$, the problem
\begin{equation}\label{eq:ellipticnonlinear}
L\Psi+\frac{1-\gamma}{\gamma}\Psi^{1-\gamma}=0
\quad\text{in }\Omega,\qquad \Psi=0\quad\text{in }\Omega^c
\end{equation}
admits a unique continuous viscosity solution that is positive in $\Omega$. Moreover,
\begin{equation}\label{bounary}
c_0d(x)^\sigma\leq\Psi(x)\leq C_0d(x)^\sigma\quad\text{in }\Omega,
\end{equation}
where $c_0,C_0>0$ depend only on $n,\sigma,\gamma,\lambda,\Lambda$ and $\Omega$.
\end{Theorem}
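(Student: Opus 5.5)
The argument splits into uniqueness, existence via ordered barriers, and the two-sided bound \eqref{bounary}.

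\textbf{Uniqueness.} Let $\Psi_1,\Psi_2$ be two continuous positive solutions. Each is a nonnegative supersolution of $M^-f\le 0$ and is nontrivial, so Step~1 of the proof of Lemma~\ref{elliptic comparison} gives $\Psi_i\ge c\,d^\sigma$. For the upper bound, note that $L\Psi_i=-\frac{1-\gamma}{\gamma}\Psi_i^{1-\gamma}\ge -C$ in $\Omega$, with $C$ depending on $\|\Psi_i\|_{L^\infty(\Omega)}$. Comparison with $Ch$, where $h$ solves \eqref{Lh=-1}, then gives $\Psi_i\le Ch\le C'd^\sigma$. Now Lemma~\ref{elliptic comparison}, applied with $v=\Psi_1$, $w=\Psi_2$, gives $\Psi_1\ge\Psi_2$. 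Exchanging the roles gives equality.

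\textbf{Existence.} I will use Perron's method, or equivalently a monotone iteration, between an ordered pair of barriers.

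For the supersolution take $\overline\Psi=Ah$. Then
\[
L\overline\Psi+\tfrac{1-\gamma}{\gamma}\overline\Psi^{1-\gamma}=-A+\tfrac{1-\gamma}{\gamma}A^{1-\gamma}h^{1-\gamma}\le0
\]
as soon as $A^\gamma\ge\frac{1-\gamma}{\gamma}\|h\|_\infty^{1-\gamma}$. This is possible because $\gamma>0$.

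For the subsolution take $\underline\Psi=\epsilon\psi$, with $\psi$ the eigenfunction of Lemma~\ref{firsteigenfunction}. We need
\[
-\lambda_1\epsilon\psi+\tfrac{1-\gamma}{\gamma}\epsilon^{1-\gamma}\psi^{1-\gamma}\ge0,
\quad\text{i.e.}\quad \epsilon^\gamma\psi^\gamma\le\tfrac{1-\gamma}{\gamma\lambda_1},
\]
which holds for small $\epsilon$ since $\psi$ is bounded. Because $h$ and $\psi$ are both comparable to $d^\sigma$, we may shrink $\epsilon$ so that $\underline\Psi\le\overline\Psi$.

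The iteration is then
\[
\Psi_{k+1}=\text{solution of } -L\Psi_{k+1}+K\Psi_{k+1}=K\Psi_k+\tfrac{1-\gamma}{\gamma}\Psi_k^{1-\gamma},\qquad \Psi_{k+1}=0 \text{ in }\Omega^c,
\]
started at $\Psi_0=\overline\Psi$. The map $s\mapsto Ks+\frac{1-\gamma}{\gamma}s^{1-\gamma}$ is not monotone near $0$, since its derivative blows up there. I therefore plan to use Perron's method instead. Set $\Psi$ to be the supremum of all viscosity subsolutions lying between $\underline\Psi$ and $\overline\Psi$. Standard nonlocal Perron arguments (Caffarelli--Silvestre) show that $\Psi$ is a viscosity solution; continuity comes from the interior Hölder estimates together with the squeeze $\underline\Psi\le\Psi\le\overline\Psi$ at the boundary.

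The main obstacle is exactly here: the nonlinearity $s^{1-\gamma}$ is non-Lipschitz at $0$, so the usual bump construction in Perron's method needs care. It stays safe because $\Psi\ge\underline\Psi>0$ in $\Omega$, so the nonlinearity is locally Lipschitz on the range that actually occurs. If Perron proves awkward, an alternative is to solve a regularized problem in which $\Psi^{1-\gamma}$ is replaced by $(\Psi+\delta)^{1-\gamma}-\delta^{1-\gamma}$, which is Lipschitz, by a fixed point argument. The uniform barriers give compactness, and stability of viscosity solutions passes to the limit $\delta\to0$.

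\textbf{Uniform bounds.} The estimate \eqref{bounary} follows from the squeeze
\[
\epsilon\psi\le\Psi\le Ah.
\]
Here $A$ depends only on $\gamma$ and $\|h\|_\infty$, and $\epsilon$ depends only on $\lambda_1$, $\|\psi\|_\infty$, and the comparability constants of $\psi$ and $h$ with $d^\sigma$. All of these depend only on $n,\sigma,\lambda,\Lambda,\Omega$ by Lemma~\ref{firsteigenfunction} and the linear theory. Since the solution is unique, the Perron solution is the solution, and the constants $c_0,C_0$ have the stated dependence.
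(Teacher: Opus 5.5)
Your uniqueness argument and the barrier bounds are correct and are essentially the paper's. The paper also applies Lemma~\ref{elliptic comparison} with the barriers $Ah$ and $B\psi$, and your route to $\Psi_i\le C_*h$ is just Step~2 of that lemma. For existence the paper does not use Perron: it takes the weak dual solution of Theorem~6.2 in \cite{BFV2018} and upgrades it to a continuous viscosity solution through linear boundary regularity.

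Your Perron step has a genuine gap. Perron only gives that the upper envelope $\Psi^*$ is a subsolution and the lower envelope $\Psi_*$ is a supersolution, both trapped between $\epsilon\psi$ and $Ah$. Continuity cannot be extracted from interior H\"older estimates. Those estimates (Theorem~\ref{thm:interior-estimates}, or Caffarelli--Silvestre) apply to a single continuous function satisfying both extremal inequalities, and that is exactly what is unknown. What you need is $\Psi^*\le\Psi_*$, i.e.\ a comparison principle between a semicontinuous sub- and supersolution. Here that is not the standard one. The term $\frac{1-\gamma}{\gamma}\Psi^{1-\gamma}$ is increasing in $\Psi$, so the equation is not proper, and comparison is available only through the $t^*$ argument of Lemma~\ref{elliptic comparison}, which is stated for continuous functions. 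Without this, the Perron function need not lie in the class where you proved uniqueness, so ``the Perron solution is the solution'' is unsupported. The obstacle you flagged instead, the non-Lipschitz behavior of $s^{1-\gamma}$ at $0$, is a red herring: the bump construction only uses continuity of the nonlinearity.

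The easiest repair is the monotone iteration you discarded, and your reason for discarding it is false. The map $s\mapsto Ks+\frac{1-\gamma}{\gamma}s^{1-\gamma}$ is strictly increasing on $[0,\infty)$, since its derivative tends to $+\infty$, so even $K=0$ works. Take $\Psi_0=Ah$, and let $\Psi_{k+1}$ be the continuous solution of $-L\Psi_{k+1}=\frac{1-\gamma}{\gamma}\Psi_k^{1-\gamma}$ in $\Omega$ with $\Psi_{k+1}=0$ in $\Omega^c$. Linear comparison and monotonicity of $s^{1-\gamma}$ give $\epsilon\psi\le\Psi_{k+1}\le\Psi_k\le Ah$ by induction. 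The right-hand sides are bounded by $A$, so the $\Psi_k$ are uniformly bounded in $C^\sigma(\R^n)$ and converge uniformly. Stability then shows the limit is a continuous solution of \eqref{eq:ellipticnonlinear} with $\epsilon\psi\le\Psi\le Ah$.

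Alternatively, you could extend Lemma~\ref{elliptic comparison} to an upper semicontinuous subsolution and a lower semicontinuous supersolution, with the barriers supplying the $d^\sigma$ bounds it needs, and conclude $\Psi^*\le\Psi_*$. That extension has to be proved, not assumed.
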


\begin{proof}
Theorem~6.2 of \cite{BFV2018} gives a nontrivial weak dual solution, and Theorem~6.3 there gives boundedness. The Green representation of that solution identifies it with the linear weak Dirichlet solution with right-hand side $-\frac{1-\gamma}{\gamma}\Psi^{1-\gamma}$. This right-hand side is bounded, so the linear boundary regularity in \cite{RosOtonSerra2016Stable} gives H\"older continuity up to the boundary. The right-hand side is consequently H\"older continuous as well. Hence, $\Psi\in C^{2\sigma+\alpha}_{loc}(\Omega)$ for some $\alpha>0$. Therefore, it is a viscosity solution. 

We prove the uniform bounds and uniqueness in the viscosity class. Choose $A>0$ so that
\[
A^\gamma\geq\frac{1-\gamma}{\gamma}\|h\|_\infty^{1-\gamma}.
\]
Then
\[
L(Ah)+\frac{1-\gamma}{\gamma}(Ah)^{1-\gamma}
\leq-A+\frac{1-\gamma}{\gamma}A^{1-\gamma}\|h\|_\infty^{1-\gamma}\leq0.
\]
It follows from Lemma~\ref{elliptic comparison} that $\Psi\leq Ah\leq C_0d^\sigma$.

Choose $B>0$ sufficiently small that
\[
\lambda_1(B\|\psi\|_\infty)^\gamma\leq\frac{1-\gamma}{\gamma}.
\]
The eigenfunction equation gives
\[
L(B\psi)+\frac{1-\gamma}{\gamma}(B\psi)^{1-\gamma}
=(B\psi)^{1-\gamma}
\left[\frac{1-\gamma}{\gamma}-\lambda_1(B\psi)^\gamma\right]\geq0.
\]
Since every positive solution satisfies the required upper bound, comparison with $B\psi$ gives $\Psi\geq B\psi\geq c_0d^\sigma$. Finally, applying Lemma~\ref{elliptic comparison} with the two positive solutions in either order proves uniqueness. All constants used above have the stated dependence.
\end{proof}

\subsection{Parabolic barriers and two-sided boundary bounds}
The elliptic profile $\Psi$ from Theorem~\ref{Dirichlet} gives
solutions with separated space and time variables. By choosing
suitable time shifts and applying the comparison principles,
we show that $u(\cdot,t)$ remains bounded above and below by
positive multiples of $d^\sigma$ throughout $[-T,0]$.

\begin{Theorem}\label{thm1}
Under the hypotheses of Theorem~\ref{mainthm}, there are $C_1,C_2>0$, depending only on $n,\sigma,\gamma,\lambda,\Lambda,c_1,c_2,T$ and $\Omega$, such that
\[
C_1d(x)^\sigma\leq u(x,t)\leq C_2d(x)^\sigma
\quad\text{on }\overline\Omega\times[-T,0].
\]
\end{Theorem}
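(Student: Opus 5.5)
Separable barriers built from the elliptic profile $\Psi$ of Theorem~\ref{Dirichlet} give both bounds. For $\tau>0$ set
\[
V_\tau(x,t)=(t+\tau)^{-1/\gamma}\Psi(x).
\]
Then $\partial_tV_\tau=-\frac1\gamma(t+\tau)^{-1/\gamma-1}\Psi$. Moreover $V_\tau^\gamma LV_\tau=(t+\tau)^{-1-1/\gamma}\Psi^\gamma L\Psi$, and $\Psi^\gamma L\Psi=-\frac{1-\gamma}{\gamma}\Psi$. Hence $\partial_tV_\tau-V_\tau^\gamma LV_\tau=-(t+\tau)^{-1-1/\gamma}\Psi<0$ is not exact, so I will rescale. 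Try $V=\kappa(t+\tau)^{-1/\gamma}\Psi$; then $V^\gamma LV=-\kappa^{1+\gamma}\frac{1-\gamma}\gamma(t+\tau)^{-1-1/\gamma}\Psi$. Matching $-\frac{\kappa}{\gamma}$ gives $\kappa^\gamma=1/(1-\gamma)$. With this $\kappa$, $V$ is an exact viscosity solution for $t>-\tau$: it is classical in the interior, since $\Psi\in C^{2\sigma+\alpha}_{loc}$, and it vanishes outside $\Omega$.

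\textbf{Upper bound.} Take $\tau=T$, and more generally rescale in time. We use the family $V_{\kappa',\tau}$, which solves the equation, and need $V(\cdot,-T)\geq u_0$. Letting the shift approach $-T$ makes $V$ arbitrarily large initially. Specifically, $V(x,t)=\kappa(t+T+s)^{-1/\gamma}\Psi(x)$ with $s>0$ small, so that $\kappa s^{-1/\gamma}c_0\geq c_2$ by \eqref{bounary} and \eqref{u_0}. Since $LV\leq0$ in $\Omega$ (as $L\Psi=-\frac{1-\gamma}\gamma\Psi^{1-\gamma}\le0$), Proposition~\ref{conditional} applies with $v=V$. The exterior data agree (both are zero), so $u\leq V\leq \kappa s^{-1/\gamma}C_0d^\sigma$ on $[-T,0]$. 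The constant $s$ depends only on $c_2,c_0,\gamma$.

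\textbf{Lower bound.} Use $W(x,t)=\kappa(t+\tau)^{-1/\gamma}\Psi(x)$ with $\tau$ large, say $\tau\geq T$. Then $W(\cdot,-T)=\kappa(\tau-T)^{-1/\gamma}\Psi\leq \kappa(\tau-T)^{-1/\gamma}C_0d^\sigma\leq c_1d^\sigma\le u_0$ once $\tau$ is chosen large depending on $c_1,C_0,\gamma,T$. Here the sign condition is on the subsolution, $LW\le0$, so we invoke Remark~\ref{remcomparison} with $u:=W$ and $v:=u$. This gives $u\geq W\geq\kappa(\tau)^{-1/\gamma}c_0d^\sigma$ on $[-T,0]$, since $t+\tau\le\tau$ for $t\le0$.

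\textbf{Main obstacle.} The delicate point is the applicability of the conditional comparison principle: the equation $\partial_tu=u^\gamma Lu$ is degenerate and not monotone in $u$, so comparison needs the sign condition, which $\Psi$ supplies. I must also check that $V,W$ are viscosity solutions in the sense of Definition~\ref{defn:vis}. This follows since $\Psi$ is $C^{2\sigma+\alpha}$ in the interior and satisfies \eqref{eq:ellipticnonlinear} pointwise. The constants depend only on $c_0,C_0$ (hence $n,\sigma,\gamma,\lambda,\Lambda,\Omega$) and on $c_1,c_2,T$, as required. Continuity up to $\partial\Omega$ extends the bounds to $\overline\Omega$, where both sides vanish.
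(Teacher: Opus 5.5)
Your proof is correct and follows the paper's argument: both use the separable barriers $(1-\gamma)^{-1/\gamma}(t+T+\tau_\pm)^{-1/\gamma}\Psi$, take the upper comparison from Proposition~\ref{conditional} (sign condition on the supersolution) and the lower one from Remark~\ref{remcomparison} (sign condition on the subsolution), and choose the shifts as in the paper. The only difference is cosmetic: you check that the barrier is a viscosity solution through the interior $C^{2\sigma+\alpha}$ regularity of $\Psi$, whereas the paper divides test functions by the time factor $b(t)$; note also that your lower shift must satisfy $\tau>T$ strictly, not merely $\tau\geq T$.
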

\begin{proof}
Let $\Psi$ be the solution of \eqref{eq:ellipticnonlinear}. For $\tau>0$, define
\[
U_\tau(x,t)=(1-\gamma)^{-1/\gamma}
(t+T+\tau)^{-1/\gamma}\Psi(x).
\]
Writing the scalar time factor as $b(t)$, we have
\[
b'=-\frac{1-\gamma}{\gamma}b^{1+\gamma},\qquad
\partial_tU_\tau=b'\Psi
=b^{1+\gamma}\Psi^\gamma L\Psi
=U_\tau^\gamma LU_\tau.
\]
The same identities hold in the viscosity sense: divide the touching test function by $b(t)$, use the same patch cylinder for both functions, and apply the linearity of $L$. Moreover $LU_\tau<0$ in $\Omega$.

Take
\[
\tau_+=\frac1{1-\gamma}\left(\frac{c_0}{c_2}\right)^\gamma,
\qquad
\tau_-=\frac1{1-\gamma}\left(\frac{C_0}{c_1}\right)^\gamma.
\]
The initial bounds imply $U_{\tau_-}(x,-T)\leq u_0(x)\leq U_{\tau_+}(x,-T)$. The exterior ordering holds because all three functions vanish there. Proposition~\ref{conditional} gives the upper comparison with $U_{\tau_+}$, and Remark~\ref{remcomparison} gives the lower comparison with $U_{\tau_-}$. Hence
\[
U_{\tau_-}\leq u\leq U_{\tau_+}\quad\text{on }\overline\Omega\times[-T,0].
\]
Using \eqref{bounary}, we may take
\[
C_1=c_0(1-\gamma)^{-1/\gamma}(T+\tau_-)^{-1/\gamma},
\qquad
C_2=C_0(1-\gamma)^{-1/\gamma}\tau_+^{-1/\gamma}.
\]
This proves the estimate with the stated time dependence.
\end{proof}

The preceding theorem identifies the degeneracy as $u^\gamma\asymp d^{\sigma\gamma}$. To study continuity of the quotient, we will subtract multiples of $\psi$ and rescale. The resulting errors may change sign and satisfy degenerate extremal inequalities. We therefore next establish half-space estimates for these inequalities, followed by boundary estimates in curved domains.

\section{Half-space estimates, a Hopf lemma, and a Liouville theorem}\label{sec:liouville}
In this section, we consider equations in the upper half-space and use the cylinders $Q_r^+$. The constants depend only on $n,\sigma,\gamma,\lambda,\Lambda$, unless additional dependence is specified. The boundary barriers and the oscillation argument are related to those in \cite{ros2016boundary,LeeYun2025}.

\subsection{Boundary growth estimates}
We first establish an upper bound for $|u|/x_n^\sigma$ in a smaller
cylinder. The estimate applies to functions of either sign satisfying
the degenerate extremal inequalities, and is controlled by the local
supremum, the weighted tail, and the size of the right-hand side.

\begin{Lemma}\label{nonhmlip}
Let $u:\R^n\times[-1,0]\to\R$ satisfy \eqref{eq:tail-class}, be continuous on $\overline{Q_1^+}$, and vanish on $\{x_n\leq0\}\times[-1,0]$. Suppose, for $F\geq0$, that
\[
\partial_tu-x_n^{\sigma\gamma}M^+u\leq F,\qquad
\partial_tu-x_n^{\sigma\gamma}M^-u\geq-F
\quad\text{in }Q_1^+.
\]
Set
\[
N=\|u\|_{L^\infty(B_1\times[-1,0])}
+\sup_{t\in[-1,0]}\int_{|y|>1}\frac{|u(y,t)|}{|y|^{n+2\sigma}}\,dy.
\]
Then
\[
|u(x,t)|\leq C(N+F)x_n^\sigma\quad\text{on }\overline{Q_{1/2}^+}.
\]
\end{Lemma}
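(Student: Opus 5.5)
The plan is to prove the one-sided bound $u\le C(N+F)x_n^\sigma$ on $\overline{Q_{1/2}^+}$. The same argument applied to $-u$ then gives the lower bound: $-u$ satisfies the same pair of inequalities, because $M^-(-f)=-M^+f$, and $N$ is unchanged. The bound will come from comparison with a supersolution built from powers of $x_n$, using Corollary~\ref{com for dis}, which is Proposition~\ref{com for weight} with weight $a=x_n^{\sigma\gamma}$ and $M=M^+$. The key one-dimensional computation is that, for every $L\in\mathcal L_*$ and $\beta\in(\sigma,2\sigma)$,
\[
L\bigl((x_n)_+^{\beta}\bigr)=c(\beta)\,x_n^{\beta-2\sigma}\quad\text{in }\{x_n>0\},\qquad c(\beta)>0.
\]
Only the directional operators enter here, and the sign of $c(\beta)$ is the same in every direction. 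The constant satisfies $c(\sigma)=0$, $c<0$ for $\beta<\sigma$, and $c>0$ for $\beta$ slightly above $\sigma$.

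First, localize. Replace $u$ outside $B_{3/4}\times[-1,0]$ by $\min\{u,N\}$ on $B_1$ and by $0$ outside $B_1$. By the tail hypothesis, this changes $M^\pm u$ in $Q_{5/8}^+$ by at most $CN$, so the truncated function satisfies $\partial_tu-x_n^{\sigma\gamma}M^+u\le F+CN$ there. Next, fix $0<\epsilon<\sigma(1-\gamma)$ and set
\[
\phi(x)=(x_n)_+^\sigma-(x_n)_+^{\sigma+\epsilon}+\eta(x'),
\]
where $\eta\ge0$ is smooth, vanishes on $|x'|\le1/2$, and is $\ge1$ on $|x'|\ge 5/8$. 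In the region $D=\{|x'|<5/8,\ 0<x_n<\delta\}$ this gives
\[
x_n^{\sigma\gamma}M^+\phi\le -c\,x_n^{\sigma\gamma+\epsilon-\sigma}+C\,x_n^{\sigma\gamma}.
\]
The choice $\epsilon<\sigma(1-\gamma)$ makes the exponent $\sigma\gamma+\epsilon-\sigma$ negative, so the right-hand side is $\le -1$ once $\delta$ is small. Thus the degenerate weight is beaten by the singular gain of $x_n^{\sigma+\epsilon}$. For $K$ large, $K(N+F)\phi$ is a stationary supersolution in $D$ with right-hand side $F+CN$. It dominates $u$ on $\{x_n=\delta\}$, where $\phi\gtrsim\delta^\sigma$, and on the lateral part, where $\eta\ge1$. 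It dominates trivially on $\{x_n\le0\}$.

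The main obstacle is the initial time $t=-1$. There $u(\cdot,-1)$ is only bounded by $N$ and vanishes on $\{x_n=0\}$ with no quantitative rate, so a stationary barrier cannot dominate it. The plan is to add a time-dependent boundary-layer term that starts at least $N$ and relaxes to $O(x_n^\sigma)$ on the intrinsic time scale. Pick $\beta\in(0,\sigma)$, so that $M^+\bigl((x_n)_+^\beta\bigr)\le -c\,x_n^{\beta-2\sigma}$. Then try
\[
W(x,t)=N\min\Bigl\{1,\ \Bigl(\tfrac{x_n}{\rho(t)}\Bigr)^{\beta}\Bigr\}+N\Bigl(\tfrac{x_n}{\rho(t)}\Bigr)^{\sigma}\quad(\text{or a smoothed version}),
\]
with $\rho(-1)=\delta$ and $\rho$ decreasing so that $\rho(t)^{\sigma(2-\gamma)}\approx t+1$ is compatible with the intrinsic scaling $r^{\sigma(2-\gamma)}$. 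In the layer $x_n\lesssim\rho$, the gain $x_n^{\sigma\gamma}\,x_n^{\beta-2\sigma}$ must dominate the loss from $\partial_t\rho<0$. This should reduce to the scaling relation $x_n^{\sigma(2-\gamma)}\lesssim\rho^{\sigma(2-\gamma)}\approx t+1$, and requires checking that the minimum is handled correctly in the viscosity sense. Once this holds, $W+K(N+F)\phi$ dominates $u$ on the parabolic boundary of $D\times(-1,0]$, and Corollary~\ref{com for dis} gives $u\le W+K(N+F)\phi$.

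At times $t\ge -1+2^{-\sigma(2-\gamma)}$, i.e.\ in $Q_{1/2}$, we have $\rho(t)\gtrsim1$, so $W\le CNx_n^\sigma$ there. Away from $D$, the bound $u\le N\le CN x_n^\sigma$ holds trivially since $x_n\ge\delta$. This gives the one-sided estimate, and the symmetric argument for $-u$ completes the proof.

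If the explicit layer barrier proves awkward, the fallback is an iteration in time on the cylinders $Q_{2^{-k}}^+$ combined with scaling. This would use only that $u=0$ on $\{x_n=0\}$ and the interior H\"older estimate of Theorem~\ref{thm:interior-estimates} applied to rescaled boxes.
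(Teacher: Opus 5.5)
\textbf{There is a genuine gap at the initial time.} Your architecture matches the paper's: a cutoff, a barrier of the form $x_n^\sigma$ minus a higher power whose image under $L$ is positive, Corollary~\ref{com for dis}, and then the same argument for $-u$. You also correctly locate the difficulty at $t=-1$. However, the layer barrier $W$ does not resolve it, under either reading of your definition.

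\emph{Reading 1, as written ($\rho(-1)=\delta$, $\rho$ decreasing).} Then $W(\cdot,-1)\le 2N(x_n/\delta)^\beta$ vanishes on $\{x_n=0\}$ at a fixed rate. So it cannot dominate $u(\cdot,-1)$, which is exactly the obstruction you named. Also $\rho\le\delta$ contradicts your later claim $\rho(t)\gtrsim1$ on $Q_{1/2}$.

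\emph{Reading 2 ($\rho(-1)=0$, $\rho$ increasing, consistent with $\rho^{\sigma(2-\gamma)}\approx t+1$).} Then $N(x_n/\rho)^\sigma$ is infinite at $t=-1$, so $W$ is not admissible in Proposition~\ref{com for weight}. Moreover, $L$ annihilates this term and its time derivative is negative, so it is a pure loss of size about $N(x_n/\rho)^\sigma/(t+1)$. Where $x_n\gg\rho$, the layer gain is only about $Nx_n^{-\sigma(2-\gamma)}$, and the gain from $K(N+F)\phi$ is bounded in time. Neither compensates this loss as $t\downarrow-1$. If you drop that term, the $\beta$-layer gives at best $u\lesssim Nx_n^\beta$ with $\beta<\sigma$ at later times, not $x_n^\sigma$.

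Your fallback does not help either: interior H\"older estimates carry no boundary rate.

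\textbf{The missing idea.} After normalizing so that $|w|\le1$ on $\R^n\times[-1,0]$ and both right-hand sides are at most $1$, the paper works with $\widetilde w=(t+1)w$. This function vanishes at $t=-1$. Testing with $\widetilde\varphi/(t+1)$ on the same patch cylinder and using positive homogeneity of $M^+$ gives $\partial_t\widetilde w-x_n^{\sigma\gamma}M^+\widetilde w\le(t+1)+w\le2$. The initial ordering against a \emph{stationary} barrier is then trivial. On $\overline{Q_{1/2}^+}$ you simply divide by $t+1\ge1-2^{-\sigma(2-\gamma)}$.

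\textbf{A second, fixable gap in your stationary barrier.} A nonlocal comparison needs the barrier to dominate on all of $D^c$, not only on $\{x_n=\delta\}$ and the lateral part. Where $\eta=0$, the profile $x_n^\sigma-x_n^{\sigma+\epsilon}$ tends to $0$ as $x_n\to1$ and is negative for $x_n>1$. So $K(N+F)\phi\ge u$ fails near $e_n$ inside $B_1$, and above it. The paper fixes this as follows:
\begin{itemize}
\item It uses the truncated profile $\Phi=2\min\{(x_n)_+,1\}^\sigma-\min\{(x_n)_+,1\}^{\sigma(2-\gamma)}\ge\min\{(x_n)_+,1\}^\sigma$.
\item It checks that the truncation changes $L$ by at most a constant for $x_n<1/4$, so that $-x_n^{\sigma\gamma}M^+\Phi\ge c_*/2$ for $x_n<\epsilon$.
\item It adds a smooth cutoff equal to $1$ outside $B_{1/8}(x')$.
\end{itemize}
With these choices, the barrier is at least $1\ge\widetilde w$ on $D^c\cap\{x_n>0\}$.
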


\begin{proof}
\textbf{Step 1. }
Take a smooth cutoff $\chi$ supported in $B_1$, with $0\leq\chi\leq1$ and $\chi=1$ on $B_{3/4}$, and set $w=\chi u$. For $x\in B_{5/8}$, $w(x,t)=u(x,t)$ and, uniformly in $L\in\mathcal L_*$,
\begin{align*}
|L(w-u)(x,t)|
&\leq C\int_{|y|>3/4}\frac{|u(y,t)|}{|x-y|^{n+2\sigma}}\,dy\leq CN.
\end{align*}
Hence, in the viscosity sense,
\[
\begin{cases}
\partial_t w-x_n^{\sigma\gamma}M^+w\leq C(F+N),\\
\partial_t w-x_n^{\sigma\gamma}M^-w\geq -C(F+N),
\end{cases}
\qquad\text{in }B_{5/8}^+\times(-1,0].
\]

If $N+F=0$, the result is immediate. Otherwise, divide $w$ by $C(N+F)$ and retain the notation $w$. Then $|w|\leq1$ globally, and the right-hand sides are bounded by $1$. Set $\widetilde w=(t+1)w$, with $\widetilde w(\cdot,-1)=0$. For an upper test function $\widetilde\varphi$, set $\varphi=\widetilde\varphi/(t+1)$ and use the same patch cylinder. Positive homogeneity gives
\[
\partial_t\widetilde\varphi-x_n^{\sigma\gamma}M^+\widetilde w_{\widetilde\varphi}
\leq(t+1)+w\leq2.
\]
All terms are evaluated at the contact point. 

\textbf{Step 2. }
Let
\[
\Phi_0(x):=2(x_n)_+^\sigma-(x_n)_+^{\sigma(2-\gamma)},\quad \Phi(x)=2\min\{(x_n)_+,1\}^\sigma-
\min\{(x_n)_+,1\}^{\sigma(2-\gamma)}.
\]
Then $\Phi$ satisfies
\[
\min\{(x_n)_+,1\}^\sigma\leq\Phi(x)\leq2(x_n)_+^\sigma.
\]
The half-space power formula, Lemmas~2.2--2.3 of \cite{ros2016boundary}, gives
\[
L((x_n)_+^\sigma)=0,\qquad
L((x_n)_+^{\sigma(2-\gamma)})=c_Lx_n^{-\sigma\gamma},
\qquad 0<c_*\leq c_L\leq C_*.
\]
For $0<x_n<1/4$,
\[
\delta(\Phi-\Phi_0)(x,z)=0\quad\text{if }|z|<3/4,
\qquad
|\delta(\Phi-\Phi_0)(x,z)|
\leq C\bigl(1+|z|^{\sigma(2-\gamma)}\bigr).
\]
Hence, uniformly in $L\in\mathcal L_*$,
\begin{align*}
|L(\Phi-\Phi_0)(x)|
&\leq C\int_{|z|\geq3/4}
\frac{1+|z|^{\sigma(2-\gamma)}}{|z|^{n+2\sigma}}\,dz\leq C\int_{3/4}^{\infty}
\bigl(r^{-1-2\sigma}+r^{-1-\sigma\gamma}\bigr)\,dr
\leq C.
\end{align*}
Therefore,
\[
M^+\Phi
\leq M^+\Phi_0+\sup_{L\in\mathcal L_*}|L(\Phi-\Phi_0)|
\leq-c_*x_n^{-\sigma\gamma}+C.
\]
Choose $\epsilon\in(0,1/32)$ such that
$C\epsilon^{\sigma\gamma}\leq c_*/2$. Then
\[
-x_n^{\sigma\gamma}M^+\Phi
\geq c_*-Cx_n^{\sigma\gamma}
\geq c_*/2
\qquad\text{for }0<x_n<\epsilon.
\]

\textbf{Step 3. }
Fix $x'\in\overline{B_{1/2}}\cap\{x_n=0\}$ and choose $\rho\in C^\infty(\R^n)$ such that $0\leq\rho\leq1$, $\rho=0$ in $B_{1/16}(x')$, $\rho=1$ in $\R^n\setminus B_{1/8}(x')$, $\sup_{L\in\mathcal L_*}\|L\rho\|_{L^\infty(\R^n)}\leq C$. Set
\[
D:=B_{1/8}(x')\cap\{0<x_n<\epsilon\}\subset B_{5/8}^+,
\]
and choose $K>0$ such that
\[
K\epsilon^\sigma\geq1,\qquad
\frac{Kc_*}{2}-C\epsilon^{\sigma\gamma}\geq2.
\]
In $D\times(-1,0]$,
\begin{align*}
\partial_t(K\Phi+\rho)-x_n^{\sigma\gamma}M^+(K\Phi+\rho)
&\geq-Kx_n^{\sigma\gamma}M^+\Phi-x_n^{\sigma\gamma}M^+\rho\geq\frac{Kc_*}{2}-C\epsilon^{\sigma\gamma}\geq2.
\end{align*}
Moreover,
\[
K\Phi(x)+\rho(x)\geq
\begin{cases}
1,&x\notin B_{1/8}(x'),\\
K\epsilon^\sigma\geq1,&x_n\geq\epsilon,\\
0,&x_n\leq0,
\end{cases}
\]
and hence
\[
\begin{cases}
\widetilde w\leq K\Phi+\rho
&\text{on }D^c\times[-1,0],\\
\widetilde w(\cdot,-1)=0\leq K\Phi+\rho
&\text{in }\R^n.
\end{cases}
\]
Proposition~\ref{com for weight} gives
\[
\widetilde w\leq K\Phi+\rho\quad\text{in }D\times(-1,0].
\]
For $x\in\overline{B_{1/2}}$ with $0<x_n<\epsilon$, set $x'=(x_1,\ldots,x_{n-1},0)$. Then $|x-x'|=x_n<\epsilon<1/16$, $x\in D$, $\rho(x)=0$, and
\[
(t+1)w(x,t)\leq K\Phi(x)\leq2Kx_n^\sigma
\quad\text{for }t\in[-1,0].
\]
Since $t+1\geq1-2^{-\sigma(2-\gamma)}>0$ on $\overline{Q_{1/2}^+}$, 
we obtain, on $\overline{Q_{1/2}^+}$,
\[
w(x,t)\leq
\begin{cases}
\displaystyle\frac{2K}{1-2^{-\sigma(2-\gamma)}}x_n^\sigma,
&0<x_n<\epsilon,\\[6pt]
1\leq\epsilon^{-\sigma}x_n^\sigma,&x_n\geq\epsilon,\\
0,&x_n=0.
\end{cases}
\]
Using $M^+(-w)=-M^-w$, the same argument for $-w$ yields
\[
|w(x,t)|\leq Cx_n^\sigma
\quad\text{on }\overline{Q_{1/2}^+}.
\]
Restoring the normalization and using $\chi=1$ on $B_{1/2}$ gives
\[
|u(x,t)|\leq C(N+F)x_n^\sigma
\quad\text{on }\overline{Q_{1/2}^+}.
\]
\end{proof}

\subsection{A Hopf lemma with tail control}
We next establish a quantitative Hopf lemma for functions that are
nonnegative in $Q_1^+$. It bounds $u/x_n^\sigma$ from below in terms
of an interior value at an earlier time, with error terms involving
$F$ and the negative tail outside $B_1$. This estimate will be used
in the oscillation argument in Proposition~\ref{flat}.

\begin{Lemma}[Hopf principle]\label{tailhpf}
Let $u:\R^n\times[-1,0]\to\R$ satisfy \eqref{eq:tail-class} and be continuous on $\overline{Q_1^+}$. Suppose, for some $F\geq0$, that
\[
\partial_tu-x_n^{\sigma\gamma}M^-u\geq-F,\qquad
\partial_tu-x_n^{\sigma\gamma}M^+u\leq F\quad\text{in }Q_1^+,
\]
$u\geq0$ on $\overline{Q_1^+}$, and $u=0$ on $\{x_n\leq0\}\times[-1,0]$. Then there are structural constants $c_h,c_H>0$ such that
\begin{equation}\label{hpfineq}
\begin{aligned}
u(x,t)\geq\Bigg[&c_hu\left(\frac{e_n}{2},-\left(\frac23\right)^{\sigma(2-\gamma)}\right)-c_H\left(F+\sup_{\tau\in[-1,0]}\int_{|y|>1}\frac{u^-(y,\tau)}{|y|^{n+2\sigma}}\,dy\right)\Bigg]x_n^\sigma
\end{aligned}
\end{equation}
on $\overline{Q_{1/2}^+}$. 
\end{Lemma}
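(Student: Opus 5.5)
Our approach has two stages. First we spread the single interior value to a lower bound on a whole interior region at later times. Then we push that lower bound down to the boundary with a subsolution barrier built from the profile $\Phi$ of Lemma~\ref{nonhmlip}, using the comparison principle Proposition~\ref{com for weight} with $a(x)=x_n^{\sigma\gamma}$ (Corollary~\ref{com for dis}). Throughout, write
\[
m_0=u\bigl(\tfrac{e_n}{2},-(\tfrac23)^{\sigma(2-\gamma)}\bigr),\qquad
N^-=\sup_{\tau\in[-1,0]}\int_{|y|>1}\frac{u^-(y,\tau)}{|y|^{n+2\sigma}}\,dy .
\]

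\textbf{Step 1 (interior Harnack).} In a region $\{x_n\ge 1/8\}\cap B_{7/8}$ the weight $x_n^{\sigma\gamma}$ lies between two positive constants. Divide the equation by it and rescale a small cylinder around $(e_n/2,\cdot)$ to unit size. The rescaled function satisfies the uniformly parabolic extremal inequalities with right-hand side $CF$, so Theorem~\ref{thm:interior-estimates}, estimate \eqref{eq:interior-harnack}, applies. Its negative tail is controlled by $N^-$, because $u\ge0$ on $\overline{Q_1^+}$ and $u=0$ for $x_n\le0$, so $u^-$ lives only outside $B_1$. Chaining finitely many Harnack cylinders forward in time and sideways gives
\[
u\ge c\,m_0-C(F+N^-)\quad\text{on } K\times[-(1/2)^{\sigma(2-\gamma)},0],
\]
where $K=\overline{B_{3/4}}\cap\{x_n\ge\epsilon\}$ and $\epsilon$ is small. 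This uses that the time $-(2/3)^{\sigma(2-\gamma)}$ lies strictly before the times in $Q_{1/2}$. The Harnack chain runs with time gaps of the intrinsic order, since the coefficient is comparable to $1$ there.

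\textbf{Step 2 (barrier near the boundary).} Let $\delta=c\,m_0-C(F+N^-)$; if $\delta\le0$ the claim is trivial because $u\ge0$. For $x'\in\overline{B_{1/2}}\cap\{x_n=0\}$, work in $D=B_{1/8}(x')\cap\{0<x_n<\epsilon\}$ on a time interval $(t_1,0]$, with $t_1$ between $-(2/3)^{\sigma(2-\gamma)}$ and $-(1/2)^{\sigma(2-\gamma)}$ so that Step 1 covers it. Build the barrier
\[
b(x,t)=\eta(t)\bigl(\delta\,\Phi(x)\chi_1(x)-A\,\delta\,\rho(x)\bigr)-C'(F+N^-)\Phi(x).
\]
Here $\chi_1$ is a cutoff localizing to $B_{3/4}$, and $\rho$ is a bump vanishing near $x'$ that makes $b\le0$ on $\partial B_{1/8}(x')$. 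The factor $\eta$ is a time cutoff vanishing at $t_1$ that removes the initial data. The relevant facts from Lemma~\ref{nonhmlip} are $M^-$ versus $M^+$ applied to $\Phi$: we need $-x_n^{\sigma\gamma}M^-\Phi\le -c$ near the boundary, i.e.\ $M^-(-\Phi)\ge c\,x_n^{-\sigma\gamma}-C$. So $-\Phi$, multiplied by the constant $C'(F+N^-)$, absorbs the forcing $F$.

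The exterior contribution enters as a right-hand side $N^-$, since $u\ge -u^-$ outside $B_1$. The simplest way to handle it is to replace $u$ by $u\,\mathbf 1_{B_1}$, which adds $CN^-$ to the right-hand side as in Step 1 of Lemma~\ref{nonhmlip}. To check $\partial_t b-x_n^{\sigma\gamma}M^-b\le -F$ in $D$, use that $\Phi$ has bounded $M^\pm$ away from $x_n=0$ after localization. Terms of size $x_n^{\sigma\gamma}$ are small for $x_n<\epsilon$, and $\eta'$ times $\Phi\le 2x_n^\sigma$ is likewise dominated by the good term $c\,x_n^{-\sigma\gamma}\cdot x_n^{\sigma\gamma}=c$ after choosing $\epsilon$ small.

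\textbf{Step 3 (comparison and conclusion).} On $D^c$ we have $u\ge b$: on $\{x_n\ge\epsilon\}\cap B_{3/4}$ by Step 1, on $\{x_n\le0\}$ since both sides are at most $0$, and elsewhere in $B_1$ since $u\ge0\ge b$. Proposition~\ref{com for weight}, applied with $M=M^-$, $a=x_n^{\sigma\gamma}$ and the truncated $u$, then gives $u\ge b$ in $D$. At points directly above $x'$ we have $\rho=0$, which yields \eqref{hpfineq} for $0<x_n<\epsilon$. For $x_n\ge\epsilon$ the bound follows directly from Step 1, because there $x_n^\sigma$ is comparable to $1$.

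The main obstacle is Step 2. The barrier must simultaneously be a strict subsolution with a time factor, despite the degenerate coefficient, while handling the sign-changing exterior values. Subtracting $C'(F+N^-)\Phi$ handles the error terms cleanly only because $M^-(-\Phi)\gtrsim x_n^{-\sigma\gamma}$ compensates exactly the degeneracy $x_n^{\sigma\gamma}$. One must verify that the time derivative $\eta'\delta\Phi\le C\delta x_n^\sigma$ is absorbed by $\eta\delta$ times a positive multiple of $x_n^{\sigma\gamma}\cdot x_n^{-\sigma\gamma}$. If this fails, I would instead use $\Phi_0=(x_n)_+^\sigma-(x_n)_+^{\sigma(2-\gamma)}$-type profiles with time-growing amplitude, $\eta(t)=1-e^{-K(t-t_1)}$.
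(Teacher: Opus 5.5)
\textbf{Verdict: genuine gap in Step~2.} Your Step~1 is sound: it runs a Harnack chain for $u^+$, whose negative part lives only outside $B_1$. Absorbing $F$ and the tail error with a multiple of $-\Phi$ is also sound; it parallels the paper's Step~5, which adds $\frac{F}{c_*}P$ with $P=(x_n)_+^\sigma-(x_n)_+^{\sigma(2-\gamma)}$. The problem is Step~2: your $b$ is not a subsolution, for two reasons.

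\emph{A sign error.} The profile $\Phi$ of Lemma~\ref{nonhmlip} is a supersolution profile: $M^-\Phi\le M^+\Phi\le -c_*x_n^{-\sigma\gamma}+C$. Your ``i.e.'' conflates $M^-\Phi$ with $M^-(-\Phi)=-M^+\Phi$. Only the latter is large and positive, and it helps only the term $-C'(F+N^-)\Phi$. With $F=N^-=0$, at points of $D$ near $x'$ (where $\rho=0$) you get $\partial_tb-x_n^{\sigma\gamma}M^-b\ge\eta\delta\,(c_*-Cx_n^{\sigma\gamma})>0$, the wrong sign.

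\emph{The multiplicative cutoff cannot work.} Suppose you replace $\Phi$ by a genuine subsolution profile such as $(x_n)_+^\sigma+(x_n)_+^{\sigma(2-\gamma)}$. Then $x_n^{\sigma\gamma}M^-b$ is still bounded above by $C\eta\delta$ near $x'$. Absorbing $\eta'\delta x_n^\sigma$ for $x_n$ up to $\epsilon$ (again with $F=N^-=0$) then requires $\eta'\le C_\epsilon\eta$ on $(t_1,0]$. Together with $\eta(t_1)=0$, Gronwall forces $\eta\equiv0$. Your fallback $\eta=1-e^{-K(t-t_1)}$ has $\eta'/\eta\to\infty$ as $t\downarrow t_1$ and fails the same way. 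The real obstruction is that near $t_1$ nothing in your barrier contributes a positive, $\eta$-independent amount to $M^-b$ on $D$.

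\textbf{How the paper supplies this.} It uses nonlocality: the interior values of $u$ go into the barrier,
\[
v=\xi u+\epsilon\eta\varphi_2-\chi_{\R^n\setminus B_1}u^-,
\]
where $\xi$ is a bump supported in $B_{1/16}(e_n/2)$. Comparison is done on an annulus of the tangent ball $B_{1/8}(x'+e_n/8)$, which is disjoint from the bump. There $\varphi_2$ is the Ros-Oton--Serra barrier with $M^-\varphi_2\ge c_0$. Harnack gives $u\ge c\,m_0-CN^-$ on the bump for all $t\in[\tau_0,0]$, so at every contact
\[
M^-v_\psi\ge\lambda\int\frac{\xi(y)u(y,t_0)}{|y-x_0|^{n+2\sigma}}\,dy-CN^-\ge c\,m_0-CN^-,
\]
uniformly in time, including where $\eta=0$. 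With $\epsilon=c'(m_0-KN^-)$, the time term satisfies $x_{0,n}^{-\sigma\gamma}\epsilon\eta'\varphi_2\le C\epsilon\, x_{0,n}^{\sigma(1-\gamma)}\le C\epsilon$, so it is absorbed.

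\textbf{A local alternative.} You could keep your local design but replace the multiplicative cutoff with an additive time term. For each target time $t_0\in[-2^{-\sigma(2-\gamma)},0]$, compare on $D\times(t_0-\tau,t_0]$ with
\[
\frac{\delta}{2}\bigl(\phi(x)+\kappa(t-t_0)\bigr)-C'(F+N^-)\Phi .
\]
Here $\phi$ is the localized subsolution profile minus $A\rho$, $\kappa$ is a small structural constant, and $\epsilon$ is small enough that $\phi\le\kappa\tau$ in $D$. Then read off the bound at $t=t_0$. As written, however, Step~2 does not go through.
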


\begin{proof}
We first assume $F=0$. Following the barrier construction in \cite{WangChen2022Hopf}, set
\begin{align*}
A:=u\left(\frac{e_n}{2},-\left(\frac23\right)^{\sigma(2-\gamma)}\right)\geq0,\qquad
N:=\sup_{t\in[-1,0]}\int_{|y|>1}
\frac{u^-(y,t)}{|y|^{n+2\sigma}}\,dy<\infty.
\end{align*}
If $A=0$, then $u\geq0\geq-c_HNx_n^\sigma$ on $\overline{Q_{1/2}^+}$. We may therefore assume $A>0$.
\medskip

\textbf{Step 1. }
Fix $r=\frac18$,  $D=B_{1/16}(e_n/2)$, $\xi_D\in C_c^\infty(D)$, $0\leq\xi_D\leq1$, and $\xi_D\not\equiv 0$. For $x'\in\overline{B_{1/2}}\cap\{x_n=0\}$, $B_r(x'+re_n)\subset B_{3/4}^+$, $B_r(x'+re_n)\cap D=\emptyset$ and $\overline{B_{r/2}(x'+re_n)}\subset\{x_n\geq r/2\}$. Set
\begin{gather*}
\tau_0:=-\frac12\left[\left(\frac23\right)^{\sigma(2-\gamma)}
+2^{-\sigma(2-\gamma)}\right],\quad
-\left(\frac23\right)^{\sigma(2-\gamma)}
<\tau_0<-2^{-\sigma(2-\gamma)}.
\end{gather*}
For $x\in B_{3/4}$ and $L\in\mathcal L_*$,
\begin{align*}
0\leq L(u^+-u)(x,t)
&\leq\Lambda\int_{|y|>1}
\frac{u^-(y,t)}{|y-x|^{n+2\sigma}}\,dy
\leq CN.
\end{align*}
Thus
\[
\begin{cases}
\partial_tu^+-x_n^{\sigma\gamma}M^+u^+\leq0,\\
\partial_tu^+-x_n^{\sigma\gamma}M^-u^+\geq-CN,
\end{cases}
\qquad\text{in }B_{3/4}^+\times(-1,0],
\]
and $(r/4)^{\sigma\gamma}\leq x_n^{\sigma\gamma}\leq1$ in $B_{3/4}\cap\{x_n>r/4\}$. By these bounds, the coefficient $x_n^{\sigma\gamma}$ can be absorbed into the ellipticity constants. The Harnack estimate \eqref{eq:interior-harnack} in Theorem~\ref{thm:interior-estimates}, applied to $u^+$ along a fixed finite chain of cylinders in this set, gives, uniformly in $x'$,
\begin{equation}\label{eq:hopf-interior}
\begin{aligned}
\inf_{D\times[\tau_0,0]}u&\geq cA-CN,\\
\inf_{\overline{B_{r/2}(x'+re_n)}\times[\tau_0,0]}u
&\geq cA-CN,\\
\inf_{(\overline{B_{1/2}^+}\cap\{x_n\geq r/2\})
\times[-2^{-\sigma(2-\gamma)},0]}u
&\geq cA-CN.
\end{aligned}
\end{equation}

\medskip

\textbf{Step 2. }
Fix $x'$ as above and set
\[
A_r:=B_r(x'+re_n)\setminus\overline{B_{r/2}(x'+re_n)}.
\]
Translating and scaling the barrier constructed in the proof of Lemma~3.4 in \cite{ros2016boundary}, we obtain a function $\varphi_2\in C(\R^n)$ satisfying
\begin{align*}
M^-\varphi_2&\geq c_0>0\quad\text{in }A_r,\\
\varphi_2&=0\quad\text{in }B_r(x'+re_n)^c,\\
\varphi_2(x)&\leq C(x_n)_+^\sigma\quad\mbox{for }x\in\R^n,\\
\varphi_2(x'+he_n)&\geq ch^\sigma\quad\mbox{for }0<h<r/2.
\end{align*}
Choose $\eta\in C^\infty(\R)$ such that $0\leq\eta\leq1$, $0\leq\eta'\leq C$, $\eta(t)=0$ when $t\leq\tau_0$ and $\eta(t)=1$ when $t\geq-2^{-\sigma(2-\gamma)}$. Define
\[
v(x,t)=\xi_D(x)u(x,t)+\epsilon\eta(t)\varphi_2(x)
-\chi_{\R^n\setminus B_1}(x)u^-(x,t),
\qquad t\in[\tau_0,0].
\]
By \eqref{eq:tail-class},
\begin{align*}
\int_{\R^n}\frac{|v(y,t)-v(y,s)|}{1+|y|^{n+2\sigma}}\,dy
&\leq2\int_{\R^n}
\frac{|u(y,t)-u(y,s)|}{1+|y|^{n+2\sigma}}\,dy+C\epsilon|\eta(t)-\eta(s)|\\
&\rightarrow0\qquad\mbox{as }s\to t.
\end{align*}

\medskip

\textbf{Step 3. }
Let $\psi$ be an upper test function at $(x_0,t_0)\in A_r\times(\tau_0,0]$, with $\overline{B_\ell(x_0)}\subset A_r$,  $\tau_0<t_0-\ell$, $\psi(x_0,t_0)=v(x_0,t_0)$, 
$\psi\geq v$  in $B_\ell(x_0)\times(t_0-\ell,t_0]$.
Let $v_\psi$ be the patch from Definition~\ref{defn:vis}, with spatial radius $\ell$. Since
\[
v=\epsilon\eta\varphi_2\quad\text{in }A_r\times[\tau_0,0],
\]
if $\eta(t_0)>0$, the spatial test
\[
\varphi(y):=\frac{\psi(y,t_0)}{\epsilon\eta(t_0)}
\]
satisfies
\[
\varphi(x_0)=\varphi_2(x_0),\qquad
\varphi\geq\varphi_2\quad\text{in }B_\ell(x_0),\qquad
M^-(\varphi_2)_\varphi(x_0)\geq c_0,
\]
where $(\varphi_2)_\varphi$ is patched in $B_\ell(x_0)$. If $\eta(t_0)=0$, then
\[
\eta'(t_0)=0,\qquad \psi(x_0,t_0)=0,\qquad
\delta\psi(x_0,z,t_0)\geq0\quad\mbox{for }|z|<\ell.
\]
Moreover,
\[
|y-x_0|\geq\frac14|y|\quad\mbox{for }|y|>1\quad\mbox{and}\quad
c\leq\int_D\frac{\xi_D(y)}{|y-x_0|^{n+2\sigma}}\,dy\leq C.
\]
In either case, \eqref{eq:hopf-interior} gives
\begin{align*}
M^-v_\psi(x_0,t_0)
&\geq\epsilon\eta(t_0)c_0
+\lambda\int_D\frac{\xi_D(y)u(y,t_0)}{|y-x_0|^{n+2\sigma}}\,dy-\Lambda\int_{|y|>1}
\frac{u^-(y,t_0)}{|y-x_0|^{n+2\sigma}}\,dy\geq cA-CN.
\end{align*}
The backward contact yields
\begin{align*}
\partial_t\psi(x_0,t_0)
&\leq\epsilon\eta'(t_0)\varphi_2(x_0),\\
x_{0,n}^{-\sigma\gamma}\partial_t\psi(x_0,t_0)
&\leq C\epsilon x_{0,n}^{\sigma(1-\gamma)}
\leq C\epsilon.
\end{align*}
Using common constants $c,C$ in these bounds and \eqref{eq:hopf-interior}, choose
\[
K\geq C/c,\qquad
cA-CN\geq c(A-KN).
\]
If $A\leq KN$, then
\[
u\geq0\geq c_h(A-KN)x_n^\sigma
\quad\text{on }\overline{Q_{1/2}^+}.
\]
For $A>KN$, set
\[
\epsilon=c'(A-KN),\qquad 0<c'\leq\frac{c}{2C}.
\]
Then
\begin{align*}
\partial_t\psi(x_0,t_0)
-x_{0,n}^{\sigma\gamma}M^-v_\psi(x_0,t_0)
&\leq x_{0,n}^{\sigma\gamma}
\bigl[C\epsilon-c(A-KN)\bigr]\\
&\leq-\frac c2x_{0,n}^{\sigma\gamma}(A-KN)\\
&\leq0.
\end{align*}
Hence
\[
\partial_tv-x_n^{\sigma\gamma}M^-v\leq0
\quad\text{in }A_r\times(\tau_0,0].
\]

\medskip

\textbf{Step 4. }
At $t=\tau_0$,
\[
v(x,\tau_0)=\xi_D(x)u(x,\tau_0)
-\chi_{\R^n\setminus B_1}(x)u^-(x,\tau_0)
\leq u(x,\tau_0)\quad(x\in\R^n).
\]
For $t\in[\tau_0,0]$,
\[
v(x,t)=
\begin{cases}
\xi_D(x)u(x,t)\leq u(x,t),
&x\in B_1\setminus B_r(x'+re_n),\\
-u^-(x,t)\leq u(x,t),
&x\in\R^n\setminus B_1.
\end{cases}
\]
On $\overline{B_{r/2}(x'+re_n)}\times[\tau_0,0]$,
\[
v=\epsilon\eta\varphi_2
\leq Cc'(A-KN)\leq c(A-KN)\leq u,
\]
after decreasing $c'$ if necessary. Proposition~\ref{com for weight} therefore gives
\[
u\geq v\quad\text{on }\overline{A_r}\times[\tau_0,0].
\]
For $0<h<r/2$,
\[
|x'+he_n-(x'+re_n)|=r-h\in(r/2,r),
\]
and hence
\begin{gather*}
u(x'+he_n,t)\geq\epsilon\varphi_2(x'+he_n)
\geq cc'(A-KN)h^\sigma \quad\mbox{for }-2^{-\sigma(2-\gamma)}\leq t\leq0.
\end{gather*}
For $x\in\overline{B_{1/2}^+}$ with $x_n\geq r/2$, \eqref{eq:hopf-interior} gives
\[
u(x,t)\geq c(A-KN)\geq c(A-KN)x_n^\sigma
\quad\bigl(-2^{-\sigma(2-\gamma)}\leq t\leq0\bigr).
\]
Taking $x'=(x_1,\ldots,x_{n-1},0)$ when $0<x_n<r/2$, and using continuity, we obtain
\[
u(x,t)\geq(c_hA-c_HN)x_n^\sigma
\quad\text{on }\overline{Q_{1/2}^+},
\qquad c_H=Kc_h,
\]
for a sufficiently small structural constant $c_h>0$.

\medskip

\textbf{Step 5. } Now let us consider the inhomogeneous case. Let $F>0$, and define $A,N$ as above. Set
\[
P(x):=(x_n)_+^\sigma-(x_n)_+^{\sigma(2-\gamma)}.
\]
The power identities in the proof of Lemma~\ref{nonhmlip} give, for $x_n>0$,
\[
LP(x)=-c_Lx_n^{-\sigma\gamma},\qquad
0<c_*\leq c_L\leq C_*\quad(L\in\mathcal L_*),
\]
and hence
\[
x_n^{\sigma\gamma}M^+P\leq-c_*,\qquad
x_n^{\sigma\gamma}M^-P\geq-C_*.
\]
Moreover, $\sigma<\sigma(2-\gamma)<2\sigma$, $0\leq P(x)\leq x_n^\sigma$ for $x\in B_1^+$, $P=0$ on $\{x_n\leq0\}$, and
\[
\int_{\R^n}\frac{|P(y)|}{1+|y|^{n+2\sigma}}\,dy\leq C.
\]
Define
\[
\widetilde u(x,t):=u(x,t)+\frac{F}{c_*}P(x).
\]
Since $P$ is independent of $t$, $\widetilde u$ satisfies \eqref{eq:tail-class}. Also, $\widetilde u\geq0$ on $\overline{Q_1^+}$, $\widetilde u=0$ on $\{x_n\leq0\}\times[-1,0]$, 
\[
\widetilde u\left(\frac{e_n}{2},-\left(\frac23\right)^{\sigma(2-\gamma)}\right)\geq A
\quad\mbox{and}\quad
\sup_{t\in[-1,0]}\int_{|y|>1}
\frac{\widetilde u^-(y,t)}{|y|^{n+2\sigma}}\,dy\leq N+CF.
\]
In the viscosity sense in $Q_1^+$,
\begin{align*}
\partial_t\widetilde u-x_n^{\sigma\gamma}M^-\widetilde u
&\geq-F-\frac{F}{c_*}x_n^{\sigma\gamma}M^+P\geq0,\\
\partial_t\widetilde u-x_n^{\sigma\gamma}M^+\widetilde u
&\leq F-\frac{F}{c_*}x_n^{\sigma\gamma}M^-P
\leq\left(1+\frac{C_*}{c_*}\right)F.
\end{align*}
As in Step~1,
\[
\begin{cases}
\partial_t\widetilde u^+-x_n^{\sigma\gamma}M^+\widetilde u^+\leq CF,\\
\partial_t\widetilde u^+-x_n^{\sigma\gamma}M^-\widetilde u^+\geq-C(N+F),
\end{cases}
\qquad\text{in }B_{3/4}^+\times(-1,0].
\]
Applying \eqref{eq:interior-harnack} in Theorem~\ref{thm:interior-estimates} to $\widetilde u^+$ along the same chain of cylinders yields
\[
\begin{aligned}
\inf_{D\times[\tau_0,0]}\widetilde u&\geq cA-C(N+F),\\
\inf_{\overline{B_{r/2}(x'+re_n)}\times[\tau_0,0]}\widetilde u
&\geq cA-C(N+F),\\
\inf_{(\overline{B_{1/2}^+}\cap\{x_n\geq r/2\})
\times[-2^{-\sigma(2-\gamma)},0]}\widetilde u
&\geq cA-C(N+F).
\end{aligned}
\]
Choose $K\geq C/c$. If $A\leq K(N+F)$, then \eqref{hpfineq} follows from $u\geq0$, provided $c_H\geq Kc_h$. Otherwise, use the same $\varphi_2,\eta$ as in Step~2 and set
\[
\epsilon=c'\bigl[A-K(N+F)\bigr],\qquad
v=\xi_D\widetilde u+\epsilon\eta\varphi_2
-\chi_{\R^n\setminus B_1}\widetilde u^-.
\]
For an upper test $\psi$ and its patch $v_\psi$ as in Step~3,
\[
M^-v_\psi(x_0,t_0)\geq cA-C(N+F)
\geq c\bigl[A-K(N+F)\bigr],\qquad
x_{0,n}^{-\sigma\gamma}\partial_t\psi(x_0,t_0)\leq C\epsilon.
\]
Using common constants $c,C$ above and choosing $c'>0$ sufficiently small gives
\[
\partial_tv-x_n^{\sigma\gamma}M^-v
\leq-\frac c2x_n^{\sigma\gamma}\bigl[A-K(N+F)\bigr]\leq0
\quad\text{in }A_r\times(\tau_0,0],
\]
\[
v\leq\widetilde u
\quad\text{on }\bigl(\R^n\times\{\tau_0\}\bigr)
\cup\bigl(A_r^c\times[\tau_0,0]\bigr).
\]
Proposition~\ref{com for weight} and the coverage argument in Step~4 yield
\[
\widetilde u(x,t)\geq c_h\bigl[A-K(N+F)\bigr]x_n^\sigma
\quad\text{on }\overline{Q_{1/2}^+}.
\]
Consequently, on $\overline{Q_{1/2}^+}$,
\begin{align*}
u(x,t)
&=\widetilde u(x,t)-\frac{F}{c_*}P(x)\\
&\geq\left[c_hA-Kc_h(N+F)-\frac{F}{c_*}\right]x_n^\sigma\\
&\geq\bigl[c_hA-c_H(N+F)\bigr]x_n^\sigma,
\end{align*}
after decreasing $c_h$ if necessary and choosing $c_H\geq Kc_h+c_*^{-1}$. This proves \eqref{hpfineq}.
\end{proof}

\subsection{Oscillation decay and boundary expansions}
We now combine Lemmas~\ref{nonhmlip} and~\ref{tailhpf} to show
that the oscillation of $u/x_n^\sigma$ decays at a geometric rate
in shrinking cylinders. This gives an expansion of $u$ near
$(0,0)$ as a constant multiple of $x_n^\sigma$, with a remainder
of higher order. The resulting estimate will be used to prove
the Liouville theorem below.

\begin{Proposition}[Boundary expansion with a finite tail]\label{flat}
Let $u$ be continuous on $\overline{Q_1^+}$ and satisfy \eqref{eq:tail-class}. Suppose, for some $F\geq0$, that
\[
\partial_tu-x_n^{\sigma\gamma}M^-u\geq-F,\qquad
\partial_tu-x_n^{\sigma\gamma}M^+u\leq F
\quad\text{in }Q_1^+,
\]
and $u=0$ on $\{x_n\leq0\}\times[-1,0]$. Assume
\[
N:=\|u\|_{L^\infty(B_2\times[-1,0])}
+\sup_{t\in[-1,0]}\int_{|y|>2}
\frac{|u(y,t)|}{|y|^{n+2\sigma}}\,dy<\infty.
\]
There are structural constants $\widetilde\alpha\in(0,\sigma(1-\gamma))$ and $\widetilde C>0$, and a number $\widetilde a\in\R$, such that
\[
|u(x,t)-\widetilde a x_n^\sigma|
\leq\widetilde C(N+F)
\left(|x|+|t|^{1/[\sigma(2-\gamma)]}\right)^{\widetilde\alpha}x_n^\sigma
\quad\text{on }\overline{Q_{1/2}^+},
\]
and $|\widetilde a|\leq\widetilde C(N+F)$. 
\end{Proposition}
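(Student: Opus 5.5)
The plan is the standard Krylov–Safonov/Ros-Oton–Serra oscillation iteration for the quotient $u/x_n^\sigma$, driven by Lemma~\ref{nonhmlip} and the Hopf Lemma~\ref{tailhpf}. We also need the intrinsic rescaling
\[
u_r(x,t)=r^{-\sigma}u(rx,r^{\sigma(2-\gamma)}t).
\]
This rescaling preserves the inequalities, since $x_n^{\sigma\gamma}M^\pm$ scales like $r^{\sigma\gamma-2\sigma}=r^{-\sigma(2-\gamma)}$. It also multiplies the right-hand side by $r^{\sigma(1-\gamma)}$. This factor is why $\widetilde\alpha<\sigma(1-\gamma)$ is required.

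\textbf{Normalization.} After dividing by $N+F$ (times a large constant), Lemma~\ref{nonhmlip} applied in $Q_1^+$ gives $|u|\le Cx_n^\sigma$ on $Q_{1/2}^+$. This identifies the starting sequences
\[
m_0=-C,\qquad M_0=C,
\]
with $m_0x_n^\sigma\le u\le M_0x_n^\sigma$ in $Q_{r_0}^+$, and the corresponding cylinder at scale $r_0=1/2$. The right-hand side is now $\le\delta$ for a small structural $\delta$, obtained by dividing by a large constant times $(N+F)$.

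\textbf{Inductive claim.} We construct $m_k\le M_k$ with
\[
m_kx_n^\sigma\le u\le M_kx_n^\sigma\ \text{in } Q^+_{\rho^k/2},\qquad M_k-m_k=2C\rho^{k\widetilde\alpha},
\]
with $m_k$ nondecreasing and $M_k$ nonincreasing, for a small structural $\rho$.

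\textbf{Inductive step.} Rescale to unit size and consider
\[
u-m_kx_n^\sigma\ge0 \quad\text{and}\quad M_kx_n^\sigma-u\ge0.
\]
By \eqref{eq:subtract-power}, both satisfy the same extremal inequalities, with rescaled forcing $\le\delta\rho^{k\sigma(1-\gamma)}$. At the interior point $(e_n/2,-(2/3)^{\sigma(2-\gamma)})$ of the rescaled cylinder, the two values sum to $(M_k-m_k)(\cdot)^\sigma$. Hence one of them is at least $\frac12(M_k-m_k)c$. Apply Lemma~\ref{tailhpf} to that one; say it is $u-m_kx_n^\sigma$. This raises the lower bound:
\[
m_{k+1}=m_k+c_h'(M_k-m_k)-\text{errors},
\]
and the oscillation shrinks by a factor $(1-c)$. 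The errors are the forcing term and the negative tail
\[
\int_{|y|>1}\frac{(u_k-m_kx_n^\sigma)^-}{|y|^{n+2\sigma}}\,dy.
\]
On the dyadic shells $\rho^{-j}\le|y|\le\rho^{-j-1}$ of the rescaled picture, the inductive hypothesis at step $k-j$ gives
\[
(u_k-m_kx_n^\sigma)^-\le(m_k-m_{k-j})|y|^\sigma\le C\rho^{(k-j)\widetilde\alpha-k\widetilde\alpha}|y|^{\sigma}\cdots.
\]
More precisely, after rescaling it is bounded by $C(|y|^{\sigma+\widetilde\alpha}-|y|^\sigma)$ times $\rho^{k\widetilde\alpha}$. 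Beyond the original unit scale, the bound $N$ together with the growth of $x_n^\sigma$ controls it. Integrating against $|y|^{-n-2\sigma}$, this gives $\rho^{k\widetilde\alpha}\,\eta(\widetilde\alpha)$ with $\eta\to0$ as $\widetilde\alpha\to0$, since $\sigma+\widetilde\alpha<2\sigma$. Choosing $\widetilde\alpha$ and $\delta$ small, the errors are absorbed, and $(1-c/2)\le\rho^{\widetilde\alpha}$ closes the induction. The choice of $\rho$ is tied to rescaling $Q_{1/2}\to Q_{\rho/2}$; we take $\rho$ fixed, which is allowed by the Hopf lemma giving a bound on all of $Q_{1/2}^+$, hence on the smaller cylinder.

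\textbf{Conclusion.} Set $\widetilde a=\lim m_k=\lim M_k$. Then $|\widetilde a|\le C$, and for $(x,t)\in Q^+_{\rho^k/2}\setminus Q^+_{\rho^{k+1}/2}$,
\[
|u-\widetilde ax_n^\sigma|\le C\rho^{k\widetilde\alpha}x_n^\sigma\le C(|x|+|t|^{1/[\sigma(2-\gamma)]})^{\widetilde\alpha}x_n^\sigma.
\]
Undoing the normalization gives the statement. The main obstacle is the tail bookkeeping in the inductive step. The negative part of $u-m_kx_n^\sigma$ outside the unit ball of the rescaled picture must be controlled by summing the previous oscillation bounds over dyadic annuli and the global bound $N$ on the far region. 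This summation must still converge with a small constant, which is what restricts $\widetilde\alpha$ to be small.
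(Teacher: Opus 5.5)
Your proposal is correct and takes essentially the same route as the paper. It uses Lemma~\ref{nonhmlip} for the initial bracket, the dichotomy at the point $(e_n/2,-(2/3)^{\sigma(2-\gamma)})$, and Lemma~\ref{tailhpf} applied to $u-m_kx_n^\sigma$ or $M_kx_n^\sigma-u$. The negative tail is controlled by the earlier brackets on dyadic shells together with $N$ in the far region, and $\widetilde\alpha<\sigma(1-\gamma)$ handles the forcing. The differences are cosmetic: the paper takes $\rho=1/2$ and makes the declared oscillation $\mu r_k^{\widetilde\alpha}$ large compared with $\overline C\ge|a_k|$, so that the far-tail term $Cr_k^{\sigma-\widetilde\alpha}(1+\overline C)$ is absorbed, which is what your normalization by a large multiple of $N+F$ achieves.
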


\begin{proof}
If $N+F=0$, take $\widetilde a=0$. Otherwise, divide $u$ and $F$ by $N+F$ and retain the same notation, so that $N+F=1$. Set $r_k=2^{-k}$, $k\geq1$. We construct
\[
a_{k+1}\geq a_k,\qquad b_{k+1}\leq b_k,
\]
such that
\begin{equation}\label{eq:brackets}
\begin{split}
a_kx_n^\sigma&\leq u(x,t)\leq b_kx_n^\sigma
\quad\text{on }\overline{Q_{r_k}^+},\\
0&\leq b_k-a_k\leq\mu r_k^{\widetilde\alpha}.
\end{split}
\end{equation}
Here $0<\widetilde\alpha<\min\{\sigma/2,\sigma(1-\gamma)\}$ and $\mu>0$ will be fixed in Step~4.

\medskip

\textbf{Step 1.}
Since $N+F=1$ and $r_1=1/2$,
\begin{align*}
\|u\|_{L^\infty(B_1\times[-1,0])}
&\leq\|u\|_{L^\infty(B_2\times[-1,0])}\leq1,\\
\sup_{\tau\in[-1,0]}\int_{|z|\geq r_1}
\frac{|u(z,\tau)|}{|z|^{n+2\sigma}}\,dz
&\leq C\|u\|_{L^\infty(B_2\times[-1,0])}+\sup_{\tau\in[-1,0]}\int_{|z|>2}
\frac{|u(z,\tau)|}{|z|^{n+2\sigma}}\,dz\leq CN\leq C.
\end{align*}
Lemma~\ref{nonhmlip} gives
\[
|u(x,t)|\leq\overline Cx_n^\sigma
\quad\text{on }\overline{Q_{1/2}^+}.
\]
Choose
\[
a_1=-\overline C,\qquad b_1=\overline C,\qquad
\mu\geq2^{1+\widetilde\alpha}\overline C.
\]
Then
\[
b_1-a_1=2\overline C\leq\mu r_1^{\widetilde\alpha}.
\]

\textbf{Step 2. }
Assume \eqref{eq:brackets} for $1\leq j\leq k$, with
\[
-\overline C=a_1\leq a_2\leq\cdots\leq a_k
\leq b_k\leq\cdots\leq b_2\leq b_1=\overline C.
\]
If $b_k-a_k\leq\mu r_{k+1}^{\widetilde\alpha}$, set
\[
a_{k+1}=a_k,\qquad b_{k+1}=b_k.
\]
Otherwise,
\[
b_k-a_k>\mu r_{k+1}^{\widetilde\alpha}.
\]
First suppose
\[
u\left(\frac{r_k e_n}{2},
-r_k^{\sigma(2-\gamma)}\left(\frac23\right)^{\sigma(2-\gamma)}\right)
\geq\left(\frac{r_k}{2}\right)^\sigma\frac{a_k+b_k}{2}.
\]
Define
\[
w(x,t)=\frac{u(r_kx,r_k^{\sigma(2-\gamma)}t)
-a_k(r_kx_n)_+^\sigma}{r_k^{\sigma+\widetilde\alpha}}.
\]
By homogeneity and \eqref{eq:subtract-power},
\[
\begin{cases}
\partial_tw-x_n^{\sigma\gamma}M^-w\geq-F_k,\\
\partial_tw-x_n^{\sigma\gamma}M^+w\leq F_k,
\end{cases}
\qquad\text{in }Q_1^+,
\]
where $w\geq0$ on $\overline{Q_1^+}$, $w=0$ on $\{x_n\leq0\}\times[-1,0]$ and $F_k:=Fr_k^{\sigma(1-\gamma)-\widetilde\alpha}\leq F\leq1$. 
Moreover,
\begin{equation}\label{eq:midpoint-gain}
w\left(\frac{e_n}{2},-\left(\frac23\right)^{\sigma(2-\gamma)}\right)
\geq2^{-\sigma-1}\frac{b_k-a_k}{r_k^{\widetilde\alpha}}
>2^{-\sigma-1-\widetilde\alpha}\mu.
\end{equation}

\textbf{Step 3. }
Fix $t\in[-1,0]$. For $1<|y|<r_1/r_k$ and $y_n>0$, choose $1\leq j<k$ such that $r_{j+1}\leq r_k|y|\leq r_j.$ 
Then
\[
|r_ky|\leq r_j,\qquad
-r_j^{\sigma(2-\gamma)}
\leq-r_k^{\sigma(2-\gamma)}
\leq r_k^{\sigma(2-\gamma)}t\leq0,
\]
so that $(r_ky,r_k^{\sigma(2-\gamma)}t)\in\overline{Q_{r_j}^+}$. By \eqref{eq:brackets},
\[
w(y,t)\geq-(a_k-a_j)r_k^{-\widetilde\alpha}(y_n)_+^\sigma,
\]
and
\begin{align*}
0\leq a_k-a_j
&\leq(b_j-a_j)-(b_k-a_k)\leq\mu\bigl(r_j^{\widetilde\alpha}-r_{k+1}^{\widetilde\alpha}\bigr).
\end{align*}
Since $r_j\leq2r_k|y|$,
\begin{align*}
w^-(y,t)
&\leq\mu\bigl(2^{\widetilde\alpha}|y|^{\widetilde\alpha}
-2^{-\widetilde\alpha}\bigr)(y_n)_+^\sigma\leq\mu\bigl(2^{\widetilde\alpha}|y|^{\sigma+\widetilde\alpha}
-2^{-\widetilde\alpha}|y|^\sigma\bigr).
\end{align*}
For $y_n\leq0$, $w^-(y,t)=0$. Therefore,
\begin{align*}
\int_{1<|y|<r_1/r_k}
\frac{w^-(y,t)}{|y|^{n+2\sigma}}\,dy
&\leq C_n\mu\int_1^\infty
\bigl(2^{\widetilde\alpha}r^{-1-\sigma+\widetilde\alpha}
-2^{-\widetilde\alpha}r^{-1-\sigma}\bigr)\,dr\notag\\
&=C_n\mu\left(\frac{2^{\widetilde\alpha}}{\sigma-\widetilde\alpha}
-\frac{2^{-\widetilde\alpha}}\sigma\right).
\end{align*}
For $|y|\geq r_1/r_k$, using $z=r_ky$ and the tail bound in Step~1 gives
\begin{align*}
\int_{|y|\geq r_1/r_k}\frac{w^-(y,t)}{|y|^{n+2\sigma}}\,dy
&\leq r_k^{\sigma-\widetilde\alpha}
\left[\int_{|z|\geq r_1}
\frac{|u(z,r_k^{\sigma(2-\gamma)}t)|}{|z|^{n+2\sigma}}\,dz
+C|a_k|\right]\notag\\
&\leq Cr_k^{\sigma-\widetilde\alpha}(1+\overline C)\notag\\
&\leq C.
\end{align*}
Consequently,
\[
\sup_{t\in[-1,0]}\int_{|y|>1}
\frac{w^-(y,t)}{|y|^{n+2\sigma}}\,dy
\leq C_n\mu\left(\frac{2^{\widetilde\alpha}}{\sigma-\widetilde\alpha}
-\frac{2^{-\widetilde\alpha}}\sigma\right)+C,
\]
where $C=C(n,\sigma,\gamma,\lambda,\Lambda)$, $k\geq1$, $0<\widetilde\alpha\leq\sigma/2$.

\medskip

\textbf{Step 4. }
As $\widetilde\alpha\downarrow0$, $\frac{2^{\widetilde\alpha}}{\sigma-\widetilde\alpha}
-\frac{2^{-\widetilde\alpha}}\sigma \rightarrow0$, $1-2^{-\widetilde\alpha}\rightarrow0$ and $c_h2^{-\sigma-1-\widetilde\alpha}
\rightarrow c_h2^{-\sigma-1}>0$. Choose $0<\widetilde\alpha<\min\{\sigma/2,\sigma(1-\gamma)\}$ such that
\[
c_HC_n\left(\frac{2^{\widetilde\alpha}}{\sigma-\widetilde\alpha}
-\frac{2^{-\widetilde\alpha}}\sigma\right)
+(1-2^{-\widetilde\alpha})
\leq\frac12c_h2^{-\sigma-1-\widetilde\alpha},
\]
and then choose $\mu$ such that
\[
\mu\geq2^{1+\widetilde\alpha}\overline C,\qquad
\frac{c_H(C+1)}{\mu}\leq\frac12c_h2^{-\sigma-1-\widetilde\alpha}.
\]
Thus
\begin{equation}\label{iteration}
\begin{split}
&c_H\left[C_n\mu\left(\frac{2^{\widetilde\alpha}}{\sigma-\widetilde\alpha}
-\frac{2^{-\widetilde\alpha}}\sigma\right)+C+1\right]\leq\mu\left[c_h2^{-\sigma-1-\widetilde\alpha}
-(1-2^{-\widetilde\alpha})\right].
\end{split}
\end{equation}
Lemma~\ref{tailhpf}, $F_k\leq1$, \eqref{eq:midpoint-gain}, and \eqref{iteration} yield, on $\overline{Q_{1/2}^+}$,
\begin{align*}
w(x,t)
&\geq c_h w\left(\frac{e_n}{2},
-\left(\frac23\right)^{\sigma(2-\gamma)}\right)x_n^\sigma-c_Hx_n^\sigma\left(F_k+\sup_{\tau\in[-1,0]}
\int_{|y|>1}\frac{w^-(y,\tau)}{|y|^{n+2\sigma}}\,dy\right)\\
&\geq\mu(1-2^{-\widetilde\alpha})x_n^\sigma.
\end{align*}
Hence, on $\overline{Q_{r_{k+1}}^+}$,
\begin{align*}
u(x,t)-a_kx_n^\sigma
&=r_k^{\sigma+\widetilde\alpha}
w\left(\frac{x}{r_k},\frac{t}{r_k^{\sigma(2-\gamma)}}\right)\geq\mu(1-2^{-\widetilde\alpha})
r_k^{\widetilde\alpha}x_n^\sigma.
\end{align*}
Set
\[
a_{k+1}=a_k+\mu(1-2^{-\widetilde\alpha})r_k^{\widetilde\alpha},
\qquad b_{k+1}=b_k.
\]
Then
\begin{align*}
a_{k+1}x_n^\sigma&\leq u(x,t)\leq b_{k+1}x_n^\sigma
\quad\text{on }\overline{Q_{r_{k+1}}^+},\\
a_k&\leq a_{k+1}\leq b_{k+1}=b_k,
\end{align*}
and
\begin{align*}
0\leq b_{k+1}-a_{k+1}
&=b_k-a_k-\mu(1-2^{-\widetilde\alpha})r_k^{\widetilde\alpha}\leq\mu2^{-\widetilde\alpha}r_k^{\widetilde\alpha}
=\mu r_{k+1}^{\widetilde\alpha}.
\end{align*}

If instead
\[
u\left(\frac{r_k e_n}{2},
-r_k^{\sigma(2-\gamma)}\left(\frac23\right)^{\sigma(2-\gamma)}\right)
<\left(\frac{r_k}{2}\right)^\sigma\frac{a_k+b_k}{2},
\]
apply the same argument to $-u$, using
\[
-b_jx_n^\sigma\leq-u(x,t)\leq-a_jx_n^\sigma
\quad\text{on }\overline{Q_{r_j}^+},\qquad1\leq j\leq k.
\]
Equivalently, set
\[
w(x,t)=\frac{b_k(r_kx_n)_+^\sigma
-u(r_kx,r_k^{\sigma(2-\gamma)}t)}{r_k^{\sigma+\widetilde\alpha}}.
\]
Then
\[
u(x,t)\leq
\bigl[b_k-\mu(1-2^{-\widetilde\alpha})r_k^{\widetilde\alpha}\bigr]x_n^\sigma
\quad\text{on }\overline{Q_{r_{k+1}}^+}.
\]
Taking
\[
a_{k+1}=a_k,\qquad
b_{k+1}=b_k-\mu(1-2^{-\widetilde\alpha})r_k^{\widetilde\alpha},
\]
gives
\begin{align*}
a_{k+1}x_n^\sigma&\leq u(x,t)\leq b_{k+1}x_n^\sigma
\quad\text{on }\overline{Q_{r_{k+1}}^+},\\
a_k&=a_{k+1}\leq b_{k+1}\leq b_k,\\
0&\leq b_{k+1}-a_{k+1}\leq\mu r_{k+1}^{\widetilde\alpha}.
\end{align*}
This proves \eqref{eq:brackets} for every $k\geq1$.

\textbf{Step 5.}
Since
\begin{align*}
-\overline C&\leq a_k\leq a_{k+1}\leq b_{k+1}\leq b_k\leq\overline C,\\
b_k-a_k&\leq\mu r_k^{\widetilde\alpha}\longrightarrow0,
\end{align*}
we have
\[
\widetilde a:=\lim_{k\to\infty}a_k=\lim_{k\to\infty}b_k,\qquad
a_k\leq\widetilde a\leq b_k,\qquad |\widetilde a|\leq\overline C.
\]
For $(x,t)\in\overline{Q_{1/2}^+}\setminus\{(0,0)\}$, choose $k\geq1$ such that
\[
r_{k+1}<\max\{|x|,|t|^{1/[\sigma(2-\gamma)]}\}\leq r_k.
\]
Then
\begin{align*}
|u(x,t)-\widetilde a x_n^\sigma|
&\leq(b_k-a_k)x_n^\sigma
\leq\mu r_k^{\widetilde\alpha}x_n^\sigma\leq2^{\widetilde\alpha}\mu
\left(|x|+|t|^{1/[\sigma(2-\gamma)]}\right)^{\widetilde\alpha}x_n^\sigma.
\end{align*}
At $(0,0)$, both sides vanish. Restoring the normalization gives
\begin{align*}
|\widetilde a|&\leq\overline C(N+F),\\
|u(x,t)-\widetilde a x_n^\sigma|&\leq\widetilde C(N+F)
\left(|x|+|t|^{1/[\sigma(2-\gamma)]}\right)^{\widetilde\alpha}x_n^\sigma,\quad \mbox{for }(x,t)\in\overline{Q_{1/2}^+}.
\end{align*}
This finishes the proof.
\end{proof}

\subsection{A Liouville theorem}
We next apply Proposition~\ref{flat} to ancient solutions in the
half-space. Under the growth condition below, rescaling on larger
cylinders makes the remainder in the boundary expansion tend to
zero. The solution is therefore a constant multiple of
$(x_n)_+^\sigma$. This classification will be used to identify
the limits of the rescaled solutions in the proof of
Theorem~\ref{point}.

\begin{Theorem}[Liouville theorem]\label{liou}
Let $v$ be continuous on $\R^n\times(-\infty,0]$, zero on $\{x_n\leq0\}\times(-\infty,0]$, and satisfy
\[
\partial_tv-x_n^{\sigma\gamma}M^-v\geq0,\qquad
\partial_tv-x_n^{\sigma\gamma}M^+v\leq0
\quad\text{in }\R^n_+\times(-\infty,0].
\]
Let $\widetilde\alpha$ be the exponent in Proposition~\ref{flat}. Assume, for some $0<\alpha<\widetilde\alpha$, that
\[
\|v\|_{L^\infty(Q_R^+)}\leq CR^{\sigma+\alpha}\quad(R\geq1).
\]
Then $v(x,t)=b(x_n)_+^\sigma$ for a constant $b\in\R$.
\end{Theorem}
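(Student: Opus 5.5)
Rescale $v$ and apply Proposition~\ref{flat} on large cylinders, then let $R\to\infty$. For $R\geq1$ set
\[
v_R(x,t)=R^{-\sigma-\alpha}\,v(Rx,R^{\sigma(2-\gamma)}t).
\]
By the homogeneity of $M^\pm$ (order $2\sigma$) and of the weight $x_n^{\sigma\gamma}$, $v_R$ satisfies the same homogeneous inequalities in $\R^n_+\times(-\infty,0]$, with $F=0$ and zero exterior data in $\{x_n\le 0\}$. The quantity $N$ of Proposition~\ref{flat} for $v_R$ is bounded independently of $R$. The local part is immediate: $\|v_R\|_{L^\infty(B_2\times[-1,0])}\le R^{-\sigma-\alpha}\|v\|_{L^\infty(Q_{2R}^+)}\le C2^{\sigma+\alpha}$.

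For the tail, decompose $\{|y|>2\}$ dyadically. On $2^j<|y|\le 2^{j+1}$ with $t\in[-1,0]$, the point $(y,t)$ lies in $Q_{2^{j+1}}^+$, where $|v_R|\le C2^{(j+1)(\sigma+\alpha)}$ by the growth hypothesis applied at radius $2^{j+1}R$. Hence
\[
\int_{|y|>2}\frac{|v_R(y,t)|}{|y|^{n+2\sigma}}dy\le C\sum_j 2^{j(\sigma+\alpha)}2^{-2j\sigma}<\infty,
\]
since $\alpha<\widetilde\alpha<\sigma$. Condition \eqref{eq:tail-class} for $v_R$ on compact time intervals follows from continuity together with this polynomial growth, by dominated convergence.

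Proposition~\ref{flat} then gives numbers $a_R$ with $|a_R|\le C$ and
\[
|v_R(x,t)-a_Rx_n^\sigma|\le C\bigl(|x|+|t|^{1/[\sigma(2-\gamma)]}\bigr)^{\widetilde\alpha}x_n^\sigma
\quad\text{on }\overline{Q_{1/2}^+}.
\]
Scaling back with $b_R:=R^{\alpha}a_R$, we get for $(x,t)\in \overline{Q_{R/2}^+}$
\[
|v(x,t)-b_R x_n^\sigma|\le C R^{\alpha-\widetilde\alpha}\bigl(|x|+|t|^{1/[\sigma(2-\gamma)]}\bigr)^{\widetilde\alpha}x_n^\sigma.
\]

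Now fix $(x,t)$ with $x_n>0$. For $R'>R$ both expansions hold on $Q_{R/2}^+$. Evaluating at a point such as $(e_n,0)$ gives $|b_R-b_{R'}|\le CR^{\alpha-\widetilde\alpha}$, so $b_R$ is Cauchy and converges to some $b\in\R$. Letting $R\to\infty$ in the displayed estimate with $(x,t)$ fixed, the right side tends to zero because $\alpha<\widetilde\alpha$. Hence $v(x,t)=bx_n^\sigma$ for $x_n>0$, and $v=0$ for $x_n\le0$, so $v=b(x_n)_+^\sigma$.

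The main obstacle is the uniform tail bound. It relies on $\sigma+\alpha<2\sigma$ so that the dyadic sum converges, and it must be checked that the growth hypothesis controls $v$ on all of $\R^n\times[-1,0]$ after rescaling, which holds since $Q_R^+$ has time depth $R^{\sigma(2-\gamma)}\ge1$. A second point is that Proposition~\ref{flat} requires the equations only in $Q_1^+$; the rescaled equations hold there, so it applies directly.
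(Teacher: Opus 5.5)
Your proposal is correct and follows the paper's proof essentially step by step. You use the same intrinsic rescaling $v_R$, the same uniform bound on $N$ via the dyadic tail sum (which needs $\alpha<\sigma$), Proposition~\ref{flat} with $F=0$, and the same scaling back to obtain the $R^{\alpha-\widetilde\alpha}$ remainder on $\overline{Q_{R/2}^+}$. The only cosmetic difference is in identifying $b$: the paper sets $b:=2^\sigma v(e_n/2,0)$ directly and shows $a_RR^\alpha\to b$, whereas your Cauchy argument at $(e_n,0)$ reaches the same conclusion and identifies $b=v(e_n,0)$.
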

\begin{proof}
For $R\geq1$, set
\[
v_R(x,t)=R^{-\sigma-\alpha}v(Rx,R^{\sigma(2-\gamma)}t).
\]
By homogeneity,
\[
\begin{cases}
\partial_tv_R-x_n^{\sigma\gamma}M^-v_R\geq0,\\
\partial_tv_R-x_n^{\sigma\gamma}M^+v_R\leq0,
\end{cases}
\qquad\text{in }\R^n_+\times(-\infty,0],
\]
and 
$v_R=0$ on $\{x_n\leq0\}\times(-\infty,0].$
Moreover, for $R,\rho\geq1$,
\begin{align*}
\|v_R\|_{L^\infty(Q_\rho^+)}
&=R^{-\sigma-\alpha}\|v\|_{L^\infty(Q_{R\rho}^+)}
\leq C\rho^{\sigma+\alpha},\\
\|v_R\|_{L^\infty(B_2\times[-1,0])}
&\leq\|v_R\|_{L^\infty(Q_2^+)}\leq C.
\end{align*}
Since $B_{2^{j+1}}\times[-1,0]\subset Q_{2^{j+1}}$ for $j\geq1$,
\begin{align*}
\sup_{t\in[-1,0]}\int_{|y|>2}
\frac{|v_R(y,t)|}{|y|^{n+2\sigma}}\,dy
&\leq\sum_{j=1}^\infty\sup_{t\in[-1,0]}
\int_{2^j<|y|\leq2^{j+1}}
\frac{|v_R(y,t)|}{|y|^{n+2\sigma}}\,dy\\
&\leq C\sum_{j=1}^\infty2^{-2j\sigma}
\|v_R\|_{L^\infty(Q_{2^{j+1}}^+)}\\
&\leq C\sum_{j=1}^\infty2^{-j(\sigma-\alpha)}\\
&\leq C_0,
\end{align*}
where $0<\alpha<\widetilde\alpha<\sigma$. Thus
\[
\|v_R\|_{L^\infty(B_2\times[-1,0])}
+\sup_{t\in[-1,0]}\int_{|y|>2}
\frac{|v_R(y,t)|}{|y|^{n+2\sigma}}\,dy
\leq C_0\qquad(R\geq1).
\]
Also,
\begin{align*}
|v_R(y,t)|&\leq C(1+|y|)^{\sigma+\alpha}
\quad\bigl((y,t)\in\R^n\times[-1,0]\bigr),\\
\int_{\R^n}\frac{(1+|y|)^{\sigma+\alpha}}{1+|y|^{n+2\sigma}}\,dy&<\infty.
\end{align*}
By continuity and dominated convergence,
\[
\int_{\R^n}\frac{|v_R(y,t)-v_R(y,s)|}{1+|y|^{n+2\sigma}}\,dy
\longrightarrow0\qquad(s\to t\text{ in }[-1,0]).
\]

Proposition~\ref{flat}, with $F=0$, gives $a_R\in\R$ such that
\begin{align*}
|a_R|&\leq C_0,\\
|v_R(x,t)-a_Rx_n^\sigma|&\leq C_0\left(|x|+|t|^{1/[\sigma(2-\gamma)]}\right)^{\widetilde\alpha}x_n^\sigma
\quad\text{on }\overline{Q_{1/2}^+}.
\end{align*}
Consequently, for $(x,t)\in\overline{Q_{R/2}^+}$,
\begin{align*}
|v(x,t)-a_RR^\alpha x_n^\sigma|
&=R^{\sigma+\alpha}
\left|v_R\left(\frac{x}{R},\frac{t}{R^{\sigma(2-\gamma)}}\right)
-a_R\left(\frac{x_n}{R}\right)^\sigma\right|\\
&\leq C_0R^{\alpha-\widetilde\alpha}
\left(|x|+|t|^{1/[\sigma(2-\gamma)]}\right)^{\widetilde\alpha}x_n^\sigma.
\end{align*}
Set $b:=2^\sigma v(e_n/2,0)$. For $R\geq1$,
\begin{align*}
|a_RR^\alpha-b|
&=2^\sigma\left|v(e_n/2,0)-a_RR^\alpha2^{-\sigma}\right|\\
&\leq C_0 2^{-\widetilde\alpha}R^{\alpha-\widetilde\alpha}\\
&\longrightarrow0\qquad\mbox{as }R\to\infty.
\end{align*}
For any fixed $(x,t)\in\R^n_+\times(-\infty,0]$, if $R>2\max\left\{1,|x|,|t|^{1/[\sigma(2-\gamma)]}\right\}$, 
then $(x,t)\in Q_{R/2}^+$,
and hence
\begin{align*}
|v(x,t)-bx_n^\sigma|
&\leq|v(x,t)-a_RR^\alpha x_n^\sigma|
+|a_RR^\alpha-b|x_n^\sigma\\
&\leq\left[C_0R^{\alpha-\widetilde\alpha}
\left(|x|+|t|^{1/[\sigma(2-\gamma)]}\right)^{\widetilde\alpha}
+|a_RR^\alpha-b|\right]x_n^\sigma\\
&\longrightarrow0\qquad\mbox{as }R\to\infty.
\end{align*}
Therefore,
\[
v(x,t)=
\begin{cases}
bx_n^\sigma,&x_n>0,\\
0,&x_n\leq0,
\end{cases}
\quad\mbox{for all }(x,t)\in\R^n\times(-\infty,0].
\]
This finishes the proof.
\end{proof}
\section{Boundary estimates in $C^{1,1}$ domains}\label{sec:Boundary estimate}
The Liouville theorem classifies the possible blow-up limits. We now establish the boundary estimates needed for compactness in curved domains. 

\subsection{Boundary growth estimates}

\begin{Lemma}\label{bdryreg}
Let $0\in\partial\Omega$, and let $\Omega$ be uniformly $C^{1,1}$ near $\partial\Omega\cap B_3$. Let $v:\R^n\times[-1,0]\to\R$ satisfy \eqref{eq:tail-class}, be continuous on
$(\overline\Omega\cap\overline B_2)\times[-1,0]$, and vanish on $\Omega^c\times[-1,0]$. Suppose
\[
\partial_tv-d^{\sigma\gamma}M^+v\leq F,\qquad
\partial_tv-d^{\sigma\gamma}M^-v\geq-F
\quad\text{in }(\Omega\cap B_2)\times(-1,0],
\]
where $F\geq0$. Set
\[
N=\|v\|_{L^\infty(B_2\times[-1,0])}
+\sup_{t\in[-1,0]}\int_{|y|>2}\frac{|v(y,t)|}{|y|^{n+2\sigma}}\,dy.
\]
Then
\[
|v(x,t)|\leq C(N+F)d(x)^\sigma
\quad\text{on }(\overline\Omega\times[-1,0])\cap\overline{Q_{1/2}}.
\]
The positive constant $C$ depends only on the structural parameters and the stated local geometric bounds. 
\end{Lemma}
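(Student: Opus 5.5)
The plan is to follow the proof of Lemma~\ref{nonhmlip} closely, replacing the flat barrier $\Phi$ by a barrier adapted to the curved boundary.

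\textbf{Step 1: Normalization.} Localize with a cutoff $\chi$ supported in $B_1$ and equal to $1$ on $B_{3/4}$, and set $w=\chi v$. As in Step~1 of Lemma~\ref{nonhmlip}, the tail and the local supremum give $|L(w-v)|\leq CN$ in $B_{5/8}$, uniformly in $L\in\mathcal L_*$. Since $d^{\sigma\gamma}\leq C$ on $B_2$, the extremal inequalities for $w$ in $(\Omega\cap B_{5/8})\times(-1,0]$ hold with right-hand side $C(N+F)$. After dividing by $C(N+F)$, we have $|w|\leq1$ globally and right-hand sides at most $1$. Multiplying by $(t+1)$ yields $\widetilde w$ with zero initial data and
\[
\partial_t\widetilde w-d^{\sigma\gamma}M^+\widetilde w\leq2 .
\]

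\textbf{Step 2: Barrier near a boundary point.} Fix $x_0\in\partial\Omega\cap\overline{B_{1/2}}$. We need a function $\Phi$ with the following properties: $\Phi=0$ in $\Omega^c$; $\Phi\leq Cd^\sigma$ near $x_0$; $\Phi\geq c>0$ where $d\geq\epsilon$ inside $B_{1/8}(x_0)$; and
\[
-d^{\sigma\gamma}M^+\Phi\geq c_*/2\quad\text{in }\{0<d<\epsilon\}\cap B_{1/8}(x_0).
\]
The first candidate is $\Phi=2\,\min\{d,1\}^\sigma-\min\{d,1\}^{\sigma(2-\gamma)}$, extended by zero, using a regularized distance. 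The key computation is the $C^{1,1}$ analogue of the half-space power formulas: for $L\in\mathcal L_*$ and $x\in\Omega$ near $\partial\Omega$,
\[
|L(d^\sigma)(x)|\leq C d(x)^{\sigma-1+\epsilon'}+C,\qquad L(d^{\sigma(2-\gamma)})(x)\geq c\,d(x)^{-\sigma\gamma}-Cd(x)^{-\sigma\gamma+\epsilon''}-C .
\]
These follow by comparing with $(x\cdot\nu-\text{const})_+^\beta$ at the nearest boundary point; the $C^{1,1}$ error is $O(|z|^2)$ in the region $|z|\lesssim1$. The Ros-Oton--Serra estimates for $L(d^\sigma)$ in $C^{1,1}$ domains give the bound $|L d^\sigma|\leq Cd^{\sigma-1}$. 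This is not good enough when $\sigma-1<-\sigma\gamma$.

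\textbf{Step 3: Handling the curvature error.} This is the main obstacle. To absorb the $d^{\sigma-1}$ error, I would first try to use the cheaper route of the paper: the proof of Lemma~\ref{elliptic comparison} already uses the Ros-Oton--Serra exterior-ball barrier $\varphi_2$ of Lemma~3.4 in \cite{ros2016boundary}. The analogous exterior-ball supersolution satisfies
\[
M^+\varphi_1\leq -c\,d^{-\sigma\gamma}\ \ \text{(or at least }\leq -c\text{) near }x_0,\qquad \varphi_1\leq Cd^\sigma,
\]
and is obtained from a tangent exterior ball by scaling and translating the radial barrier $(|x|^2-r^2)_+^\sigma$ minus a power correction. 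Since only $-d^{\sigma\gamma}M^+\Phi\geq c$ is needed, and $d^{\sigma\gamma}$ is bounded, it suffices to have $M^+\Phi\leq -c\,d^{-\sigma\gamma}$. A supersolution with $M^+\Phi\leq-1$ is not enough because of the factor $d^{\sigma\gamma}\to0$. The honest route is therefore to establish $L(d_{\rm reg}^{\sigma(2-\gamma)})\geq c d^{-\sigma\gamma}-C d^{\sigma(2-\gamma)-1}$ and $|L d_{\rm reg}^{\sigma}|\leq Cd^{\sigma-1}$, and then to use $\Phi=K d^\sigma-d^{\sigma(2-\gamma)}+$ a correction. If $d^{\sigma-1}$ dominates $d^{-\sigma\gamma}$, I would replace $d^\sigma$ by an exact local solution: the solution $\phi$ of $L\phi=0$ in $\Omega\cap B_{1/4}(x_0)$, with exterior data $\min\{d,1\}^\sigma$-type, satisfies $\phi\asymp d^\sigma$ by the linear boundary Harnack/Ros-Oton--Serra theory, and $L\phi=0$ removes the bad error. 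The remaining term $d^{\sigma(2-\gamma)}$ has error $d^{\sigma(2-\gamma)-1}\ll d^{-\sigma\gamma}$ since $\sigma(2-\gamma)-1>-\sigma\gamma-1+\sigma>-\sigma\gamma-1$; in fact the exponent gap is $2\sigma-1+\sigma\gamma-\sigma\gamma$. If this still fails, I would take a regularized distance with $|D^2d|\leq C$ and carry out the computation directly.

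\textbf{Step 4: Comparison and conclusion.} With the barrier in hand, add the cutoff $\rho$ vanishing on $B_{1/16}(x_0)$ as in Step~3 of Lemma~\ref{nonhmlip}. Apply Corollary~\ref{com for dis} on $D=B_{1/8}(x_0)\cap\{0<d<\epsilon\}$ to get $\widetilde w\leq K\Phi+\rho$. Each point of $\Omega\cap B_{1/2}$ with $d<\epsilon$ has a nearest boundary point in $B_{1/2+\epsilon}$, so we evaluate there. Points with $d\geq\epsilon$ are handled by $|w|\leq1\leq\epsilon^{-\sigma}d^\sigma$. Applying the same argument to $-w$ and undoing the normalization gives the claim.
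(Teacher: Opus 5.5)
The gap is in Steps~2--3. The one inequality the argument depends on is never established: $-d^{\sigma\gamma}M^+\Phi\ge c>0$ in $\{0<d<\epsilon\}$ near the curved boundary. The estimates you propose to base it on are wrong or insufficient.

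The bound $|L(d^\sigma)|\le Cd^{\sigma-1}$ is not the size of the operator error. It is the pointwise coefficient $\rho^{\sigma-1}$ in front of the quadratic $C^{1,1}$ deviation, before you integrate over $|z|<\rho/2$, which gains a factor $\rho^{2-2\sigma}$. If it were the true error, the barrier would fail whenever $\sigma(1+\gamma)<1$, which is why your argument does not close.

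The bound you settle on in Step~3, $L(d^{\beta})\ge c\,d^{-\sigma\gamma}-Cd^{\beta-1}$ with $\beta=\sigma(2-\gamma)$, would be fatal for $\sigma<1/2$. Indeed $d^{\beta-1}/d^{-\sigma\gamma}=d^{2\sigma-1}\to\infty$; you compared $\beta-1$ with $-\sigma\gamma-1$ instead of with $-\sigma\gamma$.

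The fallbacks do not repair this.
\begin{itemize}
\item A function with $L\phi=0$ for one fixed $L$ gives no control of $M^+\phi=\sup_{L'\in\mathcal L_*}L'\phi$. That is what comparison with the $M^+$-inequality for $\widetilde w$ requires.
\item An exterior-ball barrier is singular on $\partial B$, not on $\partial\Omega$. Its natural bound is $M^+\varphi_1\le -c\,\operatorname{dist}(x,B)^{-\sigma\gamma}$, not $-c\,d^{-\sigma\gamma}$. At boundary points $y\neq x_0$ with $y\notin\partial B$, $M^+\varphi_1$ stays bounded while $d^{\sigma\gamma}\to0$. So $-d^{\sigma\gamma}M^+\varphi_1\to0$ there, and it cannot absorb the constant forcing on any neighborhood of $x_0$.
\end{itemize}

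The missing idea is the correct computation of $L$ on truncated distance powers; your last-resort ``direct computation'' is the right route, but it must be done as follows, as in the paper.
\begin{itemize}
\item At $x_0$ with $\rho=d(x_0)$, rotate so that the nearest boundary point is $0$ and $\nabla s(x_0)=e_n$.
\item Compare $f=\min\{D,\ell\}^\sigma$ and $g=\min\{D,\ell\}^\beta$ with $h_1=(x_n)_+^\sigma$ and $h_2=(x_n)_+^\beta$, using $|D(x_0\pm z)-(\rho\pm z_n)_+|\le C|z|^2$.
\item For $|z|\le\rho/2$, the mean value theorem gives $|(f-h_1)(x_0\pm z)|\le C\rho^{\sigma-1}|z|^2$. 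Integrated against $|z|^{-n-2\sigma}$, this contributes $C\rho^{1-\sigma}$, which is bounded.
\item For $\rho/2<|z|<\ell/2$, the bound $|a^\sigma-b^\sigma|\le|a-b|^\sigma$ gives $C|z|^{2\sigma}$, hence a contribution $C\log(\ell/\rho)$.
\item The far range contributes a bounded amount.
\item The same splitting gives $|L(g-h_2)(x_0)|\le C_\ell$: the near region gives $C\rho^{1-\sigma\gamma}$, and the intermediate region gives $C|z|^{\beta+\sigma}$, which integrates to a bounded quantity.
\end{itemize}
Since $Lh_1=0$ and $Lh_2=c_L\rho^{-\sigma\gamma}$ with $c_L\ge c_*>0$, this yields $M^+(2f-g)\le C_\ell(1+|\log d|)-c_*d^{-\sigma\gamma}$. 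Because $d^{\sigma\gamma}|\log d|\to0$, the plain barrier $\Phi=2f-g$ works for every $\sigma,\gamma\in(0,1)$. No harmonic replacement of $d^\sigma$ is needed. With this barrier, your Steps~1 and~4 go through essentially as written, up to adjusting radii so that the barrier balls around base points in $\partial\Omega\cap\overline{B_{1/2+\epsilon}}$ stay inside the region where your localized inequalities hold.
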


\begin{proof}
If $N+F=0$, then $v=0$ on $(\overline\Omega\times[-1,0])\cap\overline{Q_{1/2}}$. Assume $N+F>0$.

\medskip

\textbf{Step 1. }
Choose $\chi\in C_c^\infty(B_2)$ with $0\leq\chi\leq1$ and $\chi\equiv 1$ on $B_{3/2}$, and set
\[
\bar v=\chi v.
\] 
Since $0\in\partial\Omega$, for $x\in\Omega\cap B_1$ we have $0<d(x)=\inf_{z\in\partial\Omega}|x-z|\leq|x|<1$ and $|x-y|\geq|y|-1\geq\frac13|y|$ whenever $|y|>3/2$. 
Thus, uniformly in $L\in\mathcal L_*$, we have
\begin{align*}
|L(\bar v-v)(x,t)|
&\leq C\int_{|y|>3/2}\frac{|v(y,t)|}{|x-y|^{n+2\sigma}}\,dy\\
&\leq C\|v\|_{L^\infty(B_2\times[-1,0])}
+C\int_{|y|>2}\frac{|v(y,t)|}{|y|^{n+2\sigma}}\,dy\\
&\leq CN.
\end{align*}
Consequently,
\[
\begin{cases}
\partial_t\bar v-d^{\sigma\gamma}M^+\bar v\leq F+CN,\\
\partial_t\bar v-d^{\sigma\gamma}M^-\bar v\geq-F-CN,
\end{cases}
\quad\text{in }(\Omega\cap B_1)\times(-1,0],
\]
and $\|\bar v\|_{L^\infty(\R^n\times[-1,0])}\leq N.$ 
We normalize by $C(N+F)$ and retain the notation $\bar v$:
\[
\bar v=\frac{\chi v}{C(N+F)},\qquad
\|\bar v\|_{L^\infty(\R^n\times[-1,0])}\leq1,
\]
and
\[
\begin{cases}
\partial_t\bar v-d^{\sigma\gamma}M^+\bar v\leq1,\\
\partial_t\bar v-d^{\sigma\gamma}M^-\bar v\geq-1,
\end{cases}
\quad\text{in }(\Omega\cap B_1)\times(-1,0].
\]
Set
\[
\widetilde v(x,t)=(t+1)\bar v(x,t),\qquad
\widetilde v(\cdot,-1)=0.
\]
Let $\widetilde\varphi$ touch $\widetilde v$ from above at $(x_0,t_0)$. Using the same patch cylinder, set
\[
\varphi=\frac{\widetilde\varphi}{t+1}.
\]
Then $\varphi\geq\bar v$, $\varphi(x_0,t_0)=\bar v(x_0,t_0)$, $\widetilde v_{\widetilde\varphi}(\cdot,t_0)
=(t_0+1)\bar v_\varphi(\cdot,t_0)$,
and
\begin{align*}
\partial_t\widetilde\varphi(x_0,t_0)
-d(x_0)^{\sigma\gamma}M^+\widetilde v_{\widetilde\varphi}(x_0,t_0)
&=(t_0+1)\bigl[\partial_t\varphi
-d^{\sigma\gamma}M^+\bar v_\varphi\bigr](x_0,t_0)
+\bar v(x_0,t_0)\\
&\leq(t_0+1)+\bar v(x_0,t_0)\\
&\leq2.
\end{align*}
The same calculation for $-\bar v$, using $M^+(-\bar v)=-M^-\bar v$, gives
\[
\partial_t(\pm\widetilde v)
-d^{\sigma\gamma}M^+(\pm\widetilde v)\leq2
\quad\text{in }(\Omega\cap B_1)\times(-1,0].
\]
For $s,t\in[-1,0]$, \eqref{eq:tail-class} gives
\begin{align*}
\int_{\R^n}\frac{|\bar v(y,t)-\bar v(y,s)|}{1+|y|^{n+2\sigma}}\,dy
&\leq\frac{1}{C(N+F)}
\int_{\R^n}\frac{|v(y,t)-v(y,s)|}{1+|y|^{n+2\sigma}}\,dy
\longrightarrow0,\\
\int_{\R^n}\frac{|\widetilde v(y,t)-\widetilde v(y,s)|}{1+|y|^{n+2\sigma}}\,dy
&\leq\int_{\R^n}\frac{|\bar v(y,t)-\bar v(y,s)|}{1+|y|^{n+2\sigma}}\,dy
+C|t-s|\longrightarrow0
\end{align*}
as $s\to t$. Also,
\[
\|\widetilde v(\cdot,t)\|_{L^\infty(\R^n)}
\leq t+1\longrightarrow0\qquad\mbox{as }t\downarrow-1.
\]

\textbf{Step 2. }
Set
\[
D(x)=\operatorname{dist}(x,\Omega^c),\qquad
\beta=\sigma(2-\gamma)\in(\sigma,2\sigma),\qquad
s(x)=D(x)-\operatorname{dist}(x,\Omega).
\]
Then
\[
D=s_+=
\begin{cases}
d&\text{in }\Omega,\\
0&\text{in }\Omega^c,
\end{cases}
\qquad\mbox{and }
|D(x)-D(y)|\leq|x-y|\quad(x,y\in\R^n).
\]
Choose $\ell\in(0,1/4)$, depending only on the local geometric bounds, such that
\[
s\in C^{1,1}\bigl(B_2\cap\{|s|<2\ell\}\bigr),\qquad
[\nabla s]_{C^{0,1}(B_2\cap\{|s|<2\ell\})}\leq C.
\]
Define
\[
f(x)=\min\{D(x),\ell\}^{\sigma},\qquad
g(x)=\min\{D(x),\ell\}^{\beta},\qquad
\Phi=2f-g.
\]
We have
$
r^\sigma\leq2r^\sigma-r^\beta\leq2r^\sigma$
for $0\leq r\leq\ell<1$, $\Phi\in C(\R^n)$, $\Phi=0\text{ in }\Omega^c$,
\begin{gather*}
\min\{D(x),\ell\}^\sigma\leq\Phi(x)\leq2D(x)^\sigma,\qquad
0\leq\Phi\leq2\ell^\sigma.
\end{gather*}
For $x_0\in\Omega\cap B_1$ with $d(x_0)<\ell/4$ and $|z|<\ell/2$,
\[
|x_0+z|<1+\ell/2<2,\qquad
|s(x_0+z)|\leq d(x_0)+|z|<3\ell/4,
\]
and hence
\[
B_{\ell/2}(x_0)\subset B_2\cap\{|s|<2\ell\}.
\]

\textbf{Step 3. }
Fix $x_0\in\Omega\cap B_1$, $\rho=d(x_0)\in(0,\ell/4)$, $z_0\in\partial\Omega$ and $|x_0-z_0|=\rho$.
Then $|z_0|\leq|x_0|+\rho<1+\ell/4<2.$ 
After a translation and rotation, we retain the same notation and have
\[
z_0=0,\qquad x_0=\rho e_n,\qquad
s(x_0)=\rho,\qquad
\nabla s(x_0)=\frac{x_0-z_0}{\rho}=e_n.
\]
The class $\mathcal L_*$ is invariant under these transformations. Set
\[
h_1(x)=(x_n)_+^\sigma,\qquad h_2(x)=(x_n)_+^\beta.
\]
For $|z|<\ell/2$,
\begin{align*}
|s(x_0\pm z)-(\rho\pm z_n)|&\leq C|z|^2,\\
|D(x_0\pm z)-(\rho\pm z_n)_+|
&=|s(x_0\pm z)_+-(\rho\pm z_n)_+|
\leq C|z|^2,\\
0\leq D(x_0\pm z)&\leq\rho+|z|<3\ell/4<\ell,\\
f(x_0\pm z)&=D(x_0\pm z)^\sigma,\\
g(x_0\pm z)&=D(x_0\pm z)^\beta,\\
(f-h_1)(x_0)&=(g-h_2)(x_0)=0.
\end{align*}
For $L\in\mathcal L_*$ and $q\in\{f-h_1,g-h_2\}$,
\[
|Lq(x_0)|
\leq\Lambda\left(
\int_{|z|\leq\rho/2}
+\int_{\rho/2<|z|<\ell/2}
+\int_{|z|\geq\ell/2}\right)
\frac{|\delta q(x_0,z)|}{|z|^{n+2\sigma}}\,dz.
\]

\emph{(i) $|z|\leq\rho/2$.}
In this region, $D(x_0\pm z),\ \rho\pm z_n\in[\rho/2,3\rho/2]$,
\begin{align*}
|(f-h_1)(x_0\pm z)|
&\leq C\rho^{\sigma-1}|D(x_0\pm z)-(\rho\pm z_n)|
\leq C\rho^{\sigma-1}|z|^2,\\
|(g-h_2)(x_0\pm z)|
&\leq C\rho^{\beta-1}|D(x_0\pm z)-(\rho\pm z_n)|
\leq C\rho^{\beta-1}|z|^2.
\end{align*}
Therefore,
\begin{align*}
\int_{|z|\leq\rho/2}
\frac{|\delta(f-h_1)(x_0,z)|}{|z|^{n+2\sigma}}\,dz
&\leq C\rho^{\sigma-1}\int_0^{\rho/2}r^{1-2\sigma}\,dr
\leq C\rho^{1-\sigma},\\
\int_{|z|\leq\rho/2}
\frac{|\delta(g-h_2)(x_0,z)|}{|z|^{n+2\sigma}}\,dz
&\leq C\rho^{\beta-1}\int_0^{\rho/2}r^{1-2\sigma}\,dr
\leq C\rho^{1-\sigma\gamma}.
\end{align*}

\emph{(ii) $\rho/2<|z|<\ell/2$.}
Here $0\leq D(x_0\pm z)$, $(\rho\pm z_n)_+\leq\rho+|z|<3|z|$,
\[
|(f-h_1)(x_0\pm z)|
\leq|D(x_0\pm z)-(\rho\pm z_n)_+|^\sigma
\leq C|z|^{2\sigma},
\]
and
\begin{align*}
|(g-h_2)(x_0\pm z)|
&\leq
\begin{cases}
|D(x_0\pm z)-(\rho\pm z_n)_+|^\beta,&\beta\leq1,\\
C|z|^{\beta-1}|D(x_0\pm z)-(\rho\pm z_n)_+|,&\beta>1,
\end{cases}\\
&\leq
\begin{cases}
C|z|^{2\beta},&\beta\leq1,\\
C|z|^{\beta+1},&\beta>1,
\end{cases}
\leq C|z|^{\beta+\sigma},
\end{align*}
since $\beta>\sigma$, $\sigma<1$, and $|z|<1$. Hence
\begin{align*}
\int_{\rho/2<|z|<\ell/2}
\frac{|\delta(f-h_1)(x_0,z)|}{|z|^{n+2\sigma}}\,dz
&\leq C\int_{\rho/2}^{\ell/2}\frac{dr}{r}
=C\log\frac{\ell}{\rho},\\
\int_{\rho/2<|z|<\ell/2}
\frac{|\delta(g-h_2)(x_0,z)|}{|z|^{n+2\sigma}}\,dz
&\leq C\int_{\rho/2}^{\ell/2}r^{\beta-\sigma-1}\,dr
\leq C\ell^{\beta-\sigma},
\end{align*}
with $\beta-\sigma=\sigma(1-\gamma)>0$.

\emph{(iii) $|z|\geq\ell/2$.}
Since $\rho<\ell/4\leq|z|/2$,
\begin{align*}
|(f-h_1)(x_0\pm z)|
&\leq\ell^\sigma+(\rho+|z|)^\sigma\leq C|z|^\sigma,\\
|(g-h_2)(x_0\pm z)|
&\leq\ell^\beta+(\rho+|z|)^\beta\leq C|z|^\beta.
\end{align*}
Thus
\begin{align*}
\int_{|z|\geq\ell/2}
\frac{|\delta(f-h_1)(x_0,z)|}{|z|^{n+2\sigma}}\,dz
&\leq C\int_{\ell/2}^{\infty}r^{-\sigma-1}\,dr
\leq C\ell^{-\sigma},\\
\int_{|z|\geq\ell/2}
\frac{|\delta(g-h_2)(x_0,z)|}{|z|^{n+2\sigma}}\,dz
&\leq C\int_{\ell/2}^{\infty}r^{\beta-2\sigma-1}\,dr
\leq C\ell^{-\sigma\gamma},
\end{align*}
with $\sigma>0$ and $2\sigma-\beta=\sigma\gamma>0$.

Combining (i)--(iii),
\begin{align*}
|L(f-h_1)(x_0)|
&\leq C\left(\rho^{1-\sigma}
+\log\frac{\ell}{\rho}+\ell^{-\sigma}\right)
\leq C_\ell(1+|\log\rho|),\\
|L(g-h_2)(x_0)|
&\leq C\left(\rho^{1-\sigma\gamma}
+\ell^{\beta-\sigma}+\ell^{-\sigma\gamma}\right)
\leq C_\ell,
\end{align*}
where $C_\ell=C_\ell(n,\sigma,\gamma,\lambda,\Lambda,\ell,
\text{local geometric bounds}).$ 
By Lemmas~2.2--2.3 in \cite{ros2016boundary},
\[
Lh_1(x_0)=0,\qquad
Lh_2(x_0)=c_L\rho^{-\sigma\gamma},\qquad
0<c_*\leq c_L\leq C_*
\quad(L\in\mathcal L_*),
\]
with structural $c_*,C_*>0$. Therefore,
\[
M^+f(x_0)\leq C_\ell(1+|\log\rho|),\qquad
M^-g(x_0)\geq c_*\rho^{-\sigma\gamma}-C_\ell,
\]
and, for $x\in\Omega\cap B_1$ with $0<d(x)<\ell/4$,
\begin{align*}
M^+\Phi(x)
&\leq2M^+f(x)-M^-g(x)\\
&\leq C_\ell(1+|\log d(x)|)-c_*d(x)^{-\sigma\gamma}.
\end{align*}

\textbf{Step 4. }
Since $r^{\sigma\gamma}(1+|\log r|)\rightarrow0$ as $r\downarrow0$, choose $\epsilon\in(0,\ell/4)$ with
\[
C_\ell r^{\sigma\gamma}(1+|\log r|)\leq c_*/2
\qquad(0<r<\epsilon).
\]
Set
\[
A_\epsilon=\Omega\cap B_1\cap\{d<\epsilon\}.
\]
Then
\[
-d^{\sigma\gamma}M^+\Phi\geq c_*/2
\quad\text{in }A_\epsilon.
\]
Choose $\rho\in C^\infty(\R^n)$ satisfying $0\leq\rho\leq1$,  $\rho=0$ on $B_{1/2}$, $\rho=1$ on $\R^n\setminus B_1$ and $\sup_{L\in\mathcal L_*}\|L\rho\|_{L^\infty(\R^n)}\leq C$, and take
\[
K\geq\max\left\{\epsilon^{-\sigma},
\frac{4+2C\epsilon^{\sigma\gamma}}{c_*}\right\}.
\]
For $x\in A_\epsilon$,
\begin{align*}
-d^{\sigma\gamma}M^+(K\Phi+\rho)
&\geq-Kd^{\sigma\gamma}M^+\Phi-d^{\sigma\gamma}M^+\rho\\
&\geq Kc_*/2-C\epsilon^{\sigma\gamma}\\
&\geq2.
\end{align*}
Moreover,
\[
K\Phi+\rho\in C^{1,1}_{\mathrm{loc}}(A_\epsilon)
\cap C(\R^n)\cap L^\infty(\R^n).
\]
At a backward lower contact $(x_0,t_0)$, since $\partial_t\varphi(x_0,t_0)\geq0$ and $M^+\bigl((K\Phi+\rho)_\varphi\bigr)(x_0,t_0)\leq M^+(K\Phi+\rho)(x_0)$,
we have
\[
\partial_t\varphi(x_0,t_0)
-d(x_0)^{\sigma\gamma}M^+\bigl((K\Phi+\rho)_\varphi\bigr)(x_0,t_0)
\geq-d(x_0)^{\sigma\gamma}M^+(K\Phi+\rho)(x_0)\geq2.
\]
The initial and exterior inequalities are $|\widetilde v(x,-1)|=0\leq K\Phi(x)+\rho(x)$ for $x\in\R^n$, and
\[
|\widetilde v(x,t)|\leq
\begin{cases}
1\leq\rho(x),&x\in\R^n\setminus B_1,\\
1\leq K\Phi(x),&x\in\Omega\cap B_1,\ d(x)\geq\epsilon,\\
0\leq K\Phi(x)+\rho(x),&x\in\Omega^c,
\end{cases}
\qquad t\in[-1,0].
\]
Corollary~\ref{com for dis}, applied to $\widetilde v$ and $-\widetilde v$, yields
\[
|\widetilde v|\leq K\Phi+\rho
\quad\text{on }\overline{A_\epsilon}\times[-1,0].
\]
On $(\Omega\times[-1,0])\cap\overline{Q_{1/2}}$, $\rho=0$, $t+1\geq1-2^{-\sigma(2-\gamma)}>0$, and
\[
|\bar v(x,t)|\leq
\begin{cases}
\displaystyle\frac{2K}{1-2^{-\sigma(2-\gamma)}}d(x)^\sigma,
&0<d(x)<\epsilon,\\[6pt]
1\leq\epsilon^{-\sigma}d(x)^\sigma,&d(x)\geq\epsilon.
\end{cases}
\]
Finally, $v=C(N+F)\bar v$ on $B_{1/2}\times[-1,0]$ and $v=0$ on $\partial\Omega\times[-1,0]$,
and hence
\[
|v(x,t)|\leq C(N+F)d(x)^\sigma
\quad\text{on }(\overline\Omega\times[-1,0])\cap\overline{Q_{1/2}}.
\]
\end{proof}

\subsection{Boundary compactness}

We establish the following compactness result, including convergence of the nonlocal terms.

\begin{Corollary}[Compactness under rescaling]\label{cor:compactness}
Let $T_j\to\infty$, let $\Omega_j\subset\R^n$ be domains, and set
\[
\begin{aligned}
d_j(x)&=\operatorname{dist}(x,\partial\Omega_j),\qquad x\in\Omega_j,\\
s_j(x)&=\operatorname{dist}(x,\Omega_j^c)-\operatorname{dist}(x,\Omega_j),
\qquad x\in\R^n.
\end{aligned}
\]
Assume that, for every $R>0$, there exist $r_R,C_R>0$ such that
\[
\|s_j-x_n\|_{L^\infty(B_R)}\longrightarrow0,\qquad
\sup_j\|s_j\|_{C^{1,1}(B_R\cap\{|s_j|<r_R\})}\leq C_R.
\]
Let $v_j:\R^n\times[-T_j,0]\to\R$ satisfy \eqref{eq:tail-class} with $I=[-T_j,0]$, and suppose $v_j|_{\overline\Omega_j\times[-T_j,0]}\in C(\overline\Omega_j\times[-T_j,0])$, $v_j=0$ on $\Omega_j^c\times[-T_j,0]$, and
\[
\begin{split}
\partial_tv_j-d_j^{\sigma\gamma}M^+v_j\leq F_j,\qquad
\partial_tv_j-d_j^{\sigma\gamma}M^-v_j\geq-F_j
\quad\text{in }\Omega_j\times(-T_j,0]
\end{split}
\]
in the viscosity sense. Assume that, for some $\alpha\in(0,\sigma)$, for all $S>0$ there exist $j_S\in\mathbb N,\ C_S>0$ such that $T_j>S$ when $j\geq j_S$,
\[
\begin{gathered}
|v_j(x,t)|\leq C_S(1+|x|)^{\sigma+\alpha}
\quad\mbox{for }j\geq j_S,\ (x,t)\in\R^n\times[-S,0],
\end{gathered}
\]
and
\[
F_j\geq0,\qquad
\sup_{(x,t)\in(\Omega_j\cap B_R)\times[-S,0]}F_j(x,t)
\longrightarrow0\quad(R,S>0).
\]
Then there exist a subsequence $v_{j_k}$ and $v\in C(\R^n\times(-\infty,0])$ such that, for every $R,S>0$,
\[
\begin{split}
\|v_{j_k}-v\|_{L^\infty(B_R\times[-S,0])}&\longrightarrow0,\\
\sup_{t\in[-S,0]}\int_{\R^n}
\frac{|v_{j_k}(x,t)-v(x,t)|}{1+|x|^{n+2\sigma}}\,dx
&\longrightarrow0.
\end{split}
\]
Moreover,
\[
v=0\quad\text{on }\{x_n\leq0\}\times(-\infty,0],
\]
and, in the viscosity sense,
\[
\partial_tv-x_n^{\sigma\gamma}M^+v\leq0,\qquad
\partial_tv-x_n^{\sigma\gamma}M^-v\geq0
\quad\text{in }\R^n_+\times(-\infty,0].
\]
\end{Corollary}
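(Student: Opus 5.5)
The plan is to get equicontinuity of the $v_j$ on compact subsets of $\R^n\times(-\infty,0]$, extract a locally uniformly convergent diagonal subsequence, upgrade to weighted $L^1$ convergence using the growth bound, and pass to the limit in the viscosity inequalities with a stability argument for nonlocal equations.

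\textbf{Step 1: uniform boundary decay.} Fix $R,S$. For $j\ge j_{2S}$ and any boundary point $z\in\partial\Omega_j\cap B_R$, and any time $t_0\in[-S,0]$, we apply Lemma~\ref{bdryreg} at unit scale, recentered at $(z,t_0)$. The hypotheses hold with local geometric bounds uniform in $j$, by the assumption on $s_j$. The polynomial growth gives $N\le C(R,S)$: the local sup is bounded directly, and the tail integral converges since $\sigma+\alpha<2\sigma$. Since $F_j$ is bounded, we obtain
\[
|v_j(x,t)|\le C(R,S)\,d_j(x)^\sigma
\]
near $\partial\Omega_j\cap B_R$ for $t\in[-S,0]$. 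Hence $v_j$ is uniformly small in a uniform neighborhood of the boundary.

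\textbf{Step 2: interior regularity.} At a point $x$ with $d_j(x)=\rho$, the coefficient $d_j^{\sigma\gamma}$ is comparable to $\rho^{\sigma\gamma}$ on $B_{\rho/2}(x)$. Rescaling by $\rho$ in space and by $\rho^{\sigma(2-\gamma)}$ in time makes it uniformly elliptic, so Theorem~\ref{thm:interior-estimates} applies. Its tail term is controlled by Step~1 together with the growth bound. This yields a H\"older modulus in the interior that is uniform in $j$. Combining it with the boundary decay of Step~1 gives a uniform modulus of continuity on $B_R\times[-S,0]$; both cases $|x-y|\lesssim\rho$ and $|x-y|\gtrsim\rho$ must be handled, the latter by the $d_j^\sigma$ bound. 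Arzel\`a--Ascoli and a diagonal argument then give local uniform convergence to a continuous $v$. Since $\Omega_j\to\{x_n>0\}$ locally, $v=0$ on $\{x_n\le0\}$.

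\textbf{Step 3: weighted $L^1$ convergence and the limit equation.} The dominating function $C_S(1+|x|)^{\sigma+\alpha}$ is integrable against $(1+|x|^{n+2\sigma})^{-1}$. Combined with local uniform convergence, this gives the weighted $L^1$ convergence uniformly in $t\in[-S,0]$. The same bound shows that $v$ satisfies \eqref{eq:tail-class}.

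For the limit inequalities, let $\varphi$ touch $v$ from above at $(x_0,t_0)$ with $x_{0,n}>0$. We may first make the contact strict by adding $|x-x_0|^4+(t_0-t)^2$. Then $\varphi+c_k$ touches $v_{j_k}$ from above at points $(x_k,t_k)\to(x_0,t_0)$, which lie in $\Omega_{j_k}$ for large $k$. We apply the subsolution inequality for $v_{j_k}$ and pass to the limit. The coefficients satisfy $d_{j_k}(x_k)\to x_{0,n}$, and $F_{j_k}\to0$ locally. For the nonlocal term, the patched functions converge locally uniformly near $x_0$ and in the weighted $L^1$ sense outside, so $M^+$ of the patch is upper semicontinuous along the sequence. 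This uses the standard stability argument of Caffarelli--Silvestre, adapted to the parabolic setting as in \cite{Chang2014}. The supersolution inequality is handled symmetrically.

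The main obstacle is Step~2: producing equicontinuity uniformly up to the moving boundaries. There the interior H\"older estimate degenerates as $\rho\to0$, and it must be matched with the $d_j^\sigma$ decay of Lemma~\ref{bdryreg}, including continuity in time near the boundary under the intrinsic scaling.
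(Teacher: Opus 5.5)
Your proposal is correct and follows essentially the same route as the paper. The ingredients match: the $d_j^\sigma$ boundary decay from Lemma~\ref{bdryreg}, interior H\"older estimates away from $\partial\Omega_j$, Arzel\`a--Ascoli with the growth bound giving the weighted $L^1$ convergence, and the standard stability argument for the limit inequalities.

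The one difference is in how equicontinuity is obtained. You rescale at the intrinsic scale $d_j(x)$. The paper instead applies the interior estimate on $\{d_j\geq\delta\}$ with $\delta$-dependent constants, and uses $|v_j|\leq C\delta^\sigma$ whenever both points lie within $2\delta$ of $\partial\Omega_j$. This gives equicontinuity in space and time at once and avoids the space/time case analysis you flag as the main obstacle.
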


\begin{proof}
Extend $d_j=0$ in $\Omega_j^c$. Fix $R,S>0$.

\medskip

\textbf{Step 1. Uniform boundary and interior estimates.}
For all sufficiently large $j$, we have $T_j>S+1$,
\begin{align*}
\|F_j\|_{L^\infty((\Omega_j\cap B_{R+2})\times[-S-1,0])}&\leq1,\\
\sup_{t\in[-S-1,0]}
\left[
\|v_j(\cdot,t)\|_{L^\infty(B_{R+2})}
+\int_{\R^n}\frac{|v_j(y,t)|}{1+|y|^{n+2\sigma}}\,dy
\right]&\leq C_{R,S}.
\end{align*}
Lemma~\ref{bdryreg}, after translation and intrinsic rescaling, gives
\[
|v_j(x,t)|\leq C_{R,S}d_j(x)^\sigma
\quad\text{on }\overline B_{R+1}\times[-S,0].
\]
For each $\delta>0$, the H\"older estimate \eqref{eq:interior-holder} in Theorem~\ref{thm:interior-estimates}, gives $\beta\in(0,1)$ such that, whenever $x,y\in\overline B_R$, $s,t\in[-S,0]$, and $\min\{d_j(x),d_j(y)\}\geq\delta$, we have
\[
|v_j(x,t)-v_j(y,s)|
\leq C_{R,S,\delta}
\bigl(|x-y|+|t-s|^{1/(2\sigma)}\bigr)^\beta.
\]
On the other hand, if $|x-y|\leq\delta$ and $\min\{d_j(x),d_j(y)\}\leq\delta$, then
$\max\{d_j(x),d_j(y)\}\leq2\delta$,
and therefore
\[
|v_j(x,t)-v_j(y,s)|
\leq C_{R,S}\bigl(d_j(x)^\sigma+d_j(y)^\sigma\bigr)
\leq C_{R,S}\delta^\sigma.
\]
Combining these estimates, for $0<\eta\leq\delta$,
\[
\sup_{j\geq j_0}
\sup_{\substack{x,y\in\overline B_R,\ s,t\in[-S,0]\\
|x-y|+|t-s|^{1/(2\sigma)}\leq\eta}}
|v_j(x,t)-v_j(y,s)|
\leq C_{R,S}\delta^\sigma+C_{R,S,\delta}\eta^\beta.
\]
Letting first $\eta\downarrow0$ and then $\delta\downarrow0$ proves equicontinuity.

\textbf{Step 2. Local uniform and weighted convergence.}
By the Arzel\`a--Ascoli theorem and a diagonal argument, there exists $v\in C(\R^n\times(-\infty,0])$ such that, after passing to a subsequence,
$\|v_j-v\|_{L^\infty(\overline B_R\times[-S,0])}\rightarrow0$ for $R,S>0$. Moreover, $d_j\rightarrow(x_n)_+$ locally uniformly on $\R^n$, $|v(x,t)|\leq C_{R,S}(x_n)_+^\sigma$ on $\overline B_R\times[-S,0]$, and $v=0$ on $\{x_n\leq0\}\times(-\infty,0]$. The growth bound yields, for $R\geq1$,
\begin{align*}
\sup_{j\geq j_0}\sup_{t\in[-S,0]}
\int_{|y|>R}\frac{|v_j(y,t)|}{|y|^{n+2\sigma}}\,dy
&\leq C_S\int_R^\infty r^{\alpha-\sigma-1}\,dr\leq C_S R^{\alpha-\sigma}\longrightarrow0
\quad\mbox{as }R\to\infty.
\end{align*}
The same bound holds for $v$. Consequently,
\begin{align*}
\sup_{t\in[-S,0]}
\int_{\R^n}\frac{|v_j(y,t)-v(y,t)|}{1+|y|^{n+2\sigma}}\,dy
&\leq C\|v_j-v\|_{L^\infty(B_R\times[-S,0])}
+C_S R^{\alpha-\sigma}.
\end{align*}
Taking $j\to\infty$ and then $R\to\infty$ gives
\[
\sup_{t\in[-S,0]}
\int_{\R^n}\frac{|v_j(y,t)-v(y,t)|}{1+|y|^{n+2\sigma}}\,dy
\longrightarrow0.
\]

\textbf{Step 3. The limit equations.}
Let $\varphi$ touch $v$ strictly from above at $(x_0,t_0)\in\R^n_+\times(-\infty,0]$. Since $v_j\to v$ locally uniformly, there exist points $(x_j,t_j)$ and constants $c_j$ such that $(x_j,t_j)\rightarrow(x_0,t_0)$, $c_j\rightarrow0$, and $\varphi_j:=\varphi+c_j$ touches $v_j$ from above
at $(x_j,t_j)$.

Choose a fixed $r>0$ sufficiently small that $\overline{B_r(x_0)}\subset\R^n_+$ and
$\overline{B_r(x_j)}\subset\Omega_j$
for all $j\geq j_0$.
For the patches in Definition~\ref{defn:vis},
\begin{align*}
\int_{|z|<r}
\frac{|\delta\varphi_j(x_j,z,t_j)-\delta\varphi(x_0,z,t_0)|}
{|z|^{n+2\sigma}}\,dz&\longrightarrow0,\\
\sup_{r\leq|z|\leq R}
|\delta v_j(x_j,z,t_j)-\delta v(x_0,z,t_0)|&\longrightarrow0,\\
\sup_{j\geq j_0}\int_{|z|>R}
\frac{|\delta v_j(x_j,z,t_j)|+|\delta v(x_0,z,t_0)|}
{|z|^{n+2\sigma}}\,dz&\leq CR^{\alpha-\sigma}.
\end{align*}
Consequently,
\begin{align*}
&\limsup_{j\to\infty}\sup_{L\in\mathcal L_*}
\left|
L(v_j)_{\varphi_j,r}(x_j,t_j)
-Lv_{\varphi,r}(x_0,t_0)
\right|\leq C R^{\alpha-\sigma}
\longrightarrow0
\qquad\text{as }R\to\infty.
\end{align*}
As in Theorem~5.3 of \cite{changstability},
\begin{align*}
\partial_t\varphi(x_0,t_0)
&=\lim_{j\to\infty}\partial_t\varphi_j(x_j,t_j)\\
&\leq\lim_{j\to\infty}
\left[d_j(x_j)^{\sigma\gamma}
M^+(v_j)_{\varphi_j,r}(x_j,t_j)+F_j(x_j,t_j)\right]\\
&=x_{0,n}^{\sigma\gamma}M^+v_{\varphi,r}(x_0,t_0).
\end{align*}
If $\varphi$ touches $v$ from below at $(x_0,t_0)$, the same argument gives
\[
\partial_t\varphi(x_0,t_0) \geq x_{0,n}^{\sigma\gamma}M^-v_{\varphi,r}(x_0,t_0).
\]
Hence
\[
\begin{cases}
\partial_tv-x_n^{\sigma\gamma}M^+v\leq0,\\
\partial_tv-x_n^{\sigma\gamma}M^-v\geq0,
\end{cases}
\quad\text{in }\R^n_+\times(-\infty,0].
\]
\end{proof}
\section{Boundary expansions and proofs of the main results}\label{sec:ratio}
Let $C_1,C_2$ be the constants in Theorem~\ref{thm1}. Throughout this section, the auxiliary extremal operators are taken with the enlarged bounds
\[
\lambda_0=\lambda\min\{1,C_1^\gamma\},\qquad
\Lambda_0=\Lambda\max\{1,C_2^\gamma\}.
\]
We retain the notation $M^\pm$. Indeed, if $C_1\leq u/d^\sigma\leq C_2$, then
\[
u^\gamma L=d^{\sigma\gamma}\left(\frac{u}{d^\sigma}\right)^\gamma L,
\]
and the resulting stable operator belongs pointwise to the enlarged class. The exponent $\widetilde\alpha$ from Section~\ref{sec:liouville}, the boundary estimates, and the Liouville theorem are all used with these same bounds. Their constants and exponents may consequently depend on $T,c_1,c_2$ and $\Omega$.

Let $\psi$ be the first eigenfunction in Lemma \ref{firsteigenfunction}. We will subtract multiples of $\psi$ from viscosity solutions, since the regularity of $\psi$ suffices for the eigenfunction equation to hold classically. 

\subsection{Approximation by the first eigenfunction}
We now combine Corollary~\ref{cor:compactness} and
Theorem~\ref{liou} to obtain a boundary expansion for the solution $u$ in Theorem~\ref{mainthm}.
Near each boundary point and fixed time, $u$ can be approximated
by a constant multiple of the first eigenfunction $\psi$, with
a remainder of higher order. This expansion will be used to
prove the H\"older continuity of $u/\psi$.

\begin{Theorem}[Boundary expansion]\label{point}
Under the hypotheses of Theorem~\ref{mainthm}, fix $0<\alpha<\widetilde\alpha$. There exist $r_0>0$ and $C>0$, depending only on the data in that theorem and on $\alpha$, such that the following holds. For every $z\in\partial\Omega$ and $t_0\in[-T/2,0]$, there is a unique number $a(z,t_0)$, with $|a(z,t_0)|\leq C$, satisfying
\begin{equation}\label{bdryest}
|u(x,t)-a(z,t_0)\psi(x)|
\leq C\left(|x-z|^{\sigma+\alpha}+|t-t_0|^{\frac{\sigma+\alpha}{\sigma(2-\gamma)}}\right)
\end{equation}
on $(\overline\Omega\times[-T,0])\cap\overline{Q_{r_0}(z,t_0)}$. Here $\psi$ is the normalized first eigenfunction from Lemma~\ref{firsteigenfunction}, and $r_0^{\sigma(2-\gamma)}<T/4$. The estimate also holds on the full cylinder when $u$ and $\psi$ are extended by zero.
\end{Theorem}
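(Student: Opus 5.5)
The plan is a blow-up and contradiction argument in the style of Ros-Oton--Serra, with the Liouville Theorem~\ref{liou} classifying the limit. Throughout we use Theorem~\ref{thm1} to write $u^\gamma L=d^{\sigma\gamma}(u/d^\sigma)^\gamma L$ with an operator in the enlarged class.

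\textbf{The equation for the error.} For $z\in\partial\Omega$, $t_0\in[-T/2,0]$, $r\in(0,r_0]$ and $a\in\R$, set $w=u-a\psi$. Since $\psi$ is time independent and $L\psi=-\lambda_1\psi$ classically in $\Omega$, linearity of $L$ at contact points gives
\[
\partial_tw-u^\gamma Lw=a\lambda_1u^\gamma\psi .
\]
Hence $w$ satisfies the two extremal inequalities $\partial_tw-d^{\sigma\gamma}M^\pm w\lessgtr\pm F$ in $\Omega_T$, with $F=C|a|d^{\sigma(1+\gamma)}$.

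\textbf{Projections and the blow-up quantity.} For each $z,t_0,r$, let $a_r=a_r(z,t_0)$ be the $L^2$ projection coefficient
\[
a_r=\operatorname*{arg\,min}_{a}\int_{Q_r(z,t_0)}(u-a\psi)^2 .
\]
Define
\[
\theta(r)=\sup_{z,t_0}\ \sup_{r'\in[r,r_0]}(r')^{-\sigma-\alpha}\,\|u-a_{r'}(z,t_0)\psi\|_{L^\infty(Q_{r'}(z,t_0))}.
\]
This $\theta$ is finite for $r>0$ by Theorem~\ref{thm1} and nonincreasing. The core claim is $\theta(0^+)<\infty$. Granting it, standard dyadic telescoping gives $|a_{2r}-a_r|\le C\theta r^{\alpha}$, using $\psi\asymp d^\sigma$ and the $L^2$-optimality. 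So $a_r\to a(z,t_0)$, and \eqref{bdryest} follows for $|x-z|,|t-t_0|$ small. Larger distances inside $Q_{r_0}$ are trivial by boundedness, and the bound $|a|\le C$ follows from $u\le C_2d^\sigma$ and $\psi\ge cd^\sigma$. Uniqueness holds because two admissible coefficients $a,a'$ give $|a-a'|\psi(x)\le C|x-z|^{\sigma+\alpha}$ along the normal direction, which is impossible since $\psi\ge cd^\sigma$ and $\alpha>0$. The extension by zero is automatic because both sides are supported consistently.

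\textbf{Contradiction and rescaling.} Suppose $\theta(r)\to\infty$. Choose $r_m\downarrow0$ and points $(z_m,t_m)$ with
\[
r_m^{-\sigma-\alpha}\|u-a_{r_m}\psi\|_{L^\infty(Q_{r_m}(z_m,t_m))}\ge\tfrac12\theta(r_m).
\]
Rotate so that the inner normal at $z_m$ is $e_n$, and set
\[
v_m(x,t)=\frac{(u-a_{r_m}\psi)(z_m+r_mx,\;t_m+r_m^{\sigma(2-\gamma)}t)}{r_m^{\sigma+\alpha}\theta(r_m)} .
\]
Then we have the following.
\begin{itemize}
\item[(i)] $\|v_m\|_{L^\infty(Q_1)}\ge1/2$.
\item[(ii)] $\int_{Q_1}v_m\,\psi(z_m+r_m\cdot)=0$, from the projection.
\item[(iii)] $|v_m|\le C(1+|x|)^{\sigma+\alpha}$ on $\R^n\times[-S,0]$ for $m\ge m_S$. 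In $Q_R$ with $Rr_m\le r_0$ this comes from monotonicity of $\theta$ and the telescoping $|a_{Rr_m}-a_{r_m}|\le C\theta(r_m)(Rr_m)^\alpha$. Beyond that range we use the trivial bounds $|u|,|a\psi|\le C$ together with $\theta(r_m)\to\infty$.
\end{itemize}
The rescaled domains satisfy the $C^{1,1}$ flattening hypotheses of Corollary~\ref{cor:compactness}, with times $T_m\ge(T/2)r_m^{-\sigma(2-\gamma)}\to\infty$. The forcing scales as
\[
F_m\le C|a_{r_m}|\,r_m^{2\sigma-\alpha}\,\theta(r_m)^{-1}\,(\text{local powers of }|x|),
\]
and $|a_{r_m}|\le C$ because each $a_r$ is an $L^2$ projection of a function bounded by $C\psi$-scale. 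Hence $F_m\to0$ locally. The Corollary then gives a limit $v$ solving the half-space extremal inequalities with growth $R^{\sigma+\alpha}$, $\alpha<\widetilde\alpha$. By Theorem~\ref{liou}, $v=b(x_n)_+^\sigma$.

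\textbf{Passing the constraints to the limit.} Since $\psi/d^\sigma\in C^\beta(\overline\Omega)$ and $\psi/d^\sigma\ge c>0$, the weight satisfies
\[
r_m^{-\sigma}\psi(z_m+r_m x)\to q(x_n)_+^\sigma\quad\text{locally uniformly along a subsequence},\qquad q\ge c>0 .
\]
Passing (ii) to the limit gives $bq\int_{Q_1}x_n^{2\sigma}=0$, so $b=0$. This contradicts (i), which also passes to the limit by uniform convergence.

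\textbf{Main obstacle.} I expect the most delicate step to be (iii): obtaining the global growth bound uniformly, including the tail beyond scale $r_0/r_m$ and the time extent. This is needed so that the weighted $L^1$ convergence in Corollary~\ref{cor:compactness} applies. A second delicate point is checking that the forcing $a\lambda_1u^\gamma\psi$ really vanishes after rescaling. I would handle (iii) by the dyadic sum of $|a_{2^kr}-a_{2^{k+1}r}|$, and by restricting $t_0\le0$, $t_0\ge-T/2$ with $r_0^{\sigma(2-\gamma)}<T/4$ so that the backward cylinders stay inside $[-T,0]$.
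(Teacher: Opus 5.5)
Your proposal follows the paper's proof essentially step by step. The $L^2$ projection coefficients, the monotone quantity $\theta$, the blow-up normalized by $r^{\sigma+\alpha}\theta(r)$ with the orthogonality constraint, the dyadic telescoping for the growth bound, Corollary~\ref{cor:compactness}, Theorem~\ref{liou}, and orthogonality forcing $b=0$ all appear there in the same order.

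One point needs repair: uniformity. You take the supremum defining $\theta$ only over base points $(z,t_0)$ for a single solution $u$. The contradiction argument then shows only that $\lim_{r\downarrow0}\theta(r)<\infty$ for that particular $u$ (and $L$). The constant $C$ you obtain is therefore this limit, which depends on $u$ rather than only on the data as the statement asserts. This uniformity is genuinely needed later, e.g.\ when Theorem~\ref{mainthm} is applied to the whole family of rescaled solutions in the proof of Theorem~\ref{thm:fpme}.

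The paper avoids this by running the contradiction over a sequence of solutions $u_k$, operators $L_k\in\mathcal L_*$ with their eigenfunctions $\psi_k$, and base points, and by taking $\sup_k$ in $\theta$. Theorem~\ref{thm1}, Lemma~\ref{firsteigenfunction}, the compactness corollary and the Liouville theorem are all uniform over this class. So your argument goes through verbatim once you enlarge the supremum this way, choosing the near-maximizing scale and solution together as the paper does.

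A harmless slip: the error equation is $\partial_tw-u^\gamma Lw=-a\lambda_1u^\gamma\psi$, not $+a\lambda_1u^\gamma\psi$.
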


\begin{proof}
Suppose that \eqref{bdryest} fails uniformly with the data fixed. After translations and rotations, take
\[
0\in\partial\Omega^{(k)},\qquad \nu_k(0)=e_n,\qquad
\Omega^{(k)}\subset B_C,\qquad L_k\in\mathcal L_*,
\]
Here $\Omega^{(k)}$ is a translated and rotated copy of $\Omega$, and $\nu_k$ is its inward unit normal.
\begin{align*}
\partial_tu_k&=u_k^\gamma L_ku_k
\quad\text{in }\Omega^{(k)}\times(-T/2,0],\\
L_k\psi_k&=-\lambda_{1,k}\psi_k\quad\text{in }\Omega^{(k)},\\
\|\psi_k\|_{L^2}&=1,\qquad
0<\lambda_{1,k}\leq C,\\
u_k&=\psi_k=0\quad\text{on }(\Omega^{(k)})^c\times[-T/2,0].
\end{align*}
Set $d^{(k)}(x)=\operatorname{dist}(x,(\Omega^{(k)})^c)$. Lemma~\ref{firsteigenfunction} and Theorem~\ref{thm1} give
\[
C_1(d^{(k)})^\sigma\leq u_k\leq C_2(d^{(k)})^\sigma,
\qquad
c(d^{(k)})^\sigma\leq\psi_k\leq C(d^{(k)})^\sigma,\qquad
\|u_k\|_{L^\infty}+\|\psi_k\|_{L^\infty}\leq C.
\]
Choose $r_0>0$, depending only on the uniform geometric bounds, with $r_0^{\sigma(2-\gamma)}<T/4$. Then
\begin{equation}\label{eq:projection-bounds}
0\leq u_k\leq C\psi_k,\qquad
cr^\sigma\leq\|\psi_k\|_{L^\infty(B_r)}\leq Cr^\sigma
\quad(0<r\leq r_0).
\end{equation}

\textbf{Step 1. }
For $0<r\leq r_0$, set
\[
a_k(r)=\frac{\int_{Q_r}u_k\psi_k\,dx\,dt}
{\int_{Q_r}\psi_k^2\,dx\,dt},
\qquad
\theta(r)=\sup_k\sup_{r\leq r'\leq r_0}
\frac{\|u_k-a_k(r')\psi_k\|_{L^\infty(Q_{r'})}}
{(r')^{\sigma+\alpha}}.
\]
The bounds $c(d^{(k)})^\sigma\leq\psi_k\leq C(d^{(k)})^\sigma$
and the uniform $C^{1,1}$ regularity of $\Omega^{(k)}$ give
\[
cr^{n+2\sigma+\sigma(2-\gamma)}
\leq\int_{Q_r}\psi_k^2\,dx\,dt
\leq Cr^{n+2\sigma+\sigma(2-\gamma)}.
\]
Moreover, since $0\leq u_k\leq C\psi_k$, we have
\[
0\leq\int_{Q_r}u_k\psi_k\,dx\,dt
\leq C\int_{Q_r}\psi_k^2\,dx\,dt.
\]
Thus $0\leq a_k(r)\leq C$. Combining this with
\eqref{eq:projection-bounds}, we obtain
\[
\|u_k-a_k(r)\psi_k\|_{L^\infty(Q_r)}\leq Cr^\sigma,
\]
and hence
\[
0\leq\theta(r)\leq Cr^{-\alpha}<\infty
\qquad\mbox{for }0<r\leq r_0.
\]
Finally, $\theta$ is nonincreasing: if $0<r_1\leq r_2\leq r_0$,
then $[r_2,r_0]\subset[r_1,r_0]$, so its definition gives
$\theta(r_1)\geq\theta(r_2)$.

For $0<r\leq r_0/2$,
\begin{align}\label{ak}
|a_k(2r)-a_k(r)|
&\leq\frac{\|u_k-a_k(2r)\psi_k\|_{L^\infty(Q_r)}
+\|u_k-a_k(r)\psi_k\|_{L^\infty(Q_r)}}
{\|\psi_k\|_{L^\infty(B_r)}}\leq C\theta(r)r^\alpha.
\end{align}
The same estimate gives
\[
|a_k(r')-a_k(r)|\leq C\theta(r)r^\alpha
\qquad\mbox{when }r\leq r'\leq2r\leq r_0.
\]
If $M:=\sup_{0<r\leq r_0}\theta(r)<\infty$, then, for $m>\ell$,
\[
|a_k(2^{-m}r)-a_k(2^{-\ell}r)|
\leq CM\sum_{i=\ell}^{m-1}(2^{-i}r)^\alpha
\leq CM(2^{-\ell}r)^\alpha.
\]
Consequently, $a_k(r)\longrightarrow a_k^*$ as $r\downarrow0$, $|a_k(r)-a_k^*|\leq CMr^\alpha$, and
\begin{align*}
\|u_k-a_k^*\psi_k\|_{L^\infty(Q_r)}
&\leq\|u_k-a_k(r)\psi_k\|_{L^\infty(Q_r)}
+|a_k(r)-a_k^*|\|\psi_k\|_{L^\infty(B_r)}\\
&\leq CMr^{\sigma+\alpha}.
\end{align*}
This contradicts the assumed failure. Hence
\[
\theta(r)\longrightarrow\infty\qquad \mbox{as }r\downarrow0.
\]

Choose $\rho_j\downarrow0$ with $\theta(\rho_j)\geq j$, and select $r_j\in[\rho_j,r_0]$ and $k_j\in\mathbb N$ such that
\[
\frac{\|u_{k_j}-a_{k_j}(r_j)\psi_{k_j}\|_{L^\infty(Q_{r_j})}}
{r_j^{\sigma+\alpha}}
\geq\frac12\theta(\rho_j).
\]
Since
\[
\frac12\theta(\rho_j)\leq Cr_j^{-\alpha},
\qquad
\theta(r_j)\leq\theta(\rho_j),
\]
we have $r_j\to0$, $\theta(r_j)\to\infty$, and
\[
\frac12\theta(r_j)\leq
\frac{\|u_{k_j}-a_{k_j}(r_j)\psi_{k_j}\|_{L^\infty(Q_{r_j})}}
{r_j^{\sigma+\alpha}}
\leq\theta(r_j).
\]

Since $r_j\to0$ and $\theta(r_j)\to\infty$, after passing to a
subsequence we may assume that $r_j\downarrow0$ and
$\theta(r_j)\geq1$ for every $j$.
Set
\[
\Omega_j=r_j^{-1}\Omega^{(k_j)},\qquad
T_j=\frac{T}{2r_j^{\sigma(2-\gamma)}},
\]
and define the rescaled functions
\[
v_j(x,t)=
\frac{u_{k_j}(r_jx,r_j^{\sigma(2-\gamma)}t)
-a_{k_j}(r_j)\psi_{k_j}(r_jx)}
{r_j^{\sigma+\alpha}\theta(r_j)}
\quad\text{for }(x,t)\in\R^n\times[-T_j,0].
\]
Then $T_j\to\infty$ and
\[
v_j=0\quad\text{on }\Omega_j^c\times[-T_j,0].
\]
For each $j$, the function $v_j$ is continuous and vanishes outside
the bounded domain $\Omega_j$. Thus, by dominated convergence,
$v_j$ satisfies \eqref{eq:tail-class} on $[-T_j,0]$. Moreover,
\begin{equation}\label{eq:normalized-error}
\frac12\leq\|v_j\|_{L^\infty(Q_1)}\leq1,\qquad
\int_{Q_1}v_j(x,t)\frac{\psi_{k_j}(r_jx)}{r_j^\sigma}\,dx\,dt=0.
\end{equation}

\textbf{Step 2.}
For $R=2^m\geq1$ and $Rr\leq r_0$, \eqref{ak} gives
\[
|a_k(Rr)-a_k(r)|
\leq C\sum_{i=0}^{m-1}\theta(2^ir)(2^ir)^\alpha
\leq C\theta(r)(Rr)^\alpha.
\]
Thus, for $Rr_j\leq r_0$,
\begin{align*}
\|v_j\|_{L^\infty(Q_R)}
&=\frac{\|u_{k_j}-a_{k_j}(r_j)\psi_{k_j}\|_{L^\infty(Q_{Rr_j})}}
{r_j^{\sigma+\alpha}\theta(r_j)}\\
&\leq\frac{(Rr_j)^{\sigma+\alpha}\theta(Rr_j)
+C\theta(r_j)(Rr_j)^\alpha(Rr_j)^\sigma}
{r_j^{\sigma+\alpha}\theta(r_j)}\\
&\leq CR^{\sigma+\alpha}.
\end{align*}
For arbitrary $1\leq R\leq r_0/(2r_j)$, choose $2^{m-1}<R\leq2^m\leq2R$. Then
\[
2^mr_j\leq r_0,\qquad
\|v_j\|_{L^\infty(Q_R)}
\leq\|v_j\|_{L^\infty(Q_{2^m})}
\leq CR^{\sigma+\alpha}.
\]
For $Rr_j\geq r_0/2$,
\[
\sup_{t\in[-T_j,0]}\|v_j(\cdot,t)\|_{L^\infty(B_R)}
\leq\frac{C}{r_j^{\sigma+\alpha}\theta(r_j)}
\leq Cr_j^{-\sigma-\alpha}
\leq CR^{\sigma+\alpha}.
\]
Fix $S>0$. For large $j$, $T_j>S$. With $\rho=2\max\{1,|x|,S^{1/[\sigma(2-\gamma)]}\}$, the preceding two bounds give
\[
|v_j(x,t)|\leq C\rho^{\sigma+\alpha}
\leq C_S(1+|x|)^{\sigma+\alpha}
\qquad(-S\leq t\leq0).
\]
Hence, for $R\geq1$,
\[
\sup_{t\in[-S,0]}\int_{|y|>R}
\frac{|v_j(y,t)|}{|y|^{n+2\sigma}}\,dy
\leq C_S\int_R^\infty r^{\alpha-\sigma-1}\,dr
\leq C_S R^{\alpha-\sigma}.
\]
In particular,
\begin{equation}\label{eq:blowup-tail}
\begin{split}
|v_j(x,t)|&\leq C_S(1+|x|)^{\sigma+\alpha}
\quad(x\in\R^n,\ -S\leq t\leq0),\\
\sup_{t\in[-S,0]}\int_{|y|>R}
\frac{|v_j(y,t)|}{|y|^{n+2\sigma}}\,dy
&\leq C_S R^{\alpha-\sigma}\quad(R\geq1).
\end{split}
\end{equation}

\textbf{Step 3. }
Set
\[
d_j(x)=\operatorname{dist}(x,\Omega_j^c)
=r_j^{-1}d^{(k_j)}(r_jx),\qquad d_j(x)\leq|x|.
\]
Homogeneity and $L_{k_j}\psi_{k_j}=-\lambda_{1,k_j}\psi_{k_j}$ give
\begin{align*}
\partial_tv_j
&=\left(\frac{u_{k_j}(r_jx,r_j^{\sigma(2-\gamma)}t)}{r_j^\sigma}\right)^\gamma
L_{k_j}v_j-
\left(\frac{u_{k_j}(r_jx,r_j^{\sigma(2-\gamma)}t)}{r_j^\sigma}\right)^\gamma
\frac{a_{k_j}(r_j)\lambda_{1,k_j}r_j^{2\sigma}\psi_{k_j}(r_jx)}
{r_j^{\sigma+\alpha}\theta(r_j)}.
\end{align*}
In $\Omega_j\times(-T_j,0]$,
\[
C_1^\gamma d_j^{\sigma\gamma}
\leq
\left(\frac{u_{k_j}(r_jx,r_j^{\sigma(2-\gamma)}t)}{r_j^\sigma}\right)^\gamma
\leq C_2^\gamma d_j^{\sigma\gamma},
\qquad
\psi_{k_j}(r_jx)\leq Cr_j^\sigma d_j(x)^\sigma,
\]
and the enlarged extremal class therefore gives
\[
\begin{cases}
\displaystyle
\partial_tv_j-d_j^{\sigma\gamma}M^+v_j
\leq C\frac{r_j^{2\sigma-\alpha}}{\theta(r_j)}
d_j^{\sigma(1+\gamma)},\\[6pt]
\displaystyle
\partial_tv_j-d_j^{\sigma\gamma}M^-v_j
\geq-C\frac{r_j^{2\sigma-\alpha}}{\theta(r_j)}
d_j^{\sigma(1+\gamma)}.
\end{cases}
\]
For every fixed $R$,
\[
\sup_{B_R}C\frac{r_j^{2\sigma-\alpha}}{\theta(r_j)}
d_j^{\sigma(1+\gamma)}
\leq C_R\frac{r_j^{2\sigma-\alpha}}{\theta(r_j)}
\longrightarrow0.
\]
The uniform $C^{1,1}$ geometry and $\nu_{k_j}(0)=e_n$ yield
\[
\Omega_j\longrightarrow\R^n_+,\qquad
d_j\longrightarrow(x_n)_+
\quad\text{locally uniformly}.
\]
By Corollary~\ref{cor:compactness} and \eqref{eq:blowup-tail}, after passing to a subsequence, 
$v_j\longrightarrow v$ locally uniformly on $\R^n\times(-\infty,0]$, $v=0$ on $\{x_n\leq0\}\times(-\infty,0]$, $\|v\|_{L^\infty(Q_R^+)}\leq CR^{\sigma+\alpha}$ for $R\geq1$, and
\[
\begin{cases}
\partial_tv-x_n^{\sigma\gamma}M^+v\leq0,\\
\partial_tv-x_n^{\sigma\gamma}M^-v\geq0,
\end{cases}
\quad\text{in }\R^n_+\times(-\infty,0].
\]
Theorem~\ref{liou} gives
\[
v=b(x_n)_+^\sigma\qquad\text{for some }b\in\R.
\]

\textbf{Step 4. }
By Lemma~\ref{firsteigenfunction},
\[
\left\|\frac{\psi_k}{(d^{(k)})^\sigma}\right\|_{C^\alpha(\overline{\Omega^{(k)}})}
\leq C,\qquad
c\leq\left(\frac{\psi_k}{(d^{(k)})^\sigma}\right)(0)\leq C.
\]
After passing to a further subsequence,
\[
\kappa_j:=
\left(\frac{\psi_{k_j}}{(d^{(k_j)})^\sigma}\right)(0)
\longrightarrow\kappa\in[c,C].
\]
For $x\in\Omega_j\cap B_R$,
\begin{align*}
\left|\frac{\psi_{k_j}(r_jx)}{r_j^\sigma}
-\kappa_jd_j(x)^\sigma\right|
&=d_j(x)^\sigma
\left|\frac{\psi_{k_j}(r_jx)}
{d^{(k_j)}(r_jx)^\sigma}-\kappa_j\right|\leq C d_j(x)^\sigma(r_j|x|)^\alpha\leq C_Rr_j^\alpha.
\end{align*}
Both terms vanish in $\Omega_j^c$. Therefore,
\begin{align*}
&\left\|\frac{\psi_{k_j}(r_j\,\cdot)}{r_j^\sigma}
-\kappa(x_n)_+^\sigma\right\|_{L^\infty(B_R)}\leq C_Rr_j^\alpha+C_R|\kappa_j-\kappa|
+C\|d_j-(x_n)_+\|_{L^\infty(B_R)}^\sigma\longrightarrow0.
\end{align*}
Passing to the limit in \eqref{eq:normalized-error},
\[
0=\lim_{j\to\infty}
\int_{Q_1}v_j(x,t)\frac{\psi_{k_j}(r_jx)}{r_j^\sigma}\,dx\,dt
=b\kappa\int_{-1}^0\int_{B_1^+}x_n^{2\sigma}\,dx\,dt.
\]
Thus
\[
b=0,\qquad
\frac12\leq\lim_{j\to\infty}\|v_j\|_{L^\infty(Q_1)}
=\|v\|_{L^\infty(Q_1)}=0,
\]
a contradiction.

The argument for bounded $\theta$ in Step~1 now gives, at every base point,
\begin{align*}
&0\leq a(z,t_0)=\lim_{r\downarrow0}
\frac{\int_{Q_r(z,t_0)}u\psi\,dx\,dt}
{\int_{Q_r(z,t_0)}\psi^2\,dx\,dt}\leq C,\\
&\|u-a(z,t_0)\psi\|_{L^\infty(Q_r(z,t_0))}
\leq Cr^{\sigma+\alpha}\qquad\mbox{for }0<r\leq r_0.
\end{align*}
For $(x,t)\in\overline{Q_{r_0}(z,t_0)}$, set
\[
r=\max\{|x-z|,|t-t_0|^{1/[\sigma(2-\gamma)]}\}.
\]
For $r>0$,
\[
|u(x,t)-a(z,t_0)\psi(x)|
\leq Cr^{\sigma+\alpha}
\leq C\left(|x-z|^{\sigma+\alpha}
+|t-t_0|^{(\sigma+\alpha)/[\sigma(2-\gamma)]}\right).
\]
For $r=0$, both sides vanish. Finally, if $a,\widehat a$ both satisfy \eqref{bdryest} at $(z,t_0)$, then, for $x_h=z+h\nu(z)$, we have $\psi(x_h)\geq ch^\sigma$ and
\begin{align*}
|a-\widehat a|
&\leq\frac{|u(x_h,t_0)-a\psi(x_h)|
+|u(x_h,t_0)-\widehat a\psi(x_h)|}{\psi(x_h)}\leq Ch^\alpha\longrightarrow0.
\end{align*}
Hence $a=\widehat a$.
\end{proof}

\subsection{H\"older regularity of the quotient}
\begin{proof}[Proof of Theorem~\ref{mainthm}]
We combine the boundary expansion in Theorem~\ref{point} with the interior H\"older estimate in Theorem~\ref{thm:interior-estimates} to prove H\"older continuity of $u/\psi$. The desired estimate then follows from the regularity of $\psi/d^\sigma$.

Choose an interior H\"older exponent $\beta$ from Theorem~\ref{thm:interior-estimates}, with the ellipticity bounds used in Steps~2 and~3 below, decreasing it if necessary so that $\beta\in(0,\sigma)$, and fix
\[
0<\alpha<\min\left\{\widetilde\alpha,\frac{2-\gamma}{2}\beta\right\}.
\]
Then
\[
0<\alpha<\beta<\sigma,\qquad
\frac{\alpha}{\sigma(2-\gamma)}<\frac{\beta}{2\sigma}.
\]

\textbf{Step 1. H\"older continuity of the boundary coefficient.}
Let $(z_1,t_1),(z_2,t_2)\in\partial\Omega\times[-T/2,0]$, $t_1\leq t_2$ and 
$h=|z_1-z_2|+|t_1-t_2|^{1/[\sigma(2-\gamma)]}$.

Fix $A>1$ and choose $h_0>0$ so small that
$(A+1)h_0<r_0$ and $Ah_0$ is smaller than the tubular radius
of $\partial\Omega$. For $0<h<h_0$, take
\[
X=z_1+Ah\nu(z_1),
\]
where $\nu(z_1)$ is the inward unit normal.
Then $X\in\Omega$ and $z_1$ is its nearest boundary point, so
\[
d(X)=|X-z_1|=Ah.
\]
Moreover, the definition of $h$ gives $|t_2-t_1|\leq h^{\sigma(2-\gamma)}$ and 
\[
|X-z_2|\leq|X-z_1|+|z_1-z_2|\leq(A+1)h. 
\]
The tubular geometry and $(A+1)h_0<r_0$ give
\[
(X,t_1)\in Q_{r_0}(z_1,t_1)\cap Q_{r_0}(z_2,t_2).
\]
By Theorem~\ref{point}, for $i=1,2$,
\begin{align*}
|u(X,t_1)-a(z_i,t_i)\psi(X)|
&\leq C\left(|X-z_i|^{\sigma+\alpha}
+|t_i-t_1|^{(\sigma+\alpha)/[\sigma(2-\gamma)]}\right)\leq Ch^{\sigma+\alpha},
\end{align*}
and
\[
\psi(X)\geq c\,d(X)^\sigma\geq ch^\sigma.
\]
Thus
\[
|a(z_1,t_1)-a(z_2,t_2)|
\leq
\frac{\sum_{i=1}^2|u(X,t_1)-a(z_i,t_i)\psi(X)|}{\psi(X)}
\leq Ch^\alpha.
\]
For $h\geq h_0$,
\[
|a(z_1,t_1)-a(z_2,t_2)|
\leq2\|a\|_{L^\infty}
\leq2h_0^{-\alpha}\|a\|_{L^\infty}h^\alpha.
\]
For $h=0$, the difference is zero. Hence
\begin{equation}\label{eq:coefficient-holder}
|a(z_1,t_1)-a(z_2,t_2)|\leq Ch^\alpha.
\end{equation}

For $x\in\Omega$ near $\partial\Omega$, let $z_x$ be its nearest boundary point. Then
\[
|x-z_x|=d(x),\qquad
|u(x,t)-a(z_x,t)\psi(x)|\leq Cd(x)^{\sigma+\alpha},
\qquad
\psi(x)\geq cd(x)^\sigma,
\]
and therefore
\begin{equation}\label{eq:quotient-trace}
\left|\frac{u(x,t)}{\psi(x)}-a(z_x,t)\right|\leq Cd(x)^\alpha.
\end{equation}
For $(z,t_0)\in\partial\Omega\times[-T/2,0]$,
\[
|z_x-z|\leq d(x)+|x-z|\leq2|x-z|,
\]
and
\begin{align*}
\left|\frac{u(x,t)}{\psi(x)}-a(z,t_0)\right|
&\leq Cd(x)^\alpha
+C\left(|z_x-z|+|t-t_0|^{1/[\sigma(2-\gamma)]}\right)^\alpha\longrightarrow0
\end{align*}
as $(x,t)\to(z,t_0)$. Define the boundary extension by
\[
\left.\frac{u}{\psi}\right|_{\partial\Omega\times[-T/2,0]}=a.
\]

\textbf{Step 2. An interior estimate for the error.}
Choose $\delta_0\in(0,\min\{1,r_0/4\})$ within the tubular radius. Fix $t\in[-T/2,0]$, $x\in\Omega$, 
$\delta=d(x)<\delta_0$,  $z=z_x$, $a=a(z,t)$,
and set
\[
w(y,\tau)=u(y,\tau)-a\psi(y),\qquad
\widetilde w(y,\tau)=w(x+\delta y,t+\delta^{\sigma(2-\gamma)}\tau).
\]
For $y\in B_{1/2}$ and $\tau\in[-1,0]$,
\[
\frac{\delta}{2}\leq d(x+\delta y)\leq\frac{3\delta}{2},\qquad
t+\delta^{\sigma(2-\gamma)}\tau
\geq-T/2-\delta_0^{\sigma(2-\gamma)}>-T.
\]
Moreover,
\begin{align*}
&\partial_\tau\widetilde w
=\left(\frac{u(x+\delta y,t+\delta^{\sigma(2-\gamma)}\tau)}
{\delta^\sigma}\right)^\gamma L\widetilde w-a\lambda_1\delta^{\sigma(2-\gamma)}
u(x+\delta y,t+\delta^{\sigma(2-\gamma)}\tau)^\gamma
\psi(x+\delta y),\\
&0<c\leq
\left(\frac{u(x+\delta y,t+\delta^{\sigma(2-\gamma)}\tau)}
{\delta^\sigma}\right)^\gamma
\leq C,\\
&\left|a\lambda_1\delta^{\sigma(2-\gamma)}
u(x+\delta y,t+\delta^{\sigma(2-\gamma)}\tau)^\gamma
\psi(x+\delta y)\right|
\leq C\delta^{\sigma(2-\gamma)+\sigma\gamma+\sigma}=C\delta^{3\sigma}.
\end{align*}

For $|y|\leq r_0/(2\delta)$,
\[
|x+\delta y-z|\leq\delta(1+|y|)\leq\delta+r_0/2<r_0.
\]
Theorem~\ref{point} gives, for $-1\leq\tau\leq0$,
\begin{align*}
|\widetilde w(y,\tau)|
&\leq C\left(|x+\delta y-z|^{\sigma+\alpha}
+|\delta^{\sigma(2-\gamma)}\tau|^{(\sigma+\alpha)/[\sigma(2-\gamma)]}\right)\leq C\delta^{\sigma+\alpha}(1+|y|)^{\sigma+\alpha}.
\end{align*}
Also,
\[
\|\widetilde w\|_{L^\infty(\R^n\times[-1,0])}\leq C.
\]
Since $\alpha<\sigma$,
\begin{align*}
&\sup_{\tau\in[-1,0]}
\int_{\R^n}\frac{|\widetilde w(y,\tau)|}{1+|y|^{n+2\sigma}}\,dy\\
&\qquad\leq C\delta^{\sigma+\alpha}
\int_{\R^n}\frac{(1+|y|)^{\sigma+\alpha}}{1+|y|^{n+2\sigma}}\,dy
+C\int_{|y|>r_0/(2\delta)}\frac{dy}{|y|^{n+2\sigma}}\\
&\qquad\leq C\delta^{\sigma+\alpha}+C\delta^{2\sigma}\\
&\qquad\leq C\delta^{\sigma+\alpha}.
\end{align*}
Consequently,
\begin{equation}\label{eq:interior-error-tail}
\sup_{\tau\in[-1,0]}\left[
\|\widetilde w(\cdot,\tau)\|_{L^\infty(B_{1/2})}
+\int_{\R^n}\frac{|\widetilde w(y,\tau)|}{1+|y|^{n+2\sigma}}\,dy
\right]\leq C\delta^{\sigma+\alpha}.
\end{equation}
After division by $\delta^{\sigma+\alpha}$, the forcing is bounded by
\[
C\delta^{3\sigma-(\sigma+\alpha)}
=C\delta^{2\sigma-\alpha}\leq C.
\]
Applying \eqref{eq:interior-holder} in Theorem~\ref{thm:interior-estimates} on a fixed finite covering of interior cylinders, with the coefficient absorbed into the ellipticity constants, and using \eqref{eq:interior-error-tail}, we obtain a fixed $c\in(0,1)$ such that
\begin{equation}\label{eq:interior-error-holder}
\|\widetilde w\|_{C^{\beta,\beta/(2\sigma)}_{x,t}
(\overline B_{1/4}\times[-c,0])}
\leq C\delta^{\sigma+\alpha}.
\end{equation}

\textbf{Step 3. The spatial estimate.}
Let $x,y\in\Omega$ and $t\in[-T/2,0]$, and set $h=|x-y|$.

If $\delta=d(x)<\delta_0$ and $h\leq\delta/8$, then \eqref{eq:interior-error-holder} gives
\[
|w(x,t)-w(y,t)|
\leq C\delta^{\sigma+\alpha}\left(\frac{h}{\delta}\right)^\beta,
\qquad
|w(y,t)|\leq C\delta^{\sigma+\alpha}.
\]
Furthermore, $c\delta^\sigma\leq\psi(x)\leq C\delta^\sigma$, $c\delta^\sigma\leq\psi(y)\leq C\delta^\sigma$ and
\[
|\psi(x)-\psi(y)|
\leq Ch^\sigma
=C\delta^\sigma\left(\frac{h}{\delta}\right)^\sigma
\leq C\delta^\sigma\left(\frac{h}{\delta}\right)^\beta.
\]
Since $u/\psi=a+w/\psi$,
\begin{align*}
\left|\frac{u(x,t)}{\psi(x)}-\frac{u(y,t)}{\psi(y)}\right|
&\leq\frac{|w(x,t)-w(y,t)|}{\psi(x)}
+\frac{|w(y,t)|\,|\psi(x)-\psi(y)|}{\psi(x)\psi(y)}\\
&\leq C\delta^\alpha\left(\frac{h}{\delta}\right)^\beta\\
&=Ch^\alpha\left(\frac{h}{\delta}\right)^{\beta-\alpha}\\
&\leq Ch^\alpha.
\end{align*}

If $\delta=d(x)<\delta_0$ and $h>\delta/8$, then
\[
d(x)\leq8h,\qquad
d(y)\leq d(x)+h\leq9h.
\]
For $h<\delta_0/9$,
\[
|z_x-z_y|\leq d(x)+h+d(y)\leq18h.
\]
Thus \eqref{eq:quotient-trace} and \eqref{eq:coefficient-holder} imply
\begin{align*}
\left|\frac{u(x,t)}{\psi(x)}-\frac{u(y,t)}{\psi(y)}\right|
&\leq
\left|\frac{u(x,t)}{\psi(x)}-a(z_x,t)\right|
+|a(z_x,t)-a(z_y,t)|+\left|a(z_y,t)-\frac{u(y,t)}{\psi(y)}\right|\\
&\leq C\bigl(d(x)^\alpha+|z_x-z_y|^\alpha+d(y)^\alpha\bigr)\\
&\leq Ch^\alpha.
\end{align*}
For $h\geq\delta_0/9$,
\[
\left|\frac{u(x,t)}{\psi(x)}-\frac{u(y,t)}{\psi(y)}\right|
\leq2\left\|\frac{u}{\psi}\right\|_{L^\infty}
\leq Ch^\alpha.
\]

For $d(x)\geq\delta_0$ and $h<\delta_0/8$,
\[
d(y)\geq7\delta_0/8,\qquad
\inf_{\{d\geq\delta_0/2\}}\psi>0.
\]
The H\"older estimate \eqref{eq:interior-holder} in Theorem~\ref{thm:interior-estimates}, applied to $u$ on interior cylinders, together with Lemma~\ref{firsteigenfunction}, gives
\[
\left\|\frac{u}{\psi}\right\|_{C^{\beta,\beta/(2\sigma)}_{x,t}
(\{d\geq\delta_0/2\}\times[-T/2,0])}\leq C,
\]
and hence
\[
\left|\frac{u(x,t)}{\psi(x)}-\frac{u(y,t)}{\psi(y)}\right|
\leq Ch^\beta\leq Ch^\alpha.
\]
For $h\geq\delta_0/8$, use the uniform bound as above. By continuity,
\begin{equation}\label{space}
\left|\frac{u(x,t)}{\psi(x)}-\frac{u(y,t)}{\psi(y)}\right|
\leq C|x-y|^\alpha
\quad\mbox{for }x,y\in\overline\Omega,\ -T/2\leq t\leq0.
\end{equation}

\textbf{Step 4. The temporal estimate.}
Fix $-T/2\leq t_1<t_2\leq0$ and $x\in\Omega$ with $\delta=d(x)<\delta_0$. Use Step~2 with terminal time $t_2$ and $a=a(z_x,t_2)$. Then
\[
u(x,t_2)-u(x,t_1)=w(x,t_2)-w(x,t_1).
\]
If $t_2-t_1\leq c\delta^{\sigma(2-\gamma)}$, \eqref{eq:interior-error-holder} gives
\begin{align*}
\left|\frac{u(x,t_2)-u(x,t_1)}{\psi(x)}\right|
&\leq C\delta^\alpha
\left(\frac{t_2-t_1}{\delta^{\sigma(2-\gamma)}}\right)^{\beta/(2\sigma)}\\
&=C|t_2-t_1|^{\alpha/[\sigma(2-\gamma)]}
\left(\frac{t_2-t_1}{\delta^{\sigma(2-\gamma)}}\right)^{
\beta/(2\sigma)-\alpha/[\sigma(2-\gamma)]}\\
&\leq C|t_2-t_1|^{\alpha/[\sigma(2-\gamma)]}.
\end{align*}
Here
\[
\frac{t_2-t_1}{\delta^{\sigma(2-\gamma)}}\leq c<1,\qquad
\frac{\beta}{2\sigma}-\frac{\alpha}{\sigma(2-\gamma)}>0.
\]
If $t_2-t_1>c\delta^{\sigma(2-\gamma)}$, then
\begin{align*}
\left|\frac{u(x,t_2)}{\psi(x)}-\frac{u(x,t_1)}{\psi(x)}\right|
&\leq
\sum_{i=1}^2\left|\frac{u(x,t_i)}{\psi(x)}-a(z_x,t_i)\right|
+|a(z_x,t_2)-a(z_x,t_1)|\\
&\leq C\delta^\alpha
+C|t_2-t_1|^{\alpha/[\sigma(2-\gamma)]}\\
&\leq C|t_2-t_1|^{\alpha/[\sigma(2-\gamma)]}.
\end{align*}
For $d(x)\geq\delta_0$, the interior estimate in Step~3 gives, when $t_2-t_1\leq1$,
\[
\left|\frac{u(x,t_2)}{\psi(x)}-\frac{u(x,t_1)}{\psi(x)}\right|
\leq C|t_2-t_1|^{\beta/(2\sigma)}
\leq C|t_2-t_1|^{\alpha/[\sigma(2-\gamma)]}.
\]
When $t_2-t_1>1$, the same bound follows from $\|u/\psi\|_{L^\infty}\leq C$. Thus
\begin{equation}\label{time}
\left|\frac{u(x,t_2)}{\psi(x)}-\frac{u(x,t_1)}{\psi(x)}\right|
\leq C|t_2-t_1|^{\alpha/[\sigma(2-\gamma)]}
\quad\mbox{for }x\in\overline\Omega.
\end{equation}

\textbf{Step 5. Returning to the distance quotient.}
For $x,y\in\overline\Omega$ and $s,t\in[-T/2,0]$, \eqref{space} and \eqref{time} give
\begin{align*}
\left|\frac{u(x,t)}{\psi(x)}-\frac{u(y,s)}{\psi(y)}\right|
&\leq
\left|\frac{u(x,t)}{\psi(x)}-\frac{u(y,t)}{\psi(y)}\right|
+\left|\frac{u(y,t)}{\psi(y)}-\frac{u(y,s)}{\psi(y)}\right|\\
&\leq C\left(|x-y|^\alpha+|t-s|^{\alpha/[\sigma(2-\gamma)]}\right).
\end{align*}
Consequently,
\[
\left\|\frac{u}{\psi}\right\|_{C^{\alpha,\alpha/[\sigma(2-\gamma)]}_{x,t}
(\overline\Omega\times[-T/2,0])}\leq C.
\]
Lemma~\ref{firsteigenfunction} gives
\[
\left\|\frac{\psi}{d^\sigma}\right\|_{C^\alpha(\overline\Omega)}\leq C.
\]
Using
\[
\frac{u}{d^\sigma}=\frac{u}{\psi}\frac{\psi}{d^\sigma},
\qquad
\left(\frac{u}{d^\sigma}\right)(z,t)
:=a(z,t)\left(\frac{\psi}{d^\sigma}\right)(z)
\quad\mbox{for }z\in\partial\Omega,
\]
we obtain
\begin{align*}
&\left\|\frac{u}{d^\sigma}\right\|_{C^{\alpha,\alpha/[\sigma(2-\gamma)]}_{x,t}
(\overline\Omega\times[-T/2,0])}\leq C
\left\|\frac{u}{\psi}\right\|_{C^{\alpha,\alpha/[\sigma(2-\gamma)]}_{x,t}
(\overline\Omega\times[-T/2,0])}
\left\|\frac{\psi}{d^\sigma}\right\|_{C^\alpha(\overline\Omega)}\leq C.
\end{align*}
\end{proof}

\subsection{Application to the fractional porous medium equation}
\begin{proof}[Proof of Theorem~\ref{thm:fpme}]
The case $v_0=0$ is immediate. Otherwise, by the regularity at positive times recalled in the introduction, $u=v^m$ is a viscosity solution of \eqref{eq:tfpme}, with $\gamma=1-1/m$ and $L=-m(-\Delta)^\sigma$. Its extension by zero is continuous and satisfies \eqref{eq:tail-class} on compact time intervals away from zero.

Fix $t_0>0$. For $a\geq t_0/2$, set
\[
U(x,\tau)=a^{1/\gamma}u(x,a(\tau+2)),\qquad -1\leq\tau\leq0.
\]
By \eqref{eq:tfpme} and \eqref{eq:dual-global-harnack},
\[
U_\tau=U^\gamma LU\quad\text{in }\Omega\times(-1,0],\qquad
cd^\sigma\leq U(\cdot,-1)\leq Cd^\sigma,
\]
with $c,C>0$ independent of $a$. Theorem~\ref{mainthm} yields
\[
\left\|\frac{U}{d^\sigma}\right\|_{C_{x,t}^{\alpha,\beta}
(\overline\Omega\times[-1/2,0])}\leq C,\qquad
\beta=\frac{\alpha}{\sigma(2-\gamma)}
=\frac{\alpha m}{\sigma(m+1)}.
\]
Rescaling and taking $a=2t/3$, for $t\geq t_0$ we obtain
\begin{align*}
\left\|\frac{u(\cdot,t)}{d^\sigma}\right\|_{C^\alpha(\overline\Omega)}
&\leq Ct^{-1/\gamma},\\
\left[\frac{u}{d^\sigma}\right]_{C_t^\beta
(\overline\Omega\times[t,4t/3])}
&\leq Ct^{-1/\gamma-\beta}.
\end{align*}
For $t'>4t/3$,
\[
t'-t>t_0/3,\qquad
\sup_{x\in\Omega}
\frac{|u(x,t')-u(x,t)|}{d(x)^\sigma|t'-t|^\beta}
\leq 2Ct_0^{-1/\gamma}(t_0/3)^{-\beta}.
\]
The triangle inequality then gives the H\"older estimate for $u/d^{\sigma}$, uniformly in $t_1>t_0$. The asserted estimate for $v/d^{\sigma/m}$ then follows from the preceding estimate and \eqref{eq:dual-global-harnack}.
\end{proof}
\appendix
\section{Proofs of the comparison principles}\label{app:comparison}
We prove the comparison principles for exterior values satisfying \eqref{eq:tail-class}. Continuity is required only on the closure of the equation domain.

\subsection{The nonlocal comparison at a doubled maximum}\label{appa1}
Let $D$ be a bounded open set, and let $u,v$ be continuous on $\overline D\times[-T,0]$ and satisfy \eqref{eq:tail-class}. Suppose
\[
\begin{split}
m&:=\max_{\overline D\times[-T,0]}(u-v)>0,\\
u&\leq v\quad\text{on }(\overline D\times\{-T\})\cup(\partial D\times[-T,0]),\\
u(\cdot,t)&\leq v(\cdot,t)\quad\text{a.e. in }D^c,\  t\in[-T,0].
\end{split}
\]
In particular,
\[
\bigl\{(x,t)\in\overline D\times[-T,0]:(u-v)(x,t)=m\bigr\}
\subset D\times(-T,0].
\]

For $\epsilon,\tau>0$, define
\[
m_{\epsilon,\tau}:=
\max_{\substack{x,y\in\overline D\\t,s\in[-T,0]}}
\left[u(x,t)-v(y,s)-\frac{|x-y|^2}{2\epsilon}
-\frac{|t-s|^2}{2\tau}\right].
\]
Choose a maximizing point $(x_\epsilon,t_\tau,y_\epsilon,s_\tau)$, so that
\[
m_{\epsilon,\tau}
=u(x_\epsilon,t_\tau)-v(y_\epsilon,s_\tau)
-\frac{|x_\epsilon-y_\epsilon|^2}{2\epsilon}
-\frac{|t_\tau-s_\tau|^2}{2\tau}.
\]
All four coordinates depend on both $\epsilon$ and $\tau$; the subscripts are used for brevity. By compactness and the doubling lemma, after passing to a subsequence as $(\epsilon,\tau)\to(0,0)$, we have
\begin{equation}\label{eq:double-limits}
\begin{split}
(x_\epsilon,t_\tau),(y_\epsilon,s_\tau)&\longrightarrow(x_0,t_0)\in D\times(-T,0],\\
\frac{|x_\epsilon-y_\epsilon|^2}{\epsilon}+\frac{|t_\tau-s_\tau|^2}{\tau}&\longrightarrow0,\\
m_{\epsilon,\tau}&\longrightarrow m=(u-v)(x_0,t_0).
\end{split}
\end{equation}
See Lemma~3.1 of \cite{CIL1992}. Define
\begin{align*}
\varphi_u(x,t)&=v(y_\epsilon,s_\tau)
+\frac{|x-y_\epsilon|^2}{2\epsilon}
+\frac{|t-s_\tau|^2}{2\tau}+m_{\epsilon,\tau},\\
\varphi_v(y,s)&=u(x_\epsilon,t_\tau)
-\frac{|x_\epsilon-y|^2}{2\epsilon}
-\frac{|t_\tau-s|^2}{2\tau}-m_{\epsilon,\tau}.
\end{align*}
Then
\begin{align*}
\varphi_u&\geq u,\qquad \varphi_v\leq v \quad\text{on }\overline D\times[-T,0],\\
\varphi_u(x_\epsilon,t_\tau)&=u(x_\epsilon,t_\tau), \qquad \varphi_v(y_\epsilon,s_\tau)=v(y_\epsilon,s_\tau),\\
\partial_t\varphi_u(x_\epsilon,t_\tau)&=\partial_s\varphi_v(y_\epsilon,s_\tau)=\frac{t_\tau-s_\tau}{\tau}.
\end{align*}
Both test functions belong to $C^\infty(\R^n\times\R)$, so these identities also apply when $t_\tau=0$ or $s_\tau=0$, with touching from the past.

Let $c\in C_{\mathrm{loc}}^{0,1}(D)$, with $c>0$ in $D$, and consider
\[
\int_{\R^n}\delta f(x,c(x)z)
\frac{\mu(z/|z|)}{|z|^{n+2\sigma}}\,dz.
\]
The case $c\equiv1$ is included. Write
\[
c_x=c(x_\epsilon),\qquad c_y=c(y_\epsilon),
\qquad c_x,c_y\longrightarrow c(x_0)>0.
\]
Fix $h>0$ such that, for all sufficiently small parameters along the chosen subsequence,
\[
\overline{B_{2c_xh}(x_\epsilon)}
\cup\overline{B_{2c_yh}(y_\epsilon)}\subset D,
\qquad t_\tau,s_\tau>-T.
\]
Set
\[
\ell=\frac12\min\{t_\tau+T,s_\tau+T\}>0.
\]
For each $0<\rho<h$, the patched functions $U_\rho=u_{\varphi_u}$ and $V_\rho=v_{\varphi_v}$ in Definition~\ref{defn:vis} are given explicitly by
\[
U_\rho(x,t):=
\begin{cases}
\varphi_u(x,t), & (x,t)\in B_{c_x\rho}(x_\epsilon)\times(t_\tau-\ell,t_\tau+\ell),\\
u(x,t), & (x,t)\in \R^n\times[-T,0]\setminus(B_{c_x\rho}(x_\epsilon)\times(t_\tau-\ell,t_\tau+\ell)),
\end{cases}
\]
and
\[
V_\rho(y,s):=
\begin{cases}
\varphi_v(y,s), & (y,s)\in B_{c_y\rho}(y_\epsilon)\times(s_\tau-\ell,s_\tau+\ell),\\
v(y,s), & (y,s)\in \R^n\times[-T,0]\setminus(B_{c_y\rho}(y_\epsilon)\times(s_\tau-\ell,s_\tau+\ell)).
\end{cases}
\]
The spatial patch radii are $c_x\rho$ and $c_y\rho$, so both patches depend on $\rho$. All the estimates below are uniform over the angular densities defining $\mathcal L_*$. Following the nonlocal framework in~\cite{BarlesImbert2008}, we split the integral into small, intermediate,
and large jumps.

\textbf{Small jumps.} For $|z|<\rho$,
\[
\delta U_\rho(x_\epsilon,c_xz,t_\tau)
-\delta V_\rho(y_\epsilon,c_yz,s_\tau)
=\frac{c_x^2+c_y^2}{2\epsilon}|z|^2.
\]
Consequently,
\[
\begin{aligned}
&\int_{|z|<\rho}
\bigl[\delta U_\rho(x_\epsilon,c_xz,t_\tau)
-\delta V_\rho(y_\epsilon,c_yz,s_\tau)\bigr]
\frac{\mu(z/|z|)}{|z|^{n+2\sigma}}\,dz\leq\frac{C}{\epsilon}\int_0^\rho r^{1-2\sigma}\,dr
\leq\frac{C\rho^{2-2\sigma}}{\epsilon}.
\end{aligned}
\]

\textbf{Intermediate jumps.} For $\rho\leq|z|<h$, the points
$x_\epsilon\pm c_xz$ and $y_\epsilon\pm c_yz$ belong to $D$. Maximality gives, for each choice of sign,
\[
\begin{aligned}
&u(x_\epsilon\pm c_xz,t_\tau)-v(y_\epsilon\pm c_yz,s_\tau)
-u(x_\epsilon,t_\tau)+v(y_\epsilon,s_\tau)\\
&\qquad\leq
\frac{|x_\epsilon-y_\epsilon\pm(c_x-c_y)z|^2
-|x_\epsilon-y_\epsilon|^2}{2\epsilon}.
\end{aligned}
\]
Averaging the two inequalities yields
\[
\delta u(x_\epsilon,c_xz,t_\tau)
-\delta v(y_\epsilon,c_yz,s_\tau)
\leq\frac{|c_x-c_y|^2|z|^2}{2\epsilon}.
\]
Since $|c_x-c_y|\leq C|x_\epsilon-y_\epsilon|$, we obtain
\[
\begin{aligned}
\int_{\rho\leq|z|<h}
\bigl[\delta u(x_\epsilon,c_xz,t_\tau)
-\delta v(y_\epsilon,c_yz,s_\tau)\bigr]
\frac{\mu(z/|z|)}{|z|^{n+2\sigma}}\,dz
&\leq C h^{2-2\sigma}\frac{|c_x-c_y|^2}{\epsilon}\\
& \leq C h^{2-2\sigma}\frac{|x_\epsilon-y_\epsilon|^2}{\epsilon}.
\end{aligned}
\]
For $c\equiv1$, the first upper bound is zero.

\textbf{Large jumps.} Fix $h$ as above. The weighted time continuity gives
\[
\begin{aligned}
&\int_{|z|\geq h}
\frac{|u(x_\epsilon\pm c_xz,t_\tau)
-u(x_\epsilon\pm c_xz,t_0)|}{|z|^{n+2\sigma}}\,dz\\
&\qquad\leq C_h
\int_{\R^n}\frac{|u(y,t_\tau)-u(y,t_0)|}{1+|y|^{n+2\sigma}}\,dy
\longrightarrow0.
\end{aligned}
\]
For the fixed function $u(\cdot,t_0)$, approximation in weighted $L^1$ by smooth compactly supported functions gives
\[
\int_{|z|\geq h}
\frac{|u(x_\epsilon\pm c_xz,t_0)
-u(x_0\pm c(x_0)z,t_0)|}{|z|^{n+2\sigma}}\,dz
\longrightarrow0.
\]
The constants in these changes of variables are uniform, since
$x_\epsilon\to x_0$ and $c_x\to c(x_0)>0$. Combining these estimates and applying the same argument to $v$, we obtain
\[
\begin{aligned}
&\int_{|z|\geq h}
\frac{|u(x_\epsilon\pm c_xz,t_\tau)
-u(x_0\pm c(x_0)z,t_0)|}{|z|^{n+2\sigma}}\,dz\longrightarrow0,\\
&\int_{|z|\geq h}
\frac{|v(y_\epsilon\pm c_yz,s_\tau)
-v(x_0\pm c(x_0)z,t_0)|}{|z|^{n+2\sigma}}\,dz\longrightarrow0.
\end{aligned}
\]
Also,
\[
u(x_\epsilon,t_\tau)\longrightarrow u(x_0,t_0),
\qquad
v(y_\epsilon,s_\tau)\longrightarrow v(x_0,t_0).
\]
Since $|\mu|\leq\Lambda$, these convergences imply
\[
\sup_\mu\Bigg|\int_{|z|\geq h}\Bigl[\delta u(x_\epsilon,c_xz,t_\tau)-\delta v(y_\epsilon,c_yz,s_\tau)-\delta(u-v)(x_0,c(x_0)z,t_0)\Bigr]\frac{\mu(z/|z|)}{|z|^{n+2\sigma}}\,dz\Bigg|\longrightarrow0.
\]

Maximality and the exterior ordering give
\[
(u-v)(y,t_0)\leq m\quad\text{for a.e. }y\in\R^n,
\qquad (u-v)(x_0,t_0)=m.
\]
Hence
\[
\begin{aligned}
&\delta(u-v)(x_0,c(x_0)z,t_0)=\frac{(u-v)(x_0+c(x_0)z,t_0)+(u-v)(x_0-c(x_0)z,t_0)}2-m
\leq0
\end{aligned}
\]
for almost every $z$. Therefore,
\[
\limsup_{(\epsilon,\tau)\to(0,0)}
\sup_\mu\int_{|z|\geq h}
\bigl[\delta u(x_\epsilon,c_xz,t_\tau)
-\delta v(y_\epsilon,c_yz,s_\tau)\bigr]
\frac{\mu(z/|z|)}{|z|^{n+2\sigma}}\,dz\leq0.
\]

Combining the estimates on these three regions gives, along the subsequence in \eqref{eq:double-limits},
\begin{equation}\label{eq:jump-comparison}
\begin{aligned}
\limsup_{(\epsilon,\tau)\to(0,0)}
\limsup_{\rho\downarrow0}\sup_\mu
\Bigg[&
\int_{\R^n}\delta U_\rho(x_\epsilon,c_xz,t_\tau)
\frac{\mu(z/|z|)}{|z|^{n+2\sigma}}\,dz\\
&-
\int_{\R^n}\delta V_\rho(y_\epsilon,c_yz,s_\tau)
\frac{\mu(z/|z|)}{|z|^{n+2\sigma}}\,dz
\Bigg]\leq0.
\end{aligned}
\end{equation}

\subsection{Conditional comparison}
\begin{proof}[Proof of Proposition~\ref{conditional}]
For $\eta>0$, set
\[
v^\eta(x,t)=v(x,t)+\eta(t+T+1).
\]
Since the perturbation is constant in space, it leaves the nonlocal operator unchanged. If $\varphi$ touches $v^\eta$ from below, then $\varphi-\eta(t+T+1)$ touches $v$ from below on the same cylinder. For the corresponding patches, the assumption $Lv\leq0$ gives
\[
L(v^\eta)_\varphi=L v_{\varphi-\eta(t+T+1)}\leq0.
\]
Since $v^\eta\geq v\geq0$, we obtain
\[
\partial_t\varphi
\geq v^\gamma L(v^\eta)_\varphi+\eta
\geq (v^\eta)^\gamma L(v^\eta)_\varphi+\eta.
\]
Thus $v^\eta$ is a strict supersolution, including at the terminal time. The initial and exterior inequalities become strict.

Suppose $u-v^\eta$ has a positive maximum. Apply the doubling construction in Section \ref{appa1} to $u,v^\eta$. For sufficiently small $\epsilon,\tau$, set
\[
p=u(x_\epsilon,t_\tau)>q=v^\eta(y_\epsilon,s_\tau)\geq0.
\]
Let $\varphi_v$ be the lower test for $v^\eta$ at $(y_\epsilon,s_\tau)$. Then
\[
\widetilde{\varphi}_v(y,s):=\varphi_v(y,s)-\eta(s+T+1)
\]
touches $v$ from below at the same point. For the corresponding patch $v_{\widetilde{\varphi}_v,\rho}$, we have
\[
\begin{split}
V_\rho(\cdot,s_\tau)&=v_{\widetilde{\varphi}_v,\rho}(\cdot,s_\tau)+\eta(s_\tau+T+1),\\
LV_\rho(y_\epsilon,s_\tau)&=L v_{\widetilde{\varphi}_v,\rho}(y_\epsilon,s_\tau)\leq0.
\end{split}
\]
The equality uses the fact that $L$ annihilates spatial constants, and the inequality follows from $Lv(\cdot,s_\tau)\leq0$ in the viscosity sense.
The viscosity inequalities and equality of the time derivatives yield
\begin{align*}
\eta&\leq p^\gamma LU_\rho-q^\gamma LV_\rho\\
&=p^\gamma(LU_\rho-LV_\rho)+(p^\gamma-q^\gamma)LV_\rho\\
&\leq p^\gamma(LU_\rho-LV_\rho).
\end{align*}
The last inequality uses $p>q\geq0$ and $LV_\rho\leq0$. The numbers $p$ are uniformly bounded because the contact points lie in the compact closure of the equation domain. Estimate~\eqref{eq:jump-comparison}, with $c\equiv1$, gives a contradiction. Hence $u\leq v^\eta$, and letting $\eta\downarrow0$ proves the result.
\end{proof}

We next justify the alternative sign condition in Remark~\ref{remcomparison}. We give the evaluation argument explicitly because upper touching and the elliptic inequality $Lu\leq0$ have opposite test directions.

\begin{proof}[Proof of Remark~\ref{remcomparison}]
Write
\[
K(z)=\frac{\mu(z/|z|)}{|z|^{n+2\sigma}},\qquad
K(z)=K(-z),\qquad
\frac{\lambda}{|z|^{n+2\sigma}}\leq K(z)
\leq\frac{\Lambda}{|z|^{n+2\sigma}}.
\]

\textbf{Step 1. Evaluation at a positive upper contact.}
Suppose $\varphi$ touches $u$ from above at $(x_0,t_0)\in\Omega_T$, with $u(x_0,t_0)>0$. As in Definition~\ref{defn:vis}, choose $r_0,\tau_0>0$ such that $\overline{B_{2r_0}(x_0)}\subset\Omega$, $t_0-\tau_0>-T$, 
\[
\varphi\geq u\quad\text{on }B_{r_0}(x_0)\times(t_0-\tau_0,t_0],
\]
and
\[
\varphi(x_0,t_0)=u(x_0,t_0),\qquad
|\delta\varphi(x_0,z,t_0)|\leq C|z|^2
\quad\text{for }|z|<r_0.
\]
For $0<r<r_0$, let $u_{\varphi,r}$ be the patch in Definition~\ref{defn:vis}, using the cylinder $B_r(x_0)\times(t_0-\tau_0,t_0+\tau_0)$. In particular,
\[
u_{\varphi,r}(x,t_0)=
\begin{cases}
\varphi(x,t_0),&x\in B_r(x_0),\\
u(x,t_0),&x\notin B_r(x_0).
\end{cases}
\]
The parabolic subsolution inequality gives
\[
\begin{aligned}
u(x_0,t_0)^{-\gamma}\varphi_t(x_0,t_0)
&\leq Lu_{\varphi,r}(x_0,t_0)\\
&=\int_{|z|<r}\delta\varphi(x_0,z,t_0)K(z)\,dz
+\int_{|z|\geq r}\delta u(x_0,z,t_0)K(z)\,dz.
\end{aligned}
\]
The elliptic assumption $Lu\leq0$ is tested from below, so it does not imply $Lu_{\varphi,r}(x_0,t_0)\leq0$ for this upper patch. We first show that the integral defining $Lu(x_0,t_0)$ converges absolutely.

Since $\varphi\geq u$ near the contact point,
\[
\delta u(x_0,z,t_0)\leq\delta\varphi(x_0,z,t_0)
\leq C|z|^2\qquad (|z|<r_0).
\]
Moreover, \eqref{eq:tail-class} implies
\[
\begin{aligned}
&\int_{|z|\geq r_0}|\delta u(x_0,z,t_0)|K(z)\,dz\leq C_{r_0,x_0}\left(
u(x_0,t_0)+\int_{\R^n}\frac{|u(y,t_0)|}{1+|y|^{n+2\sigma}}\,dy
\right)<\infty.
\end{aligned}
\]
Consequently,
\[
\int_{\R^n}\bigl(\delta u(x_0,z,t_0)\bigr)^+K(z)\,dz<\infty.
\]
For the negative part, the preceding lower bound on $Lu_{\varphi,r}$ yields
\[
\begin{aligned}
&\int_{|z|\geq r}\bigl(\delta u(x_0,z,t_0)\bigr)^-K(z)\,dz\\
&=\int_{|z|\geq r}\bigl(\delta u(x_0,z,t_0)\bigr)^+K(z)\,dz+\int_{|z|<r}\delta\varphi(x_0,z,t_0)K(z)\,dz
-Lu_{\varphi,r}(x_0,t_0)\\
&\leq\int_{\R^n}\bigl(\delta u(x_0,z,t_0)\bigr)^+K(z)\,dz
+Cr^{2-2\sigma}-u(x_0,t_0)^{-\gamma}\varphi_t(x_0,t_0).
\end{aligned}
\]
The right-hand side is bounded independently of $0<r<r_0$. Monotone convergence as $r\downarrow0$ therefore gives integrability of the negative part. Thus $Lu(x_0,t_0)$ is well-defined, and
\[
\begin{aligned}
0&\leq Lu_{\varphi,r}(x_0,t_0)-Lu(x_0,t_0)\\
&=\int_{|z|<r}
\bigl[\delta\varphi(x_0,z,t_0)-\delta u(x_0,z,t_0)\bigr]K(z)\,dz\\
&\rightarrow 0 \quad\mbox{as }r\downarrow0.
\end{aligned}
\]
Passing to the limit in the parabolic inequality gives
\begin{equation}\label{eq:upper-evaluation}
\begin{gathered}
\int_{\R^n}|\delta u(x_0,z,t_0)|K(z)\,dz<\infty,\qquad
\varphi_t(x_0,t_0)\leq u(x_0,t_0)^\gamma Lu(x_0,t_0).
\end{gathered}
\end{equation}

It remains to prove $Lu(x_0,t_0)\leq0$ for this classically defined integral. Set $f=u(\cdot,t_0)$, and choose
\[
\rho\in C_c^\infty(B_1),\qquad \rho\geq0,\qquad
\rho(z)=\rho(-z),\qquad \int_{\R^n}\rho=1.
\]
For $0<\ell<r_0$, write
\[
\rho_\ell(z)=\ell^{-n}\rho(z/\ell),\qquad
f_\ell=f*\rho_\ell.
\]
We first justify
\[
Lf_\ell(x_0)\leq0.
\]
To apply the addition and stability results for bounded functions without assuming that $f$ is globally bounded, set
\[
f_N=\min\{f,N\},\qquad N>\max_{\overline\Omega}f.
\]
Then $f_N=f$ in $\overline\Omega$ and $f_N\leq f$ in $\R^n$. If $\psi$ touches $f_N$ from below at $x\in\Omega$, it also touches $f$ from below there, and the corresponding spatial patches satisfy
\[
(f_N)_\psi\leq f_\psi,\qquad
(f_N)_\psi(x)=f_\psi(x),\qquad
L(f_N)_\psi(x)\leq Lf_\psi(x)\leq0.
\]
Hence $Lf_N\leq0$ in $\Omega$ in the viscosity sense. By translation invariance,
\[
L\bigl(f_N(\cdot-y)\bigr)\leq0
\quad\text{in }B_{r_0}(x_0),\qquad |y|<\ell.
\]
Theorem~5.9 of \cite{luis}, applied to the linear operator $L$ and the singleton class $\{L\}$, implies that finite nonnegative linear combinations preserve this inequality. In particular,
\[
\begin{gathered}
a_j\geq0,\quad \sum_{j=1}^J a_j=1,\quad |y_j|<\ell
\quad \Longrightarrow\quad
L\left(\sum_{j=1}^J a_j f_N(\cdot-y_j)\right)\leq0
\quad\text{in }B_{r_0}(x_0).
\end{gathered}
\]
Approximate $f_N*\rho_\ell$ by these finite averages, locally uniformly in $B_{r_0}(x_0)$ and in the weighted $L^1$ norm. Such an approximation follows from local uniform continuity of $f_N$ in $\Omega$ and continuity of translations in weighted $L^1$. After passing to a subsequence, the convergence also holds almost everywhere in $\R^n$. All these averages are bounded between $0$ and $N$, so Lemma~4.5 of \cite{luis} gives
\[
L(f_N*\rho_\ell)\leq0\quad\text{in }B_{r_0}(x_0).
\]
Since $f_N*\rho_\ell$ is smooth, this inequality holds classically. For fixed $\ell$, we have
\[
f_N*\rho_\ell=f_\ell\quad\text{in }B_{r_0}(x_0),
\]
and
\[
\begin{aligned}
&\int_{\R^n}\frac{|(f_N*\rho_\ell)(y)-f_\ell(y)|}{1+|y|^{n+2\sigma}}\,dy\leq C_\ell\int_{\R^n}\frac{|f_N(y)-f(y)|}{1+|y|^{n+2\sigma}}\,dy
\longrightarrow0\qquad \mbox{as }N\to\infty.
\end{aligned}
\]
Since the two functions agree near $x_0$, the difference of their nonlocal integrals has no singularity there. Therefore,
\[
\begin{aligned}
&\bigl|L(f_N*\rho_\ell)(x_0)-Lf_\ell(x_0)\bigr|\leq C_{r_0,x_0}\int_{\R^n}
\frac{|(f_N*\rho_\ell)(y)-f_\ell(y)|}{1+|y|^{n+2\sigma}}\,dy
\longrightarrow0 \quad \mbox{as }N\to\infty,
\end{aligned}
\]
which proves $Lf_\ell(x_0)\leq0$.

We now let $\ell\downarrow0$. By symmetry and Fubini's theorem,
\[
L\rho_\ell(z)=L\rho_\ell(-z),\qquad
\int_{\R^n}L\rho_\ell(z)\,dz=0,
\]
and hence
\[
\begin{aligned}
Lf_\ell(x_0)
&=\int_{\R^n}f(x_0-z)L\rho_\ell(z)\,dz\\
&=\int_{\R^n}
\left(\frac{f(x_0+z)+f(x_0-z)}2-f(x_0)\right)L\rho_\ell(z)\,dz\\
&=\int_{\R^n}\delta f(x_0,z)L\rho_\ell(z)\,dz.
\end{aligned}
\]
Taylor's formula for $\rho$ and the kernel bound outside its support give
\[
|L\rho(\xi)|\leq C(1+|\xi|)^{-n-2\sigma}.
\]
By homogeneity,
\[
\begin{aligned}
|L\rho_\ell(z)|
&=\ell^{-n-2\sigma}|L\rho(z/\ell)|\leq\frac{C}{(\ell+|z|)^{n+2\sigma}}
\leq\frac{C}{|z|^{n+2\sigma}},\qquad z\ne0.
\end{aligned}
\]
For $|z|>\ell$, the support condition on $\rho_\ell$ gives
\[
L\rho_\ell(z)=\int_{B_\ell}\rho_\ell(y)K(z-y)\,dy
\longrightarrow K(z)
\]
at every Lebesgue point $z\ne0$ of $K$, and hence almost everywhere. Finally,
\[
|\delta f(x_0,z)L\rho_\ell(z)|
\leq\frac{C}{\lambda}|\delta f(x_0,z)|K(z)\in L^1(\R^n)
\]
by \eqref{eq:upper-evaluation}. Dominated convergence yields
\[
Lu(x_0,t_0)
=\int_{\R^n}\delta f(x_0,z)K(z)\,dz
=\lim_{\ell\downarrow0}Lf_\ell(x_0)\leq0.
\]

\textbf{Step 2. Strict perturbation and comparison.}
For $\eta>0$, set
\[
w(x,t)=u(x,t)-\eta(t+T+1).
\]
The function $w$ is continuous on $\overline\Omega\times[-T,0]$ and satisfies \eqref{eq:tail-class}. The initial and exterior inequalities give
\[
\begin{split}
w-v&\leq-\eta\quad\text{on }
(\overline\Omega\times\{-T\})\cup(\partial\Omega\times[-T,0]),\\
w(\cdot,t)-v(\cdot,t)&\leq-\eta
\quad\text{a.e. in }\Omega^c,\qquad t\in[-T,0].
\end{split}
\]
Suppose, to the contrary, that
\[
m:=\max_{\overline\Omega\times[-T,0]}(w-v)>0.
\]
Apply the doubling construction in Section~\ref{appa1} to $w,v$, with $c\equiv1$. Denote the upper and lower tests by $\varphi_w$ and $\varphi_v$, and their patches by $W_\rho$ and $V_\rho$. For all sufficiently small $\epsilon,\tau$, the contacts lie in $\Omega\times(-T,0]$ and
\[
\begin{gathered}
p:=w(x_\epsilon,t_\tau)>q:=v(y_\epsilon,s_\tau)\geq0,\\
\partial_t\varphi_w(x_\epsilon,t_\tau)
=\partial_s\varphi_v(y_\epsilon,s_\tau)
=\frac{t_\tau-s_\tau}{\tau}.
\end{gathered}
\]
The function
\[
\psi(x,t)=\varphi_w(x,t)+\eta(t+T+1)
\]
touches $u$ from above at $(x_\epsilon,t_\tau)$, where
\[
u(x_\epsilon,t_\tau)=p+\eta(t_\tau+T+1)>p>0.
\]
Step~1 therefore gives a well-defined number
\[
\ell:=Lu(x_\epsilon,t_\tau)=Lw(x_\epsilon,t_\tau)\leq0,
\]
and
\[
\frac{t_\tau-s_\tau}{\tau}+\eta
\leq\bigl[p+\eta(t_\tau+T+1)\bigr]^\gamma\ell
\leq p^\gamma\ell.
\]
Here $Lw=Lu$ because the perturbation is spatially constant. At the lower contact of $v$, the supersolution inequality gives
\[
\frac{t_\tau-s_\tau}{\tau}\geq q^\gamma LV_\rho(y_\epsilon,s_\tau).
\]
Also, since $W_\rho$ is an upper patch of $w$,
\[
\begin{aligned}
&LW_\rho(x_\epsilon,t_\tau)-\ell=\int_{|z|<\rho}
\bigl[\delta\varphi_w(x_\epsilon,z,t_\tau)
-\delta w(x_\epsilon,z,t_\tau)\bigr]K(z)\,dz
\geq0.
\end{aligned}
\]
Combining these inequalities, we obtain
\begin{align*}
\eta
&\leq p^\gamma\ell-q^\gamma LV_\rho(y_\epsilon,s_\tau)\\
&=q^\gamma\bigl[\ell-LV_\rho(y_\epsilon,s_\tau)\bigr]
+(p^\gamma-q^\gamma)\ell\\
&\leq q^\gamma\bigl[\ell-LV_\rho(y_\epsilon,s_\tau)\bigr]\\
&\leq q^\gamma\bigl[LW_\rho(x_\epsilon,t_\tau)-LV_\rho(y_\epsilon,s_\tau)\bigr],
\end{align*}
where we used $p>q\geq0$, $\ell\leq0$ and $LW_\rho\geq\ell$.

Since $q$ is nonnegative and uniformly bounded, letting $\rho\downarrow0$ and then $(\epsilon,\tau)\to(0,0)$ along the chosen subsequence, and using \eqref{eq:jump-comparison}, we obtain $\eta\leq0$, a contradiction.

Thus $w\leq v$ on $\overline\Omega\times[-T,0]$, and
\[
u(x,t)-v(x,t)\leq\eta(t+T+1)\leq\eta(T+1).
\]
Letting $\eta\downarrow0$ proves the remark.
\end{proof}

\subsection{Comparison for the weighted extremal equations}
\begin{proof}[Proof of Proposition~\ref{com for weight}]
Subtracting $F(t+T)$ from both functions reduces the proof to the case $F=0$. We first treat $M^-$. Set $v^\eta=v+\eta(t+T+1)$. It satisfies
\[
\partial_tv^\eta-a(x)M^-v^\eta\geq\eta.
\]
Suppose $u-v^\eta$ has a positive maximum and apply the doubling construction. The change of variables $y=a(x)^{1/(2\sigma)}z$ gives
\[
a(x)Lf(x)=\int_{\R^n}\delta f\bigl(x,a(x)^{1/(2\sigma)}z\bigr)
\frac{\mu(z/|z|)}{|z|^{n+2\sigma}}\,dz.
\]
This uses the homogeneity of the stable kernel; the angular density is unchanged by a positive scalar dilation. Consequently $a(x)M^-f$ is the infimum of these integrals.

Apply the viscosity inequalities with the patches of radius $a(x_\epsilon)^{1/(2\sigma)}\rho$ and $a(y_\epsilon)^{1/(2\sigma)}\rho$. Equality of the time derivatives implies
\[
\eta\leq\inf_\mu A_\mu-\inf_\mu B_\mu
\leq\sup_\mu(A_\mu-B_\mu),
\]
where $A_\mu,B_\mu$ are the two patched integrals. The locally Lipschitz function $c=a^{1/(2\sigma)}$ satisfies the hypotheses of \eqref{eq:jump-comparison}. This estimate contradicts the preceding positive lower bound. Thus $u\leq v^\eta$, and then $u\leq v$.

For $M^+$, apply the result just proved to $-v,-u$, using $M^-(-f)=-M^+f$. All continuity, integrability, initial, and exterior assumptions are preserved. This proves the proposition for both extremal operators.
\end{proof}

\bigskip
\bigskip

\noindent M. Huo

\noindent Department of Mathematics, The Hong Kong University of Science and Technology\\
Clear Water Bay, Kowloon, Hong Kong\\[1mm]
Email:  \textsf{mhuoaa@connect.ust.hk}

\bigskip

\noindent T. Jin

\noindent Department of Mathematics, The Hong Kong University of Science and Technology\\
Clear Water Bay, Kowloon, Hong Kong\\
Email: \textsf{tianlingjin@ust.hk}

\bigskip

\noindent J. Xiong

\noindent School of Mathematical Sciences, Laboratory of Mathematics and Complex Systems, MOE\\ Beijing Normal University, 
Beijing 100875, China\\
Email: \textsf{jx@bnu.edu.cn}

\medskip

\end{document}